%% file: main.tex
\documentclass[11pt,a4paper]{article}
\usepackage[T1]{fontenc}
\usepackage[utf8]{inputenc}
\usepackage[margin=1in]{geometry}
\usepackage{amsmath,amssymb,amsfonts,amsthm,mathtools,mathrsfs}
\usepackage{enumitem,graphicx,booktabs}
\usepackage{placeins}
\usepackage[numbers,sort&compress]{natbib}
\usepackage{authblk}
\usepackage{microtype}
\usepackage[hidelinks]{hyperref}
\providecommand{\doi}[1]{\url{https://doi.org/#1}}
\hypersetup{
  pdftitle={Continuous-Time Wasserstein Distributionally Robust Control: Limits, Duality, and Stability},
  pdfauthor={Minjung Gim, Hee Jun Yang, and Yeoneung Kim}
}
\graphicspath{{./}}
\theoremstyle{plain}
\newtheorem{theorem}{Theorem}[section]
\newtheorem{lemma}[theorem]{Lemma}
\newtheorem{proposition}[theorem]{Proposition}
\newtheorem{corollary}[theorem]{Corollary}
\theoremstyle{definition}
\newtheorem{assumption}[theorem]{Assumption}
\newtheorem{example}[theorem]{Example}
\newtheorem{remark}[theorem]{Remark}
\newcommand{\R}{\mathbb{R}}
\newcommand{\N}{\mathbb{N}}

\newcommand{\dd}{\,\mathrm{d}}

\newcommand{\Acal}{\mathcal{A}}
\newcommand{\Bcal}{\mathcal{B}}
\newcommand{\Dcal}{\mathcal{D}}
\newcommand{\Pcal}{\mathcal{P}}

\newcommand{\Gammac}{\Gamma_{\mathrm c}}
\newcommand{\Gammaa}{\Gamma_{\mathrm a}}

\newcommand{\dH}{d_{\mathrm H}}
\newcommand{\BUC}{\mathrm{BUC}}

\setlist{topsep=3pt,itemsep=2pt}
\title{Continuous-Time Wasserstein Distributionally Robust Control:\\
Limits, Duality, and Stability}
\author[1]{Minjung Gim\thanks{Minjung Gim and Hee Jun Yang contributed equally to this work.}}
\author[1]{Hee Jun Yang\textsuperscript{\ensuremath{*}}}
\author[2]{Yeoneung Kim\thanks{Corresponding author: \texttt{yeoneung@yonsei.ac.kr}.}}
\affil[1]{National Institute for Mathematical Sciences, Daejeon 34047, Republic of Korea;
\texttt{minjunggim@gmail.com}, \texttt{yangheejun1009@gmail.com}}
\affil[2]{Department of Industrial Engineering, College of Engineering,
Yonsei University, Seoul 03722, Republic of Korea}
\date{}

\begin{document}
\maketitle

\begin{abstract}
A sampled robust control problem averages future values over disturbance
laws, while its continuous-time counterpart evolves with an averaged drift.
We establish a fluid limit connecting these two models for Wasserstein
ambiguity sets. Under a uniform disturbance moment bound and subcritical
growth, sampled Bellman values converge locally uniformly to the upper and
lower values of a discounted differential game. The argument accommodates
unbounded disturbances without requiring square-integrable increments.
We also derive an explicit bound on the change in either game value when
both the radius and center of the ambiguity set vary. For Euclidean
disturbance space, uniform Lipschitz continuity in the disturbance, common state
and cost Lipschitz bounds, and a discount rate exceeding the drift's state Lipschitz constant, this estimate converts
distributional confidence bounds into uniform value approximation bounds.
Static Wasserstein duality reformulates pointwise distributional optimization;
affine disturbance dependence permits an exact reduction to a ball of means.
Experiments illustrate the common limit and decreasing value error with
sample size. For a nonlinear disturbance model, uniform shifts overestimate
worst-case recovery speed by about 36\%. In a two-product inventory model,
increasing ambiguity raises production near balance while reducing the
region of full production in deeper backlogs.
\end{abstract}
\noindent\textbf{Keywords:} Distributionally robust control; Wasserstein distance;
differential games; viscosity solutions; numerical approximation.\par
\smallskip
\noindent\textbf{Mathematics Subject Classification (2020):}
49L25; 90C17; 49N70; 65N12.\par

\section{Introduction}

Sampled distributionally robust control averages future values over the
disturbance law; the continuous-time model averages the drift over the disturbance law. These operations generally differ at a fixed step. We study
convergence as the step vanishes and sensitivity of the limiting values to
errors in the disturbance law.

With Wasserstein ambiguity \citep{YangCSL,YangDRO}, we scale state increments
with the time step and allow a new adversarial law at each time. This
rectangular specification \citep{Iyengar} leads to a discounted differential
game with measure-valued actions, averaged drift, and values determined by
the players' information pattern.

The paper has two main results. \emph{The sampled-to-relaxed limit}
(Theorem~\ref{thm:scheme-sampled}) establishes locally uniform convergence
for both orders of optimization. The growth conditions \(q_f,q_L<r\) allow
unbounded disturbances with a uniform \(r\)-moment bound. The proof uses
truncation to control large jumps and a common modulus of continuity to
obtain compactness; square-integrable increments are not required.
\emph{Joint radius and center stability} (Theorem~\ref{thm:sensitivity})
bounds the change in either limiting value by an explicit constant times
\(|\theta-\theta'|+W_r(\nu,\nu')\). The estimate holds on Euclidean
disturbance space under uniform Lipschitz continuity in the disturbance, common
state and cost Lipschitz bounds, and a discount rate exceeding the drift's state Lipschitz constant. It yields a
value approximation bound when the ambiguity radius is calibrated by a
distributional confidence bound (Corollary~\ref{cor:statistical}).

We identify the limits through the classical Elliott--Kalton dynamic programming principle (DPP) and
viscosity characterization \citep{ElliottKalton,ElliottKalton1974,
EvansSouganidis,BardiCapuzzo}. The measure-valued actions are relaxed
controls \citep{Warga,KrasovskiiSubbotin}; logarithmic localization handles
linear drift growth in the comparison argument. Sion's theorem
\citep{Sion} gives a sufficient Isaacs condition.
Static Wasserstein duality \citep{EsfahaniKuhn,BlanchetMurthy,GaoKleywegt}
then makes the Hamiltonian's distributional optimization accessible to
computation. For affine disturbance dependence it reduces exactly to a
ball of possible means. The value perturbation estimate complements the
static sensitivity analysis of Bartl et al.~\citep{BartlEtAl}.

Other continuous-time formulations use disturbance-energy penalties
\citep{BasarBernhard}, drift ambiguity \citep{ChenEpstein}, or volatility
ambiguity \citep{PengBook}. Jiang \citep{Jiang2026} studies causal DRO
with adapted Wasserstein penalties and a path-dependent Hamilton--Jacobi--Bellman
characterization. Our ambiguity sets constrain the law pointwise in time,
and fluid scaling produces a first-order equation on a finite-dimensional
state space. The convergence argument follows the monotonicity, stability,
and consistency approach of Barles and Souganidis~\citep{BarlesSouganidis}. A persistent diffusion
term requires a different scaling, as in stochastic differential games
\citep{FlemingSouganidis,FlemingSoner}.

Experiments show a common sampled and relaxed limit and decreasing value
errors with sample size. Nonlinear disturbance dependence makes transport
allocation consequential: uniform shifts overestimate worst-case recovery
speed by about 36\%. In the inventory model, increasing ambiguity raises
production near balance but reduces the full-production region in deeper
backlogs.

Sections~\ref{sec:setup}--\ref{sec:hji} establish the game and value equations;
Sections~\ref{sec:scheme}--\ref{sec:certificate} give the limit, duality, and
stability results; Section~\ref{sec:experiments} presents experiments.
All proofs are included with their results. Section~\ref{sec:discussion}
discusses modeling comparisons, and Appendix~\ref{app:experiments} supplies
additional experiments and numerical procedures.

\section{Model and Standing Assumptions}\label{sec:setup}

\subsection{Wasserstein Ambiguity Sets}\label{subsec:ambiguity}

Let the state space be \(\R^d\), the controller action space be
\(U\subset \R^m\), and the disturbance space be a closed set
\(W\subset \R^\ell\). Fix \(r\ge1\). Let \(\Pcal(W)\) denote the Borel
probability measures on \(W\) and
\(\Pcal_r(W):=\{\mu\in\Pcal(W):\int|w|^r\mu(\dd w)<\infty\}\). For
\(\mu,\nu\in\Pcal_r(W)\), the \(r\)-Wasserstein distance is
\begin{equation}\label{eq:wr}
  W_r(\mu,\nu)
  :=
  \left(
  \inf_{\kappa\in\Pi(\mu,\nu)}
  \int_{W\times W}|w-w'|^r\,\kappa(\dd w,\dd w')
  \right)^{1/r},
\end{equation}
where \(\Pi(\mu,\nu)\) denotes the set of transport plans with first marginal
\(\mu\) and second marginal \(\nu\); an optimal plan always exists
\citep[Thm.~4.1]{Villani}. Given a nominal law \(\nu\in\Pcal_r(W)\) and a radius \(\theta\ge0\), the
Wasserstein ambiguity set is
\begin{equation*}
  \Dcal_\theta(\nu)
  :=
  \{\mu\in\Pcal_r(W): W_r(\mu,\nu)\le \theta\}.
\end{equation*}
In the data-driven regime that motivates this paper, \(\nu\) is the empirical
distribution
\begin{equation}\label{eq:empirical}
  \nu_N:=\frac1N\sum_{i=1}^N\delta_{\widehat w_i}
\end{equation}
of \(N\) independent samples of an unknown true disturbance law \(\mu_*\),
and the radius \(\theta\) is calibrated by measure concentration so that
\(\mu_*\in\Dcal_\theta(\nu_N)\) with prescribed confidence
(Section~\ref{sec:certificate}). \subsection{Relaxed Dynamics}\label{subsec:relaxed}

The controlled dynamics are specified by a vector field
\(f:\R^d\times U\times W\to\R^d\). For a probability measure \(\mu\) on
\(W\), define the \emph{relaxed drift}
\begin{equation}\label{eq:relaxed-drift}
  F(x,u,\mu):=\int_W f(x,u,w)\,\mu(\dd w)
\end{equation}
whenever the integral converges. At time \(s\ge0\), the adversary selects
a law from \(\Dcal\subset\Pcal_r(W)\), usually a Wasserstein ball
\(\Dcal_\theta(\nu)\), and the state evolves with the corresponding drift
\eqref{eq:relaxed-drift}. Linearity in \(\mu\) and convexity of the ball
will allow us to apply Sion's theorem in Proposition~\ref{prop:isaacs}.

The admissible controls and adversarial paths are
\begin{align*}
  \Acal&:=\{a:[0,\infty)\to U: a \text{ is Lebesgue measurable}\},\\
  \Bcal&:=\{b:[0,\infty)\to\Dcal: b \text{ is Lebesgue measurable}\}.
\end{align*}
Here, measurability is understood with respect to the Lebesgue
\(\sigma\)-algebra on \([0,\infty)\) and the Borel \(\sigma\)-algebras on
\(U\) and \(\Dcal\), respectively, where \(\Dcal\) carries the topology of
weak convergence of probability measures. Controls and adversarial paths
are identified up to Lebesgue-a.e.
equivalence. For \(a\in\Acal\), \(b\in\Bcal\), and \(x\in\R^d\), the relaxed
state equation is
\begin{equation}\label{eq:state-ode}
  \dot X_s = F(X_s,a(s),b(s)),
  \qquad X_0=x,
\end{equation}
or, equivalently, in integral form,
\begin{equation*}
  X_t=x+\int_0^tF(X_s,a(s),b(s))\,\dd s.
\end{equation*}
Well-posedness of \eqref{eq:state-ode} under the standing assumptions is
proved in Section~\ref{sec:wellposed}; the unique solution is denoted by
\(X^{x,a,b}\).

The pointwise constraint on \(b\) is rectangular: paths can be concatenated
without exhausting a cumulative budget. Section~\ref{sec:discussion}, compares this specification with budget
constraints and examines how relaxation changes the information pattern.

\subsection{Cost, Information Pattern, and Elliott--Kalton Values}
\label{subsec:values}

For \(a\in\Acal\) and \(b\in\Bcal\), the infinite-horizon discounted cost is
\begin{equation}\label{eq:cost}
  J(x;a,b):=\int_0^\infty e^{-\rho s}c(X_s^{x,a,b},a(s))\,\dd s,
\end{equation}
where the running cost \(c:\R^d\times U\to[0,M_c]\) and the discount rate
\(\rho>0\) are as in Assumption~\ref{ass:standing} below. By boundedness of
\(c\),
\begin{equation}\label{eq:cost-bound}
  0\le J(x;a,b)\le \frac{M_c}{\rho}.
\end{equation}
The controller minimizes cost and the adversary maximizes it. We specify
the information pattern through Elliott--Kalton nonanticipative strategies
\citep{EvansSouganidis}.

A controller nonanticipative strategy is a map \(\alpha:\Bcal\to\Acal\) such
that, for every \(b_1,b_2\in\Bcal\) and \(t\ge0\),
\begin{equation*}
  b_1=b_2\text{ a.e. on }[0,t]
  \quad\Longrightarrow\quad
  \alpha[b_1]=\alpha[b_2]\text{ a.e. on }[0,t].
\end{equation*}

Here \(b_1=b_2\) a.e. means that there is one Lebesgue-null time set outside
which \(b_1(s)\) and \(b_2(s)\) agree as Borel probability measures on \(W\).
Equivalently, their distance in any metric for the weak topology is zero
a.e.; the diagonal in \(\Dcal\times\Dcal\) is Borel. Thus the condition is
measurable and independent of the representatives of the paths.
Let \(\Gammac\) denote the collection of such strategies. Similarly, an
adversary nonanticipative strategy is a map \(\beta:\Acal\to\Bcal\) such that
\begin{equation*}
  a_1=a_2\text{ a.e. on }[0,t]
  \quad\Longrightarrow\quad
  \beta[a_1]=\beta[a_2]\text{ a.e. on }[0,t],
\end{equation*}
and the set of such strategies is denoted by \(\Gammaa\). Like controls and
adversarial paths, strategies are identified up to Lebesgue-a.e. equivalence
of their outputs.

The lower and upper Elliott--Kalton values are
\begin{align*}
  V^-(x)&:=\inf_{\alpha\in\Gammac}\sup_{b\in\Bcal}J(x;\alpha[b],b),
  \\
  V^+(x)&:=\sup_{\beta\in\Gammaa}\inf_{a\in\Acal}J(x;a,\beta[a]).
\end{align*}
Both values lie in \([0,M_c/\rho]\). In \(V^-\), the controller responds to
the adversary's path; in \(V^+\), the roles reverse. The corresponding
Hamiltonian orders are explained in Remark~\ref{rem:information}.

\begin{remark}[Ordering and Information]\label{rem:no-ordering}
The values optimize over different strategy--path pairs, so the elementary
minimax inequality does not order them. Their ordering follows from the
Hamilton--Jacobi comparison principle (Proposition~\ref{prop:ordering}).
\end{remark}

\subsection{Standing Assumptions}\label{subsec:assumptions}

The following assumptions are maintained unless explicitly stated otherwise.

\begin{assumption}[Standing hypotheses]\label{ass:standing}
The following conditions hold.
\begin{enumerate}[label=(A\arabic*)]
  \item \(U\subset \R^m\) is compact and nonempty. The ambiguity set
  \(\Dcal\subset\Pcal_r(W)\) is nonempty, compact and metrizable for the
  topology of weak convergence of probability measures, and has uniformly bounded
  \(r\)-th moments,
  \(m_r(\Dcal):=\sup_{\mu\in\Dcal}\int_W|w|^r\,\mu(\dd w)<\infty\).
  \item The relaxed drift
  \(F:\R^d\times U\times\Dcal\to\R^d\) is continuous, where \(\Dcal\) carries
  the weak topology of (A1). There exist constants \(L_F,C_F>0\) such that, for
  all \(x,y\in\R^d\), \(u\in U\), and \(\mu\in\Dcal\),
  \begin{equation*}
    |F(x,u,\mu)-F(y,u,\mu)|\le L_F|x-y|,
    \qquad
    |F(x,u,\mu)|\le C_F(1+|x|).
  \end{equation*}
  \item The running cost \(c:\R^d\times U\to[0,M_c]\) is bounded and
  continuous on \(\R^d\times U\), and uniformly continuous in \(x\)
  uniformly in \(u\in U\); \(\omega_c\) denotes a (bounded, nondecreasing)
  modulus of continuity of \(c\) in the state variable, uniform in
  \(u\in U\). \item The discount rate satisfies \(\rho>0\).
\end{enumerate}
\end{assumption}

\subsection{Wasserstein Balls and Induced Drifts}
\label{sec:setting}

The next lemmas verify (A1) for Wasserstein balls and (A2) under growth
conditions on \(f\).

\begin{lemma}\label{lem:ball-compact}
Let \(W\subset\R^\ell\) be closed, \(\nu\in\Pcal_r(W)\), and \(\theta\ge0\).
Then \(\Dcal_\theta(\nu)\) is nonempty, convex, and compact and metrizable for
the topology of weak convergence, with
\begin{equation}\label{eq:uniform-moment}
  m_r(\Dcal_\theta(\nu))^{1/r}
  \le \theta+\Bigl(\int_W|w|^r\,\nu(\dd w)\Bigr)^{1/r}.
\end{equation}
Moreover, on \(\Dcal_\theta(\nu)\) the weak topology coincides with the
\(W_q\)-topology for every \(q\in[1,r)\).
\end{lemma}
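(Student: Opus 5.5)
Nonemptiness is immediate because \(\nu\in\Dcal_\theta(\nu)\). For convexity, take \(\mu_0,\mu_1\in\Dcal_\theta(\nu)\) with optimal plans \(\kappa_0\in\Pi(\mu_0,\nu)\) and \(\kappa_1\in\Pi(\mu_1,\nu)\), which exist by \citep[Thm.~4.1]{Villani}. For \(\lambda\in[0,1]\) the mixture \((1-\lambda)\kappa_0+\lambda\kappa_1\) lies in \(\Pi((1-\lambda)\mu_0+\lambda\mu_1,\nu)\). Its cost is \((1-\lambda)W_r(\mu_0,\nu)^r+\lambda W_r(\mu_1,\nu)^r\le\theta^r\), so \(W_r^r(\cdot,\nu)\) is convex along mixtures and the ball is convex.

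The moment bound \eqref{eq:uniform-moment} follows from the triangle inequality for \(W_r\): \(m_r(\mu)^{1/r}=W_r(\mu,\delta_0)\le W_r(\mu,\nu)+W_r(\nu,\delta_0)\). If \(0\notin W\), work on \(\R^\ell\), where the same identity holds. Alternatively, apply Minkowski's inequality in \(L^r(\kappa)\) to \(|w|\le|w-w'|+|w'|\) under an optimal plan \(\kappa\). Taking the supremum over \(\mu\in\Dcal_\theta(\nu)\) gives the bound.

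For compactness, start from the uniform \(r\)-moment bound. By Markov's inequality it gives tightness, so Prokhorov's theorem makes the ball relatively compact in \(\Pcal(W)\); here \(W\) is closed, hence Polish, and weak limits stay supported in \(W\). For closedness, let \(\mu_n\to\mu\) weakly with \(W_r(\mu_n,\nu)\le\theta\), and pick optimal plans \(\kappa_n\). These plans have tight marginals, so they are tight, and a subsequence converges weakly to some \(\kappa\in\Pi(\mu,\nu)\). The cost \(|w-w'|^r\) is continuous and nonnegative, so by lower semicontinuity (the portmanteau theorem for lsc functions bounded below) \(\int|w-w'|^r\dd\kappa\le\liminf\int|w-w'|^r\dd\kappa_n\le\theta^r\). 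Likewise Fatou's lemma under weak convergence shows \(\int|w|^r\dd\mu<\infty\), so \(\mu\in\Pcal_r(W)\) and \(\mu\in\Dcal_\theta(\nu)\). Metrizability follows because \(\Pcal(W)\) with the weak topology is itself metrizable, for instance by the Lévy--Prokhorov metric, and subspaces of metrizable spaces are metrizable.

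The step I expect to need the most care is the identification of topologies for \(q\in[1,r)\). I would use the standard characterization that \(W_q\)-convergence is equivalent to weak convergence together with convergence of \(q\)-th moments, or equivalently uniform integrability of \(|w|^q\) \citep[Thm.~6.9 and Def.~6.8]{Villani}. On the ball, uniform integrability holds because \(\int_{|w|>R}|w|^q\dd\mu\le R^{q-r}m_r(\Dcal_\theta(\nu))\to0\) uniformly in \(\mu\) as \(R\to\infty\). Hence a sequence in the ball converges weakly if and only if it converges in \(W_q\). Both topologies are metrizable, so agreement of convergent sequences gives agreement of the topologies. An alternative is to note that the identity map from the weakly compact ball to the Hausdorff \(W_q\)-topology is continuous, hence a homeomorphism.
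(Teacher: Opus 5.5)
Your proposal is correct and follows essentially the same route as the paper. It gets convexity from mixtures of optimal plans and the moment bound from the triangle inequality in \(L^r(\kappa)\); your Minkowski alternative is exactly the paper's argument. Compactness comes from Markov's inequality, Prokhorov's theorem and weak lower semicontinuity of \(W_r\), and the topologies are identified through uniform integrability of \(|w|^q\) and the characterization of \(W_q\)-convergence.

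The differences are minor. You prove the lower semicontinuity of \(W_r\) directly, by extracting a weakly convergent subsequence of optimal plans, where the paper cites it. You also spell out the step from agreement of convergent sequences to agreement of the topologies, which the paper leaves implicit.
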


\begin{proof}
\emph{Convexity.} Let \(\mu_0,\mu_1\in\Dcal_\theta(\nu)\), \(t\in[0,1]\), and
let \(\kappa_i\in\Pi(\mu_i,\nu)\) be optimal plans. Then
\(t\kappa_0+(1-t)\kappa_1\in\Pi(t\mu_0+(1-t)\mu_1,\nu)\) and the transport cost
is linear in the plan, whence
\(W_r^r(t\mu_0+(1-t)\mu_1,\nu)\le tW_r^r(\mu_0,\nu)+(1-t)W_r^r(\mu_1,\nu)
\le\theta^r\).

\emph{Moment bound.} For \(\mu\in\Dcal_\theta(\nu)\) and an optimal
\(\kappa\in\Pi(\mu,\nu)\), the triangle inequality in \(L^r(\kappa)\) gives
\((\int|w|^r\dd\mu)^{1/r}\le(\int|w-w'|^r\dd\kappa)^{1/r}
+(\int|w'|^r\dd\nu)^{1/r}\), which is \eqref{eq:uniform-moment}.

\emph{Compactness.} By \eqref{eq:uniform-moment} and Markov's inequality,
\(\mu(\{|w|>K\})\le m_r/K^r\) uniformly on \(\Dcal_\theta(\nu)\), so the set is
tight. Prokhorov's theorem gives relative compactness in the weak topology,
which is metrizable because \(W\) is Polish. If \(\mu_n\in\Dcal_\theta(\nu)\) and
\(\mu_n\rightharpoonup\mu\) weakly, then \(\mu\) is supported in the closed set
\(W\), \(\int|w|^r\dd\mu\le\liminf_n\int|w|^r\dd\mu_n\le m_r\) by the
portmanteau theorem, and
\(W_r(\mu,\nu)\le\liminf_nW_r(\mu_n,\nu)\le\theta\) by the lower semicontinuity
of \(W_r\) with respect to weak convergence \citep[Rem.~6.12]{Villani}. Hence
\(\Dcal_\theta(\nu)\) is weakly closed, and therefore compact.

\emph{Topologies.} For \(q\in[1,r)\), \(W_q\)-convergence is equivalent to weak
convergence plus convergence of \(q\)-th moments \citep[Thm.~6.9]{Villani}. On
\(\Dcal_\theta(\nu)\), the uniform \(r\)-moment bound makes \(|w|^q\) uniformly
integrable, so weak convergence already implies convergence of \(q\)-th
moments. The two topologies therefore coincide on the ball.
\end{proof}

\begin{lemma}\label{lem:F-regular}
Let \(\Dcal\) satisfy (A1), let \(m:=m_r(\Dcal)\), and let
\(f:\R^d\times U\times W\to\R^d\) be continuous with, for some
\(q_f,q_L\in[0,r)\),
\begin{align}
  |f(x,u,w)-f(y,u,w)|&\le L_f(1+|w|^{q_L})|x-y|,\label{eq:f-lip}\\
  |f(x,u,w)|&\le C_f(1+|x|)(1+|w|^{q_f}).\label{eq:f-growth}
\end{align}
Then \(F(x,u,\mu):=\int_Wf(x,u,w)\,\mu(\dd w)\) satisfies (A2), with
\(L_F=L_f(1+m^{q_L/r})\) and \(C_F=C_f(1+m^{q_f/r})\).
\end{lemma}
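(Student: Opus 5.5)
We first check that \(F\) is well defined, then prove the two inequalities of (A2), and finally prove joint continuity on \(\R^d\times U\times\Dcal\). Throughout, the basic tool is Jensen's inequality for the concave map \(t\mapsto t^{q/r}\) when \(q<r\). It gives
\[
\int_W|w|^{q}\,\mu(\dd w)\le\Bigl(\int_W|w|^r\,\mu(\dd w)\Bigr)^{q/r}\le m^{q/r}\qquad(\mu\in\Dcal,\ q\in[0,r)).
\]

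\emph{Well-definedness and the bounds.} By \eqref{eq:f-growth}, \(|f(x,u,\cdot)|\) is dominated by \(C_f(1+|x|)(1+|w|^{q_f})\), which is \(\mu\)-integrable by the Jensen estimate. So the Bochner integral defining \(F\) exists and satisfies \(|F(x,u,\mu)|\le C_f(1+|x|)(1+m^{q_f/r})\). Similarly, integrating \eqref{eq:f-lip} against \(\mu\) gives \(|F(x,u,\mu)-F(y,u,\mu)|\le L_f(1+m^{q_L/r})|x-y|\). These are exactly the two inequalities of (A2) with the stated constants.

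\emph{Continuity.} Take \((x_n,u_n,\mu_n)\to(x,u,\mu)\) with \(\mu_n\rightharpoonup\mu\) weakly in \(\Dcal\). We split
\[
F(x_n,u_n,\mu_n)-F(x,u,\mu)=\int\bigl(f(x_n,u_n,w)-f(x,u,w)\bigr)\mu_n(\dd w)+\Bigl(\int f(x,u,w)\,\mu_n(\dd w)-\int f(x,u,w)\,\mu(\dd w)\Bigr).
\]

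\emph{First term.} Fix \(K>0\) and split the integral over \(\{|w|\le K\}\) and \(\{|w|>K\}\). On the compact set \(\{|x'|\le|x|+1\}\times U\times(W\cap\{|w|\le K\})\), \(f\) is uniformly continuous, so the integrand there tends to zero uniformly in \(w\), and that part vanishes as \(n\to\infty\). On \(\{|w|>K\}\), the growth bound controls the integrand by \(2C_f(2+|x|)(1+|w|^{q_f})\). Since \(|w|^{q_f}\le K^{q_f-r}|w|^r\) there, the tail is at most \(C(1+K^{q_f-r}m)\,\mu_n(|w|>K)\), and Markov gives \(\mu_n(|w|>K)\le m/K^r\). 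This tail tends to zero as \(K\to\infty\), uniformly in \(n\), precisely because \(q_f<r\).

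\emph{Second term.} The function \(g=f(x,u,\cdot)\) is continuous on \(W\) but unbounded, so weak convergence does not apply directly. This is the main obstacle, and uniform integrability resolves it. Let \(\chi_K\) be a continuous cutoff equal to \(1\) on \(\{|w|\le K\}\) and \(0\) on \(\{|w|\ge K+1\}\). Then \(g\chi_K\) is bounded and continuous, so \(\int g\chi_K\,\dd\mu_n\to\int g\chi_K\,\dd\mu\) by weak convergence. The remainder \(\int g(1-\chi_K)\) is bounded by the same tail estimate, uniformly over \(\Dcal\). The limit \(\mu\) is also covered, since \(\mu\in\Dcal\) because \(\Dcal\) is compact.

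Letting \(n\to\infty\) and then \(K\to\infty\) shows that both terms tend to zero. Sequential continuity suffices because \(\Dcal\) is metrizable by (A1).
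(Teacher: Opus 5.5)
Your argument matches the paper's proof step for step: Jensen's inequality for the two constants, the same two-term split, uniform continuity of \(f\) on a compact set for \(|w|\le K\), a Markov tail bound, and bounded continuous truncations (removed by the same tail bound) for the weak-convergence term.

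One slip needs fixing. The tail estimate \(C(1+K^{q_f-r}m)\,\mu_n(|w|>K)\) is false as written. Integrating \(|w|^{q_f}\le K^{q_f-r}|w|^r\) over \(\{|w|>K\}\) gives \(K^{q_f-r}m\), not \(K^{q_f-r}m\,\mu_n(|w|>K)\). The correct bound, as in the paper, is
\[
\int_{\{|w|>K\}}(1+|w|^{q_f})\,\mu_n(\dd w)\le \frac{m}{K^r}+\frac{m}{K^{r-q_f}}.
\]
Your version would tend to zero even when \(q_f=r\), which contradicts Remark~\ref{rem:strict}. The corrected bound is where \(q_f<r\) is actually used.
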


\begin{proof}
The Lipschitz and growth bounds follow from \eqref{eq:f-lip},
\eqref{eq:f-growth}, and Jensen's inequality,
\(\int|w|^{q}\dd\mu\le(\int|w|^r\dd\mu)^{q/r}\le m^{q/r}\) for \(q\le r\).

To prove joint continuity, take \((x_n,u_n,\mu_n)\to(x,u,\mu)\) in
\(\R^d\times U\times\Dcal\), with \(|x_n|\le R\) for all \(n\), and split
\[
\begin{aligned}
  |F(x_n,u_n,\mu_n)-F(x,u,\mu)|
  &\le\int_W|f(x_n,u_n,w)-f(x,u,w)|\,\mu_n(\dd w)\\
  &\quad+\Bigl|\int_Wf(x,u,w)\,(\mu_n-\mu)(\dd w)\Bigr|.
\end{aligned}
\]
For the first term, fix \(K>0\) and split at \(|w|\le K\). On
\(\{|w|\le K\}\), \(f\) is uniformly continuous on the compact set
\(\bar B_R\times U\times(W\cap\bar B_K)\), so the integrand converges to \(0\)
uniformly. On the tail, \eqref{eq:f-growth} and Markov's inequality give, uniformly in
\(n\),
\[
  \int_{\{|w|>K\}}\!2C_f(1+R)(1+|w|^{q_f})\,\mu_n(\dd w)
  \le 2C_f(1+R)\Bigl(\frac{m}{K^r}+\frac{m}{K^{r-q_f}}\Bigr)
  \xrightarrow[K\to\infty]{}0,
\]
since \(q_f<r\). For the second term, weak convergence applies to bounded
continuous truncations of \(f(x,u,\cdot)\). The same tail estimate allows
the truncation to be removed, so this term also tends to zero.
\end{proof}

\begin{remark}[Strictness of \(q_f<r\)]\label{rem:strict}
The strict inequality \(q_f<r\) in Lemma~\ref{lem:F-regular} cannot in general
be relaxed to \(q_f=r\) when \(W\) is unbounded: mass of size \(1/n\) placed at
distance \(n\theta\) stays in the \(W_1\)-ball but destroys weak continuity of
\(\mu\mapsto\int f\dd\mu\) for \(f\) of linear growth. Consequently, for the
important affine case \(f(x,u,w)=f_0(x,u)+G(x,u)w\) (so \(q_f=1\)), one should
take \(r>1\), work with a compact disturbance set \(W\), or use the
finite-dimensional reduction of Corollary~\ref{cor:mean-set}, which is valid
for every \(r\ge1\).
\end{remark}

Extensions to disturbance-dependent costs are described in Remark~\ref{rem:cost-mu}.

\subsection{State Equations}\label{sec:wellposed}

For measurable actions, the Carath\'eodory ODE \eqref{eq:state-ode} has the
following flow estimates for dynamic programming and comparison.

\begin{proposition}\label{prop:ode}
Under Assumption~\ref{ass:standing}, for every \(x\in\R^d\),
\(a\in\Acal\), and \(b\in\Bcal\), equation \eqref{eq:state-ode} admits a unique
global absolutely continuous solution \(X^{x,a,b}\). Moreover, for all
\(t\ge0\),
\begin{equation}\label{eq:state-growth}
  |X_t^{x,a,b}|\le (1+|x|)e^{C_Ft}-1,
\end{equation}
and, for all \(x,y\in\R^d\),
\begin{equation}\label{eq:state-lip-initial}
  |X_t^{x,a,b}-X_t^{y,a,b}|
  \le e^{L_Ft}|x-y|.
\end{equation}
\end{proposition}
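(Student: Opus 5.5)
The plan is to treat \eqref{eq:state-ode} as a Carath\'eodory ODE with vector field \(g(s,x):=F(x,a(s),b(s))\). First I will check the Carath\'eodory conditions. For fixed \(x\), the map \(s\mapsto g(s,x)\) is Lebesgue measurable: \(s\mapsto(a(s),b(s))\) is measurable into \(U\times\Dcal\), since each component is measurable and \(U\times\Dcal\) is a separable metrizable space, so its Borel \(\sigma\)-algebra is the product one. Composing with the continuous map \(F(x,\cdot,\cdot)\) from (A2) preserves measurability. For fixed \(s\), the map \(x\mapsto g(s,x)\) is \(L_F\)-Lipschitz, uniformly in \(s\), by (A2). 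The linear growth bound \(|g(s,x)|\le C_F(1+|x|)\) gives local integrability on bounded sets. The standard Carath\'eodory existence theorem, for instance via Picard iteration in \(C([0,T];\R^d)\) with a weighted sup norm \(\sup_t e^{-2L_Ft}|X_t|\), then gives a unique absolutely continuous solution on every \([0,T]\). Uniqueness on overlapping intervals lets these solutions patch together into a global one. Alternatively, local existence follows from the Lipschitz bound, and global existence from the a priori bound below.

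For the growth estimate \eqref{eq:state-growth}, I set \(\phi(t):=1+|X_t|\). From the integral form,
\[
  \phi(t)\le 1+|x|+\int_0^t|F(X_s,a(s),b(s))|\dd s\le \phi(0)+C_F\int_0^t\phi(s)\dd s .
\]
Gronwall's inequality then gives \(\phi(t)\le(1+|x|)e^{C_Ft}\), which rearranges to \eqref{eq:state-growth}. This bound also rules out blow-up, so it completes the global existence argument.

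For \eqref{eq:state-lip-initial}, I take \(\delta(t):=|X_t^{x,a,b}-X_t^{y,a,b}|\). Both trajectories use the same \(a(s)\) and \(b(s)\), so the Lipschitz bound in (A2) gives
\[
  \delta(t)\le|x-y|+L_F\int_0^t\delta(s)\dd s,
\]
and Gronwall yields \(\delta(t)\le e^{L_Ft}|x-y|\). The same inequality with \(x=y\) gives uniqueness.

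The only real subtlety is the measurability step. Because \(b\) takes values in a space of measures, one must use that \(\Dcal\) is compact and metrizable, hence separable, to identify Borel sets of the product \(U\times\Dcal\) with the product \(\sigma\)-algebra. Without this, joint measurability of \(s\mapsto F(x,a(s),b(s))\) would not follow from separate measurability of \(a\) and \(b\). Once that is settled, everything else is the classical Gronwall argument.
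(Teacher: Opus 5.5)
Your proposal is correct and follows the paper's proof. Carath\'eodory existence and uniqueness come from time-measurability together with the uniform Lipschitz and linear-growth bounds in (A2); Gronwall applied to \(1+|X_t|\) then gives \eqref{eq:state-growth} and rules out blow-up, and Gronwall applied to \(|X^{x,a,b}_t-X^{y,a,b}_t|\) gives \eqref{eq:state-lip-initial}. Your justification of joint measurability, using separability of \(U\times\Dcal\) so that the Borel and product \(\sigma\)-algebras coincide, spells out the step the paper compresses into ``measurable by composition.''
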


\begin{proof}
For fixed \(z\), the map \(s\mapsto F(z,a(s),b(s))\) is measurable by
composition. It is globally Lipschitz in \(z\) and has linear growth, so
the Carath\'eodory theorem gives a unique maximal solution. The estimate
\[
  |X_t|\le |x|+\int_0^t C_F(1+|X_s|)\,\dd s
\]
and Gronwall's inequality give \eqref{eq:state-growth}, which rules out
finite-time blow-up. Applying Gronwall once more to
\[
  |X_t^{x,a,b}-X_t^{y,a,b}|
  \le |x-y|+L_F\int_0^t|X_s^{x,a,b}-X_s^{y,a,b}|\,\dd s
\]
gives \eqref{eq:state-lip-initial}.
\end{proof}

For later use we record a short-time estimate. Fix \(R>0\). By
\eqref{eq:state-growth}, there is a constant \(C_R>0\) such that, for all
\(|x|\le R\), all \(a\in\Acal\), all \(b\in\Bcal\), and all \(t\in[0,1]\),
\begin{equation}\label{eq:short-time}
  \sup_{0\le s\le t}|X_s^{x,a,b}-x|\le C_Rt.
\end{equation}

\section{Dynamic Programming and Hamilton--Jacobi Equations}\label{sec:ek-dpp}\label{sec:hji}

Rectangularity allows the game to restart from its current state. Continuity
supports dynamic programming, which gives the value equations; comparison
then establishes uniqueness and orders the values.

\begin{lemma}\label{lem:value-continuity}
There exists a modulus of continuity \(\omega_J\), independent of \(a\) and
\(b\), such that
\begin{equation}\label{eq:J-uniform-continuity}
  |J(x;a,b)-J(y;a,b)|\le \omega_J(|x-y|)
\end{equation}
for all \(x,y\in\R^d\), \(a\in\Acal\), and \(b\in\Bcal\). In particular,
\begin{equation*}
  |V^\pm(x)-V^\pm(y)|\le \omega_J(|x-y|).
\end{equation*}
\end{lemma}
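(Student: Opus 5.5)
We split the discounted integral at a time horizon \(T>0\), to be chosen depending on \(|x-y|\). Fix \(x,y\in\R^d\), \(a\in\Acal\), \(b\in\Bcal\), and write \(X_s=X_s^{x,a,b}\), \(Y_s=X_s^{y,a,b}\). Since the same control \(a(s)\) enters both costs, (A3) gives
\[
  |c(X_s,a(s))-c(Y_s,a(s))|\le \omega_c(|X_s-Y_s|)\le \omega_c\bigl(e^{L_Fs}|x-y|\bigr),
\]
by \eqref{eq:state-lip-initial} of Proposition~\ref{prop:ode} and monotonicity of \(\omega_c\). On \([T,\infty)\) we only use \(0\le c\le M_c\), so the difference of the integrands is at most \(M_c\). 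Hence, for every \(T>0\),
\[
  |J(x;a,b)-J(y;a,b)|\le \int_0^T e^{-\rho s}\omega_c\bigl(e^{L_Fs}|x-y|\bigr)\dd s+M_c\int_T^\infty e^{-\rho s}\dd s
  \le \frac{\omega_c(e^{L_FT}|x-y|)}{\rho}+\frac{M_c}{\rho}e^{-\rho T}.
\]
We then define
\[
  \omega_J(\delta):=\inf_{T>0}\Bigl[\tfrac1\rho\,\omega_c(e^{L_FT}\delta)+\tfrac{M_c}{\rho}e^{-\rho T}\Bigr].
\]
This function is nondecreasing in \(\delta\), since \(\omega_c\) is. It does not depend on \(a,b,x,y\), so the bound \eqref{eq:J-uniform-continuity} is uniform in the controls.

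The only point needing care is that \(\omega_J(\delta)\to0\) as \(\delta\to0\), i.e., that \(\omega_J\) really is a modulus. This is where the trade-off between the growing exponential \(e^{L_FT}\) and the discount enters, and it is the step I expect to be the (mild) main obstacle. Given \(\varepsilon>0\), first choose \(T\) with \(M_ce^{-\rho T}/\rho<\varepsilon/2\). Then, using \(\omega_c(0^+)=0\), choose \(\delta_0\) so small that \(\omega_c(e^{L_FT}\delta_0)<\rho\varepsilon/2\). For \(\delta\le\delta_0\) this gives \(\omega_J(\delta)<\varepsilon\). If an explicit form is desired, one can take \(T=\frac{1}{2L_F}\log(1/\delta)\) for \(\delta<1\), which yields \(\omega_J(\delta)\le \rho^{-1}\omega_c(\delta^{1/2})+\rho^{-1}M_c\delta^{\rho/(2L_F)}\). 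This is a Hölder-type bound when \(\omega_c\) is Hölder.

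For the values we use that the bound on \(J\) is uniform over all pairs \((a,b)\). For any \(\alpha\in\Gammac\) and \(b\in\Bcal\), it gives \(J(x;\alpha[b],b)\le J(y;\alpha[b],b)+\omega_J(|x-y|)\). Taking \(\sup_b\) and then \(\inf_\alpha\) yields \(V^-(x)\le V^-(y)+\omega_J(|x-y|)\). Exchanging \(x\) and \(y\) gives the reverse inequality. The same argument, with \(\inf_a\) followed by \(\sup_\beta\), applies to \(V^+\). Here it matters that the strategy sets \(\Gammac,\Gammaa\) do not depend on the initial point.
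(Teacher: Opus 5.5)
Your proof is correct and is essentially the paper's argument: you combine the flow estimate \eqref{eq:state-lip-initial} with the uniform cost modulus \(\omega_c\), then take suprema and infima over paths and strategies that do not depend on the initial point. The only difference is cosmetic. The paper keeps the full integral \(\omega_J(r)=\int_0^\infty e^{-\rho s}\omega_c(e^{L_Fs}r)\dd s\) and gets \(\omega_J(0+)=0\) by dominated convergence, using that \(\omega_c\) is bounded. You instead truncate at \(T\) and optimize over \(T\), which proves the same limit by hand; note that the paper's integral form of \(\omega_J\) is the one reused in Proposition~\ref{prop:scheme-fixed-point}(ii).
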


\begin{proof}
By \eqref{eq:state-lip-initial},
\(|X_s^{x,a,b}-X_s^{y,a,b}|\le e^{L_Fs}|x-y|\). Hence
\[
  |J(x;a,b)-J(y;a,b)|
  \le \int_0^\infty e^{-\rho s}
  \omega_c(e^{L_Fs}|x-y|)\,\dd s.
\]
Set
\begin{equation}\label{eq:omega-J}
  \omega_J(r):=\int_0^\infty e^{-\rho s}\omega_c(e^{L_Fs}r)\,\dd s.
\end{equation}
Since \(\omega_c\) is bounded, dominated convergence gives
\(\omega_J(r)\to0\) as \(r\downarrow0\). This proves
\eqref{eq:J-uniform-continuity}. The estimates for \(V^-\) and \(V^+\) follow
by taking the corresponding suprema and infima.
\end{proof}

For \(T>0\), write
\begin{equation*}
  I_T(x;a,b):=\int_0^T e^{-\rho s}c(X_s^{x,a,b},a(s))\,\dd s.
\end{equation*}

For \(T>0\), define time shifts
\begin{equation*}
  a^T(s):=a(T+s),\qquad b^T(s):=b(T+s),\qquad s\ge0,
\end{equation*}
and concatenations
\begin{equation*}
  (a\otimes_T\widetilde a)(s):=
  \begin{cases}
    a(s),&0\le s<T,\\
    \widetilde a(s-T),&s\ge T,
  \end{cases}
\end{equation*}
with \(b\otimes_T\widetilde b\) defined analogously. The sets \(\Acal\) and
\(\Bcal\) are stable under time shifts and concatenations.

\begin{lemma}\label{lem:flow-cost}
For every \(T>0\), \(t\ge0\), \(x\in\R^d\), \(a\in\Acal\), and \(b\in\Bcal\),
\begin{equation}\label{eq:flow-property}
  X_{T+t}^{x,a,b}=X_t^{X_T^{x,a,b},a^T,b^T}.
\end{equation}
Consequently,
\begin{equation}\label{eq:cost-splitting}
  J(x;a,b)=
  \int_0^T e^{-\rho s}c(X_s^{x,a,b},a(s))\,\dd s
  +e^{-\rho T}J(X_T^{x,a,b};a^T,b^T).
\end{equation}
\end{lemma}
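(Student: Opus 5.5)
The flow property \eqref{eq:flow-property} is a uniqueness statement, and the cost splitting \eqref{eq:cost-splitting} follows from it by a change of variables. Fix \(T>0\), \(x\), \(a\), \(b\), and set \(y:=X_T^{x,a,b}\). Define \(Y_t:=X_{T+t}^{x,a,b}\) for \(t\ge0\). We will show that \(Y\) solves the state equation \eqref{eq:state-ode} with initial point \(y\) and the shifted pair \((a^T,b^T)\). The shifted pair lies in \(\Acal\times\Bcal\), since both sets are stable under time shifts; this is just measurability of the composition of \(a\) or \(b\) with the translation \(s\mapsto T+s\).

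\emph{Step 1: \(Y\) is a solution.} Subtract the integral forms of \eqref{eq:state-ode} at times \(T+t\) and \(T\), and substitute \(s=T+\sigma\):
\[
  Y_t = y+\int_T^{T+t}F(X_s^{x,a,b},a(s),b(s))\,\dd s
      = y+\int_0^{t}F(Y_\sigma,a^T(\sigma),b^T(\sigma))\,\dd \sigma .
\]
So \(Y\) is an absolutely continuous solution of the state equation started at \(y\) and driven by \((a^T,b^T)\).

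\emph{Step 2: identification.} By the uniqueness part of Proposition~\ref{prop:ode}, \(Y=X^{y,a^T,b^T}\), which is \eqref{eq:flow-property}. Choosing different a.e.\ representatives of \(a\) and \(b\) does not change the integrals, so the identity is independent of representatives.

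\emph{Step 3: cost splitting.} Split the integral in \eqref{eq:cost} at \(T\). The integrand is nonnegative, measurable and bounded by \(M_c e^{-\rho s}\), so the split is legitimate. In the tail integral over \([T,\infty)\), substitute \(s=T+t\) and factor out \(e^{-\rho T}\). Then \eqref{eq:flow-property} turns \(c(X_{T+t}^{x,a,b},a(T+t))\) into \(c(X_t^{y,a^T,b^T},a^T(t))\), and the tail becomes \(e^{-\rho T}J(y;a^T,b^T)\). This gives \eqref{eq:cost-splitting}.

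No step is a real obstacle. The only point that needs care is Step 2: the uniqueness in Proposition~\ref{prop:ode} is needed exactly so that the solution restarted at \(y\) with the shifted controls is the same curve as the tail of the original trajectory.
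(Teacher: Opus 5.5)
Your proposal is correct and follows essentially the same route as the paper: you check that the tail \(t\mapsto X_{T+t}^{x,a,b}\) solves the state equation restarted at \(X_T^{x,a,b}\) with the shifted pair \((a^T,b^T)\), and you invoke the uniqueness in Proposition~\ref{prop:ode}. You then split the cost integral at \(T\) and change variables \(s=T+t\). The paper's proof is this argument in compressed form; you have simply written out the integral identity and the change of variables explicitly.
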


\begin{proof}
The map \(t\mapsto X_{T+t}^{x,a,b}\) and the map
\(t\mapsto X_t^{X_T^{x,a,b},a^T,b^T}\) solve the same initial value problem on
\([0,\infty)\). Uniqueness gives \eqref{eq:flow-property}. Splitting the
integral in \eqref{eq:cost} at \(T\), changing variables \(s=T+r\), and using
\eqref{eq:flow-property} gives \eqref{eq:cost-splitting}.
\end{proof}

The DPP follows the classical strategy-concatenation argument of
Elliott and Kalton~\citep[Thm.~3.1]{ElliottKalton1974} and
Evans and Souganidis~\citep[Thm.~3.1]{EvansSouganidis}. Elliott and Kalton allow compact metric
action spaces, so the maximizing and minimizing spaces can be taken as
\(\Dcal\) and \(U\). Evans and Souganidis treat finite horizons with
compact Euclidean action spaces and bounded Lipschitz data. Here we retain the
concatenation construction and use the state estimate and common cost
modulus to handle linear drift growth and uniformly continuous costs.
For the corresponding horizon-\(S\) values \(V_S^\pm\) with zero terminal
payoff, the discounted tail gives
\[
  \|V^\pm-V_S^\pm\|_\infty
  \le \frac{M_c}{\rho}e^{-\rho S}.
\]
Thus the discounted tail vanishes for every \(\rho>0\). We give the direct
infinite-horizon argument, using Lemma~\ref{lem:value-continuity} to choose
continuation strategies on a finite cover of the reachable states.

\begin{theorem}\label{thm:dpp}
Under Assumption~\ref{ass:standing}, for every \(T>0\) and \(x\in\R^d\),
\begin{align}
V^-(x)
&=
\inf_{\alpha\in\Gammac}\sup_{b\in\Bcal}
\left\{
  I_T(x;\alpha[b],b)+e^{-\rho T}V^-(X_T^{x,\alpha[b],b})
\right\},
\label{eq:dpp-minus}\\
V^+(x)
&=
\sup_{\beta\in\Gammaa}\inf_{a\in\Acal}
\left\{
  I_T(x;a,\beta[a])+e^{-\rho T}V^+(X_T^{x,a,\beta[a]})
\right\}.
\label{eq:dpp-plus}
\end{align}
\end{theorem}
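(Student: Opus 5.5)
We prove \eqref{eq:dpp-minus} in detail; \eqref{eq:dpp-plus} follows by the symmetric argument with the roles of controller and adversary exchanged. Denote the right-hand side of \eqref{eq:dpp-minus} by \(W(x)\). The plan is the classical two-inequality argument of Evans--Souganidis. Three earlier results are used throughout: the cost splitting \eqref{eq:cost-splitting} of Lemma~\ref{lem:flow-cost}, the uniform modulus \(\omega_J\) of Lemma~\ref{lem:value-continuity}, and the bounded reachable set from Proposition~\ref{prop:ode}. By \eqref{eq:state-growth}, every \(X_T^{x,a,b}\) lies in the ball \(\bar B_{R_T}\), where \(R_T:=(1+|x|)e^{C_FT}\).

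\emph{Step 1: \(V^-\ge W\).} Fix \(\varepsilon>0\) and a strategy \(\alpha\in\Gammac\) with \(\sup_bJ(x;\alpha[b],b)\le V^-(x)+\varepsilon\). We will show that for every \(b_0\in\Bcal\),
\[
I_T(x;\alpha[b_0],b_0)+e^{-\rho T}V^-(X_T)\le V^-(x)+2\varepsilon,
\qquad X_T:=X_T^{x,\alpha[b_0],b_0}.
\]
Taking the supremum over \(b_0\) and then the infimum over \(\alpha\) gives the claim. To prove the display, define the continuation strategy \(\widetilde\alpha[\widetilde b]:=(\alpha[b_0\otimes_T\widetilde b])^T\). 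It is nonanticipative because \(\alpha\) is. Since \(X_T\) is fixed, there is \(\widetilde b\) with \(J(X_T;\widetilde\alpha[\widetilde b],\widetilde b)\ge V^-(X_T)-\varepsilon\). Put \(b:=b_0\otimes_T\widetilde b\). Nonanticipativity gives \(\alpha[b]=\alpha[b_0]\) a.e. on \([0,T]\), so the trajectory under \((\alpha[b],b)\) coincides with the one under \((\alpha[b_0],b_0)\) up to time \(T\). Applying \eqref{eq:cost-splitting} to \(J(x;\alpha[b],b)\le V^-(x)+\varepsilon\) then yields the inequality.

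\emph{Step 2: \(V^-\le W\).} Fix \(\varepsilon>0\) and \(\alpha_0\in\Gammac\) that is \(\varepsilon\)-optimal in \(W(x)\). Choose \(\delta>0\) with \(\omega_J(\delta)\le\varepsilon\), and cover \(\bar B_{R_T}\) by finitely many disjoint Borel sets \(E_1,\dots,E_n\) of diameter less than \(\delta\), with representative points \(y_i\in E_i\). For each \(i\), pick \(\alpha_i\in\Gammac\) with \(\sup_bJ(y_i;\alpha_i[b],b)\le V^-(y_i)+\varepsilon\). Lemma~\ref{lem:value-continuity} then gives, for every \(y\in E_i\) and every \(b\),
\[
J(y;\alpha_i[b],b)\le V^-(y)+3\varepsilon .
\]
Define the concatenated strategy
\[
\alpha[b]:=\alpha_0[b]\otimes_T\alpha_i[b^T]\quad\text{when } X_T^{x,\alpha_0[b],b}\in E_i .
\]
By \eqref{eq:cost-splitting}, \(J(x;\alpha[b],b)\le I_T+e^{-\rho T}\bigl(V^-(X_T)+3\varepsilon\bigr)\le W(x)+4\varepsilon\) for every \(b\). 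Hence \(V^-(x)\le W(x)+4\varepsilon\), and letting \(\varepsilon\downarrow0\) gives the inequality.

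\emph{Main obstacle.} The delicate point is checking that the concatenated \(\alpha\) is a genuine element of \(\Gammac\). There are two things to verify.

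First, the choice of index \(i\) must depend only on \(b|_{[0,T]}\). It does, because the point \(X_T^{x,\alpha_0[b],b}\) is determined by \(\alpha_0[b]\) and \(b\) on \([0,T]\), and \(\alpha_0\) is nonanticipative.

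Second, \(\alpha\) must be nonanticipative at every time \(t\). Suppose \(b_1=b_2\) a.e. on \([0,t]\). If \(t\le T\), the outputs \(\alpha[b_1]\) and \(\alpha[b_2]\) agree on \([0,t]\) via \(\alpha_0\). If \(t>T\), both paths select the same index \(i\), and the shifted paths satisfy \(b_1^T=b_2^T\) a.e. on \([0,t-T]\), so nonanticipativity of \(\alpha_i\) gives agreement on \([T,t]\).

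Measurability of \(\alpha[b]\) is immediate, since it is a piecewise concatenation of measurable controls. Because the partition \(E_1,\dots,E_n\) is finite, no measurable-selection argument is needed; this is exactly where compactness of the reachable set from \eqref{eq:state-growth} enters.
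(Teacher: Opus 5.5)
Your proof is correct and follows essentially the same route as the paper. For $V^-\ge W$ you use the continuation strategy $\alpha[b_0\otimes_T\cdot\,]$ shifted by $T$, a near-optimal concatenated adversary path, and cost splitting. For $V^-\le W$ you use a finite $\delta$-net of the compact reachable ball, near-optimal strategies at the net points, an index-selected concatenation with the same two-case nonanticipativity check, and the modulus $\omega_J$; the paper also writes out the $V^+$ case explicitly, which is exactly the mirror image you describe.
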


\begin{proof}

\smallskip
\noindent\textbf{1. The lower value.}
Write \(W_T^-(x)\) for the right-hand side of \eqref{eq:dpp-minus}.
To show \(V^-\ge W_T^-\), fix \(\alpha\in\Gammac\) and \(\eta>0\). For a prefix \(b^0\in\Bcal\),
define the continuation strategy
\begin{equation*}
  \alpha_{T,b^0}[\widetilde b](s):=
  \alpha[b^0\otimes_T\widetilde b](T+s),\qquad s\ge0.
\end{equation*}
If \(\widetilde b_1=\widetilde b_2\) a.e. on \([0,t]\), then
\(b^0\otimes_T\widetilde b_1=b^0\otimes_T\widetilde b_2\) a.e. on
\([0,T+t]\). By nonanticipativity of \(\alpha\),
\(\alpha_{T,b^0}[\widetilde b_1]=\alpha_{T,b^0}[\widetilde b_2]\) a.e. on
\([0,t]\). Thus \(\alpha_{T,b^0}\in\Gammac\).

Let \(y=X_T^{x,\alpha[b^0],b^0}\). By the definition of \(V^-\),
\[
  V^-(y)
  \le \sup_{\widetilde b\in\Bcal}
  J(y;\alpha_{T,b^0}[\widetilde b],\widetilde b).
\]
Choose \(\widetilde b\in\Bcal\) such that
\[
  J(y;\alpha_{T,b^0}[\widetilde b],\widetilde b)
  \ge V^-(y)-\eta.
\]
Set \(b=b^0\otimes_T\widetilde b\). Then \(b=b^0\) a.e. on \([0,T]\), so
\(\alpha[b]=\alpha[b^0]\) a.e. on \([0,T]\), and the state at time \(T\) is
\(y\). Lemma~\ref{lem:flow-cost} gives
\[
\begin{aligned}
  J(x;\alpha[b],b)
  &=I_T(x;\alpha[b^0],b^0)
  +e^{-\rho T}J(y;\alpha_{T,b^0}[\widetilde b],\widetilde b)\\
  &\ge I_T(x;\alpha[b^0],b^0)+e^{-\rho T}V^-(y)-e^{-\rho T}\eta.
\end{aligned}
\]
Taking the supremum over \(b^0\), then the infimum over \(\alpha\), and finally
letting \(\eta\downarrow0\), yields
\begin{equation}\label{eq:dpp-minus-ge}
  V^-(x)\ge W_T^-(x).
\end{equation}

We prove the reverse inequality. Let \(\eta>0\), and choose
\(\bar\alpha\in\Gammac\) such that
\begin{equation}\label{eq:alpha-bar-eps}
  \sup_{b\in\Bcal}
  \{I_T(x;\bar\alpha[b],b)+e^{-\rho T}V^-(X_T^{x,\bar\alpha[b],b})\}
  \le W_T^-(x)+\eta.
\end{equation}
By \eqref{eq:state-growth}, all states reachable from \(x\) up to time \(T\)
belong to the compact ball
\begin{equation*}
  K_T(x):=\{z\in\R^d: |z|\le (1+|x|)e^{C_FT}-1\}.
\end{equation*}
Let \(\delta>0\), choose a finite \(\delta\)-net
\(\{y_1,\ldots,y_N\}\subset K_T(x)\), and define the Borel index map
\(\iota:K_T(x)\to\{1,\ldots,N\}\) by
\(\iota(y):=\min\{i:|y-y_i|=\min_j|y-y_j|\}\), so that
\(|y-y_{\iota(y)}|\le\delta\) for all \(y\in K_T(x)\). For each \(i\), choose
\(\alpha^i\in\Gammac\) satisfying
\begin{equation}\label{eq:alpha-i-eps}
  \sup_{\widetilde b\in\Bcal}J(y_i;\alpha^i[\widetilde b],\widetilde b)
  \le V^-(y_i)+\eta.
\end{equation}
Define \(\alpha^*\) by
\begin{equation*}
  \alpha^*[b](s):=
  \begin{cases}
    \bar\alpha[b](s),&0\le s<T,\\
    \alpha^{\iota(y_b)}[b^T](s-T),&s\ge T,
  \end{cases}
  \qquad y_b:=X_T^{x,\bar\alpha[b],b}.
\end{equation*}
The output of \(\alpha^*\) is measurable by concatenation.
If \(b_1=b_2\) a.e. on \([0,t]\) with \(t\le T\), equality of the outputs
follows from nonanticipativity of \(\bar\alpha\). For \(t>T\), the paths
reach the same state \(y_{b_1}=y_{b_2}\) at \(T\), hence select the same
continuation strategy, whose nonanticipativity gives equality on \([T,t]\).
Therefore \(\alpha^*\in\Gammac\).

For any \(b\in\Bcal\), by Lemma~\ref{lem:flow-cost},
\eqref{eq:alpha-i-eps}, and Lemma~\ref{lem:value-continuity},
\[
\begin{aligned}
  J(x;\alpha^*[b],b)
  &=I_T(x;\bar\alpha[b],b)
  +e^{-\rho T}J(y_b;\alpha^{\iota(y_b)}[b^T],b^T)\\
  &\le I_T(x;\bar\alpha[b],b)
  +e^{-\rho T}\{V^-(y_{\iota(y_b)})+\eta+
     \omega_J(|y_b-y_{\iota(y_b)}|)\}\\
  &\le I_T(x;\bar\alpha[b],b)
  +e^{-\rho T}\{V^-(y_b)+\eta+2\omega_J(\delta)\}.
\end{aligned}
\]
By definition, \(V^-(x)\le \sup_b J(x;\alpha^*[b],b)\). Taking the
supremum in the preceding estimate and using \eqref{eq:alpha-bar-eps} yields
\[
  V^-(x)\le W_T^-(x)+\eta+e^{-\rho T}\{\eta+2\omega_J(\delta)\}.
\]
Letting \(\delta\downarrow0\) and \(\eta\downarrow0\) gives
\begin{equation}\label{eq:dpp-minus-le}
  V^-(x)\le W_T^-(x).
\end{equation}
Combining \eqref{eq:dpp-minus-ge} and \eqref{eq:dpp-minus-le} proves
\eqref{eq:dpp-minus}.

\smallskip
\noindent\textbf{2. The upper value.}
Write \(W_T^+(x)\) for the right-hand side of \eqref{eq:dpp-plus}.
For \(\eta>0\), fix \(\beta\in\Gammaa\), \(a^0\in\Acal\), and define
\begin{equation*}
  \beta_{T,a^0}[\widetilde a](s):=
  \beta[a^0\otimes_T\widetilde a](T+s),\qquad s\ge0.
\end{equation*}
As above, \(\beta_{T,a^0}\in\Gammaa\). With
\(y=X_T^{x,a^0,\beta[a^0]}\), choose \(\widetilde a\in\Acal\) such that
\[
  J(y;\widetilde a,\beta_{T,a^0}[\widetilde a])\le V^+(y)+\eta.
\]
Setting \(a=a^0\otimes_T\widetilde a\) and using Lemma~\ref{lem:flow-cost}, we
obtain
\[
  J(x;a,\beta[a])
  \le I_T(x;a^0,\beta[a^0])+e^{-\rho T}V^+(y)+e^{-\rho T}\eta.
\]
Taking the infimum over \(a^0\), then the supremum over \(\beta\), and
letting \(\eta\downarrow0\) gives
\begin{equation}\label{eq:dpp-plus-le}
  V^+(x)\le W_T^+(x).
\end{equation}

For the reverse inequality, fix \(\eta>0\) and choose \(\bar\beta\in\Gammaa\)
such that
\begin{equation}\label{eq:beta-bar-eps}
  \inf_{a\in\Acal}
  \{I_T(x;a,\bar\beta[a])+e^{-\rho T}V^+(X_T^{x,a,\bar\beta[a]})\}
  \ge W_T^+(x)-\eta.
\end{equation}
On the same \(\delta\)-net \(\{y_1,\ldots,y_N\}\) of \(K_T(x)\),
choose \(\beta^i\in\Gammaa\) for each \(i\) such that
\begin{equation*}
  \inf_{\widetilde a\in\Acal}J(y_i;\widetilde a,\beta^i[\widetilde a])
  \ge V^+(y_i)-\eta.
\end{equation*}
Define
\begin{equation*}
  \beta^*[a](s):=
  \begin{cases}
    \bar\beta[a](s),&0\le s<T,\\
    \beta^{\iota(y_a)}[a^T](s-T),&s\ge T,
  \end{cases}
  \qquad y_a:=X_T^{x,a,\bar\beta[a]}.
\end{equation*}
The same concatenation argument shows that \(\beta^*\in\Gammaa\). For every
\(a\in\Acal\),
\[
\begin{aligned}
  J(x;a,\beta^*[a])
  &=I_T(x;a,\bar\beta[a])
  +e^{-\rho T}J(y_a;a^T,\beta^{\iota(y_a)}[a^T])\\
  &\ge I_T(x;a,\bar\beta[a])
  +e^{-\rho T}\{V^+(y_{\iota(y_a)})-\eta
     -\omega_J(|y_a-y_{\iota(y_a)}|)\}\\
  &\ge I_T(x;a,\bar\beta[a])
  +e^{-\rho T}\{V^+(y_a)-\eta-2\omega_J(\delta)\}.
\end{aligned}
\]
Taking the infimum over \(a\) and using \eqref{eq:beta-bar-eps}
gives the following bound, since
\(V^+(x)\ge \inf_a J(x;a,\beta^*[a])\):
\[
  V^+(x)\ge W_T^+(x)-\eta-e^{-\rho T}\{\eta+2\omega_J(\delta)\}.
\]
Letting \(\delta\downarrow0\) and \(\eta\downarrow0\) gives
\begin{equation*}
  V^+(x)\ge W_T^+(x).
\end{equation*}
Together with \eqref{eq:dpp-plus-le}, this proves \eqref{eq:dpp-plus}.
\end{proof}

\subsection{Hamiltonians and Viscosity Solutions}

Define, for \((x,p)\in\R^d\times\R^d\),
\begin{align}
  H^+(x,p)&:=\sup_{\mu\in\Dcal}\inf_{u\in U}
  \{c(x,u)+p\cdot F(x,u,\mu)\},
  \label{eq:Hplus}\\
  H^-(x,p)&:=\inf_{u\in U}\sup_{\mu\in\Dcal}
  \{c(x,u)+p\cdot F(x,u,\mu)\}.
  \label{eq:Hminus}
\end{align}
The notation follows the Elliott--Kalton convention: \(V^-\) solves the
Hamilton--Jacobi equation with \(H^+\), and \(V^+\) solves the one with
\(H^-\). The minimax inequality gives
\begin{equation}\label{eq:H-minimax}
  H^+(x,p)\le H^-(x,p).
\end{equation}

\begin{remark}[Information Pattern]\label{rem:information}
In \(V^-\), the minimizing player observes the current adversarial action,
so its minimization is inside \(H^+=\sup_\mu\inf_u\). The roles are
reversed for \(V^+\) and \(H^-\). We use no-delay Elliott--Kalton
strategies throughout; information restrictions for implemented controllers
are discussed in Remark~\ref{rem:certificate-info}.
\end{remark}

\begin{lemma}\label{lem:H-cont}
Under Assumption~\ref{ass:standing}, the functions \(H^+\) and \(H^-\) are
continuous on \(\R^d\times\R^d\).
\end{lemma}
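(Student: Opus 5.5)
The plan is to prove continuity of \(H^\pm\) by showing that the integrand
\[
  \Phi(x,p,u,\mu):=c(x,u)+p\cdot F(x,u,\mu)
\]
is jointly continuous on \(\R^d\times\R^d\times U\times\Dcal\). Since \(U\) and \(\Dcal\) are compact, a standard Berge-type argument then shows that the inner and outer optimizations preserve continuity. Joint continuity of \(\Phi\) is immediate from (A2), which gives continuity of \(F\) on \(\R^d\times U\times\Dcal\) with \(\Dcal\) carrying the weak topology, together with (A3), which gives continuity of \(c\). The inner product with \(p\) is continuous.

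Next I will use a uniform modulus on compact sets. Fix \(R>0\) and restrict to \(|x|,|p|\le R\). The set \(K_R:=\bar B_R\times\bar B_R\times U\times\Dcal\) is a compact metrizable space, because \(\Dcal\) is compact and metrizable by (A1). Hence \(\Phi\) is uniformly continuous on \(K_R\), with some modulus \(\omega_R\) with respect to the \((x,p)\)-variables that holds uniformly in \((u,\mu)\):
\[
  \sup_{u\in U,\ \mu\in\Dcal}|\Phi(x,p,u,\mu)-\Phi(y,q,u,\mu)|\le\omega_R(|x-y|+|p-q|).
\]
One can also bypass uniform continuity in \(\mu\) and write the bound explicitly as
\[
  \omega_c(|x-y|)+R\,L_F|x-y|+C_F(1+R)|p-q|,
\]
using the Lipschitz and growth bounds in (A2). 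This explicit version is cleaner and needs no compactness of \(\Dcal\) for this step.

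Finally, I will pass the bound through the optimizations. For any family of functions \(g_{u,\mu},h_{u,\mu}\) with \(\sup_{u,\mu}|g-h|\le\varepsilon\), both \(\sup_\mu\inf_u\) and \(\inf_u\sup_\mu\) differ by at most \(\varepsilon\). This is elementary: \(\inf\) and \(\sup\) are \(1\)-Lipschitz for the sup norm. The values are finite because \(\Phi\) is bounded on \(K_R\). Applying this with \(\varepsilon=\omega_c(|x-y|)+RL_F|x-y|+C_F(1+R)|p-q|\) gives local uniform continuity of \(H^\pm\) on \(\bar B_R\times\bar B_R\) for every \(R\), and hence continuity on \(\R^d\times\R^d\).

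There is no serious obstacle. The only point needing care is that the estimate must be uniform in \((u,\mu)\). The explicit Lipschitz bound handles this directly, so the weak-topology continuity of \(F\) in \(\mu\) is not even needed for this lemma.
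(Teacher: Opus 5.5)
Your argument is correct, but it differs from the paper's proof of this lemma. The paper appeals to Berge's maximum theorem. The map \(L(x,p,u,\mu)=c(x,u)+p\cdot F(x,u,\mu)\) is jointly continuous on \(\R^d\times\R^d\times U\times\Dcal\), and compactness of \(U\) and \(\Dcal\) allows Berge to be applied twice. The inner optimization gives a continuous function of \((x,p)\) and the remaining action, and the outer optimization then gives continuity of \(H^\pm\). That route genuinely uses the weak continuity of \(F\) in \(\mu\), continuity in \(u\), and compactness of both action sets.

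You instead bound \(\sup_{u,\mu}|\Phi(x,p,u,\mu)-\Phi(y,q,u,\mu)|\) by \(\omega_c(|x-y|)+RL_F|x-y|+C_F(1+R)|p-q|\). You then use that \(\inf\) and \(\sup\) are \(1\)-Lipschitz in the sup norm. This is exactly the argument the paper gives next for the structure estimates \eqref{eq:H-x-module}--\eqref{eq:H-p-module} in Lemma~\ref{lem:H-structure}. In effect, you derive continuity as a corollary of those estimates.

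Your version yields an explicit, locally uniform modulus. As you note, it needs no continuity or compactness in \((u,\mu)\); finiteness of \(H^\pm\) follows from \(|\Phi|\le M_c+|p|C_F(1+|x|)\). The Berge route is shorter and requires only joint continuity, not moduli uniform in the actions. Your opening mentions of Berge and of uniform continuity on the compact set \(K_R\) are valid but become redundant once you use the explicit bound.
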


\begin{proof}
The function
\[
  L(x,p,u,\mu):=c(x,u)+p\cdot F(x,u,\mu)
\]
is continuous on \(\R^d\times\R^d\times U\times\Dcal\). Since
\(U\times\Dcal\) is compact, Berge's maximum theorem implies that the partial
infima and suprema appearing in \eqref{eq:Hplus} and \eqref{eq:Hminus} are
continuous.
\end{proof}

The Hamiltonians also satisfy the following structure estimates, which are the
key to the comparison principle.

\begin{lemma}\label{lem:H-structure}
Let \(H\in\{H^+,H^-\}\). Under Assumption~\ref{ass:standing}, for all
\(x,y,p,q\in\R^d\),
\begin{align}
  |H(x,p)-H(y,p)|&\le \omega_c(|x-y|)+L_F|p|\,|x-y|,
  \label{eq:H-x-module}\\
  |H(x,p)-H(x,q)|&\le C_F(1+|x|)\,|p-q|.
  \label{eq:H-p-module}
\end{align}
In particular, with \(\omega_1(t):=\omega_c(t)+L_Ft\),
\begin{equation}\label{eq:H2-module}
  |H(x,p)-H(y,p)|\le \omega_1\bigl((1+|p|)|x-y|\bigr).
\end{equation}
\end{lemma}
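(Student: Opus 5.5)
The plan is to reduce both estimates to a single elementary fact: infima and suprema of families of functions are $1$-Lipschitz with respect to the uniform norm over the family. Concretely, if $g_1,g_2:\Lambda\to\R$ satisfy $|g_1(\lambda)-g_2(\lambda)|\le \varepsilon$ for every $\lambda\in\Lambda$, then
\[
  \Bigl|\inf_\Lambda g_1-\inf_\Lambda g_2\Bigr|\le\varepsilon
  \quad\text{and}\quad
  \Bigl|\sup_\Lambda g_1-\sup_\Lambda g_2\Bigr|\le\varepsilon .
\]
Applying this twice handles the nested $\sup_\mu\inf_u$ in $H^+$ and the $\inf_u\sup_\mu$ in $H^-$. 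So it is enough to bound the difference of the Lagrangian
\[
  L(x,p,u,\mu)=c(x,u)+p\cdot F(x,u,\mu),
\]
uniformly in $(u,\mu)\in U\times\Dcal$. Every such supremum and infimum is finite, because $U\times\Dcal$ is compact and $L$ is continuous (Lemma~\ref{lem:H-cont}).

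For \eqref{eq:H-x-module}, fix $p$ and compare $x$ with $y$. By (A3),
\[
  |c(x,u)-c(y,u)|\le\omega_c(|x-y|)
\]
uniformly in $u$. By the Lipschitz bound in (A2) together with Cauchy--Schwarz,
\[
  |p\cdot F(x,u,\mu)-p\cdot F(y,u,\mu)|\le |p|\,L_F|x-y|.
\]
Adding these gives $|L(x,p,u,\mu)-L(y,p,u,\mu)|\le\omega_c(|x-y|)+L_F|p||x-y|$ for all $(u,\mu)$. Passing through the inner optimization and then the outer one via the fact above yields \eqref{eq:H-x-module} for $H^+$ and for $H^-$ alike.

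For \eqref{eq:H-p-module}, fix $x$ and compare $p$ with $q$. The cost term cancels, and the growth bound in (A2) gives
\[
  |(p-q)\cdot F(x,u,\mu)|\le C_F(1+|x|)\,|p-q|.
\]
The same inf/sup argument then gives \eqref{eq:H-p-module}.

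To deduce \eqref{eq:H2-module}, note that $\omega_c$ is nondecreasing and $|x-y|\le(1+|p|)|x-y|$, so $\omega_c(|x-y|)\le\omega_c\bigl((1+|p|)|x-y|\bigr)$. Also $L_F|p||x-y|\le L_F(1+|p|)|x-y|$. Summing these two bounds gives exactly $\omega_1\bigl((1+|p|)|x-y|\bigr)$.

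Nothing here is a serious obstacle. The only point needing care is that the inf/sup Lipschitz fact requires the bound on $L$ to hold uniformly in the optimization variables. This is guaranteed because $\omega_c$ is uniform in $u$ by (A3), and $L_F$ and $C_F$ are uniform in $(u,\mu)$ by (A2).
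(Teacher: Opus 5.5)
Your proposal is correct and follows the paper's own proof: a bound on the difference of \(L(x,p,u,\mu)=c(x,u)+p\cdot F(x,u,\mu)\) that is uniform in \((u,\mu)\), obtained from (A2)--(A3), is transferred through the nested infimum and supremum. The combined modulus \(\omega_1\) then follows from monotonicity of \(\omega_c\) and \(1+|p|\ge1\).
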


\begin{proof}
For all \((u,\mu)\in U\times\Dcal\),
\(|L(x,p,u,\mu)-L(y,p,u,\mu)|\le\omega_c(|x-y|)+L_F|p||x-y|\) by (A2)--(A3),
and \(|L(x,p,u,\mu)-L(x,q,u,\mu)|\le C_F(1+|x|)|p-q|\). Infima and suprema of a
family of functions sharing a common modulus inherit that modulus, which gives
\eqref{eq:H-x-module}--\eqref{eq:H-p-module}. Estimate \eqref{eq:H2-module}
follows because \(\omega_c\) is nondecreasing and \((1+|p|)\ge1\).
\end{proof}

Write \(\BUC(\R^d)\) for the bounded uniformly continuous real-valued
functions on \(\R^d\). A function \(v\in\BUC(\R^d)\) is a viscosity subsolution of
\begin{equation*}
  \rho v(x)-H(x,Dv(x))=0
\end{equation*}
if, whenever \(\varphi\in C^1(\R^d)\) and \(v-\varphi\) has a local maximum at
\(x_0\),
\begin{equation*}
  \rho\varphi(x_0)-H(x_0,D\varphi(x_0))\le0,
\end{equation*}
where, as usual, \(\varphi\) is normalized so that \(\varphi(x_0)=v(x_0)\).
It is a viscosity supersolution if, whenever \(v-\varphi\) has a local minimum
at \(x_0\) (with the same normalization),
\begin{equation*}
  \rho\varphi(x_0)-H(x_0,D\varphi(x_0))\ge0.
\end{equation*}
It is a viscosity solution if it is both a subsolution and a supersolution.

The viscosity inequalities use a common short-time expansion. Let
\(\varphi\in C^1(\R^d)\), \(x_0\in\R^d\), and define
\begin{equation*}
  \mathscr L_\varphi(x,u,\mu):=
  c(x,u)+D\varphi(x)\cdot F(x,u,\mu)-\rho\varphi(x).
\end{equation*}
For each fixed \(R>0\), uniformly for \(|x_0|\le R\), \(a\in\Acal\), and
\(b\in\Bcal\),
\begin{equation}\label{eq:short-consistency}
\begin{aligned}
&\int_0^h e^{-\rho s}c(X_s^{x_0,a,b},a(s))\,\dd s
+e^{-\rho h}\varphi(X_h^{x_0,a,b})-\varphi(x_0)\\
&\qquad =
\int_0^h e^{-\rho s}\mathscr L_\varphi(x_0,a(s),b(s))\,\dd s
+h\,o_h(1),
\end{aligned}
\end{equation}
where \(o_h(1)\to0\) as \(h\downarrow0\), uniformly over the initial states
and action paths specified above.

Indeed, the chain rule gives the same identity with
\(\mathscr L_\varphi(X_s^{x_0,a,b},a(s),b(s))\). The replacement of
\(X_s^{x_0,a,b}\) by \(x_0\) follows from \eqref{eq:short-time} and the uniform
continuity of \(\mathscr L_\varphi\) on compact state balls times
\(U\times\Dcal\).

We use the derivation of viscosity inequalities from the DPP in
Evans and Souganidis~\citep[Thm.~4.1]{EvansSouganidis}, using \eqref{eq:short-consistency} and
measurable near-optimizers on the compact action spaces.

\begin{theorem}\label{thm:hji}
Under Assumption~\ref{ass:standing}, the lower value \(V^-\) is a bounded
uniformly continuous viscosity solution of
\begin{equation}\label{eq:HJ-lower}
  \rho V^-(x)-H^+(x,DV^-(x))=0,
  \qquad x\in\R^d,
\end{equation}
and the upper value \(V^+\) is a bounded uniformly continuous viscosity
solution of
\begin{equation}\label{eq:HJ-upper}
  \rho V^+(x)-H^-(x,DV^+(x))=0,
  \qquad x\in\R^d.
\end{equation}
\end{theorem}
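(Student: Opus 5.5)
Boundedness and uniform continuity of \(V^\pm\) are already in hand: \(0\le V^\pm\le M_c/\rho\) by \eqref{eq:cost-bound}, and Lemma~\ref{lem:value-continuity} gives the common modulus \(\omega_J\). So only the viscosity inequalities remain. We will derive them from the DPP of Theorem~\ref{thm:dpp} with \(T=h\downarrow0\), combined with the short-time expansion \eqref{eq:short-consistency}. We treat \(V^-\) and \(H^+\) in detail; the case of \(V^+\) and \(H^-\) is symmetric, with the roles of \(U\) and \(\Dcal\) exchanged.

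\emph{Subsolution for \(V^-\).} Let \(V^--\varphi\) have a local maximum at \(x_0\) with \(\varphi(x_0)=V^-(x_0)\), and suppose for contradiction that
\[
\rho\varphi(x_0)-H^+(x_0,D\varphi(x_0))=2\theta_0>0.
\]
Then for every \(\mu\in\Dcal\) there is \(u\) with \(\mathscr L_\varphi(x_0,u,\mu)\le-2\theta_0\). By continuity and compactness of \(\Dcal\), finitely many open sets \(O_1,\dots,O_k\) cover \(\Dcal\), with actions \(u_j\) such that \(\mathscr L_\varphi(x_0,u_j,\mu)\le-\theta_0\) on \(O_j\). Define a Borel selector \(u^*(\mu):=u_{j(\mu)}\), where \(j(\mu)\) is the least index with \(\mu\in O_j\). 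The strategy \(\alpha[b](s):=u^*(b(s))\) is then measurable and nonanticipative, since it is pointwise in time. For \(h\) small the trajectories stay in the ball where \(V^-\le\varphi\), so the DPP gives
\[
0\le\sup_b\Bigl\{I_h+e^{-\rho h}\varphi(X_h)-\varphi(x_0)\Bigr\}\le-\theta_0\int_0^he^{-\rho s}\,ds+h\,o_h(1).
\]
Here we used \eqref{eq:short-consistency}, which holds uniformly in \(b\). This yields a contradiction for small \(h\).

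\emph{Supersolution for \(V^-\).} Let \(V^--\varphi\) have a local minimum at \(x_0\), and suppose that \(\rho\varphi(x_0)-H^+(x_0,D\varphi(x_0))=-2\theta_0<0\). Then there is \(\mu^*\in\Dcal\) with \(\mathscr L_\varphi(x_0,u,\mu^*)\ge\theta_0\) for all \(u\in U\). Take the constant path \(b\equiv\mu^*\). For any \(\alpha\), the DPP together with \(V^-\ge\varphi\) near \(x_0\) gives
\[
0\ge \inf_\alpha\Bigl\{I_h+e^{-\rho h}\varphi(X_h)-\varphi(x_0)\Bigr\}\ge\theta_0\int_0^he^{-\rho s}\,ds-h\,o_h(1),
\]
which is again a contradiction.

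\emph{The case of \(V^+\) and \(H^-\).} For the supersolution property of \(V^+\), the adversary builds the pointwise strategy \(\beta[a](s):=\mu^*(a(s))\). It uses a Borel selector \(u\mapsto\mu^*(u)\) from a finite open cover of the compact set \(U\), exploiting \(\sup_\mu\) inside \(\inf_u\). For the subsolution property, the controller uses a constant action \(u^*\).

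\emph{Main obstacle.} The delicate step is constructing measurable near-optimal feedback selectors, \(u^*(\mu)\) on \(\Dcal\) and \(\mu^*(u)\) on \(U\), so that the induced strategies map \(\Bcal\) into \(\Acal\) and \(\Acal\) into \(\Bcal\). The finite-cover, least-index construction above handles this, since open sets are Borel in the metrizable space \(\Dcal\). The uniformity of \eqref{eq:short-consistency} over all paths then controls the remaining errors.
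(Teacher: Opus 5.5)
Your proof is correct and follows the paper's route. It combines the DPP with \(T=h\), the uniform short-time expansion \eqref{eq:short-consistency}, pointwise feedback strategies \(u^*(b(s))\) and \(\mu^*(a(s))\) for the subsolution of \(V^-\) and supersolution of \(V^+\), and constant paths or actions for the other two inequalities. The differences are cosmetic: you argue by contradiction rather than with \(\varepsilon\)- and \(h^2\)-optimal objects, and you build the selectors from a finite open cover with a least-index choice instead of the measurable selection theorem, which is the alternative the paper itself notes in the remark following its proof.
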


\begin{proof}
The cost bound \eqref{eq:cost-bound} and Lemma~\ref{lem:value-continuity}
give boundedness and uniform continuity. It remains to prove the viscosity inequalities. Fix \(u_\star\in U\) and
\(\mu_\star\in\Dcal\) to extend the short-time action paths by constants.

\smallskip
\noindent\textbf{1. The equation for \(V^-\).}
Let \(\varphi\in C^1(\R^d)\) and assume that \(V^- -\varphi\) has a local
maximum at \(x_0\). By subtracting a constant from \(\varphi\), we may assume
\(V^-(x_0)=\varphi(x_0)\) and \(V^-\le\varphi\) in a neighborhood of \(x_0\).
Let \(p=D\varphi(x_0)\).

Fix \(\varepsilon>0\). The action spaces are compact metric and the payoff
\((u,\mu)\mapsto c(x_0,u)+p\cdot F(x_0,u,\mu)\) is continuous.
Measurable selection \citep[Thm.~18.19]{AliprantisBorder} gives a Borel map
\(u_\varepsilon:\Dcal\to U\) such that
\begin{equation}\label{eq:u-eps-selector}
  c(x_0,u_\varepsilon(\mu))+p\cdot F(x_0,u_\varepsilon(\mu),\mu)
  \le
  \inf_{u\in U}\{c(x_0,u)+p\cdot F(x_0,u,\mu)\}+\varepsilon
\end{equation}
for all \(\mu\in\Dcal\). Define a controller strategy by
\(\alpha_\varepsilon[b](s):=u_\varepsilon(b(s))\) for \(s\in[0,h]\) and
\(\alpha_\varepsilon[b](s):=u_\star\) for \(s>h\). Composition with the Borel
selector preserves measurability, and the pointwise response is
nonanticipative. The DPP gives
\[
\begin{aligned}
  V^-(x_0)
  &\le \sup_{b\in\Bcal}
  \left\{
    \int_0^h e^{-\rho s}c(X_s^{x_0,\alpha_\varepsilon[b],b},
      \alpha_\varepsilon[b](s))\,\dd s
    +e^{-\rho h}V^-(X_h^{x_0,\alpha_\varepsilon[b],b})
  \right\}.
\end{aligned}
\]
For small \(h\), all trajectories remain in the neighborhood where
\(V^-\le\varphi\). Hence
\[
  0\le \sup_{b\in\Bcal}
  \left\{
    \int_0^h e^{-\rho s}c(X_s,\alpha_\varepsilon[b](s))\,\dd s
    +e^{-\rho h}\varphi(X_h)-\varphi(x_0)
  \right\}.
\]
Using \eqref{eq:short-consistency}, dividing by \(h\), and sending
\(h\downarrow0\), we obtain
\[
  0\le
  \sup_{\mu\in\Dcal}
  \{c(x_0,u_\varepsilon(\mu))+p\cdot F(x_0,u_\varepsilon(\mu),\mu)
  -\rho\varphi(x_0)\}.
\]
By \eqref{eq:u-eps-selector},
\[
  0\le H^+(x_0,p)+\varepsilon-\rho\varphi(x_0).
\]
Letting \(\varepsilon\downarrow0\) gives
\[
  \rho\varphi(x_0)-H^+(x_0,D\varphi(x_0))\le0.
\]
Thus \(V^-\) is a subsolution of \eqref{eq:HJ-lower}.

Now assume that \(V^- -\varphi\) has a local minimum at \(x_0\), with
\(V^-(x_0)=\varphi(x_0)\) and \(V^-\ge\varphi\) near \(x_0\), and set
\(p:=D\varphi(x_0)\). For each small
\(h>0\), choose an \(h^2\)-optimal strategy \(\alpha_h\in\Gammac\) in the DPP:
\begin{equation}\label{eq:alpha-h-opt}
\begin{aligned}
  &\sup_{b\in\Bcal}\bigg\{
    \int_0^h e^{-\rho s}c(X_s^{x_0,\alpha_h[b],b},\alpha_h[b](s))\,\dd s
    +e^{-\rho h}V^-(X_h^{x_0,\alpha_h[b],b})\bigg\}\\
  &\qquad\le V^-(x_0)+h^2.
\end{aligned}
\end{equation}
Fix \(\mu\in\Dcal\), and let \(b^\mu(s)\equiv\mu\). Since \(V^-\ge\varphi\)
near \(x_0\), \eqref{eq:alpha-h-opt} implies
\[
  \int_0^h e^{-\rho s}c(X_s,\alpha_h[b^\mu](s))\,\dd s
  +e^{-\rho h}\varphi(X_h)-\varphi(x_0)
  \le h^2.
\]
Using \eqref{eq:short-consistency}, dividing by \(h\), and sending
\(h\downarrow0\), we get
\[
  0\ge
  \inf_{u\in U}\{c(x_0,u)+p\cdot F(x_0,u,\mu)-\rho\varphi(x_0)\}
\]
for every \(\mu\in\Dcal\). Taking the supremum over \(\mu\) gives
\[
  0\ge H^+(x_0,p)-\rho\varphi(x_0),
\]
that is,
\[
  \rho\varphi(x_0)-H^+(x_0,D\varphi(x_0))\ge0.
\]
Therefore \(V^-\) is a supersolution of \eqref{eq:HJ-lower}.

\smallskip
\noindent\textbf{2. The equation for \(V^+\).}
Let \(\varphi\in C^1(\R^d)\), and suppose that \(V^+ -\varphi\) has a local
maximum at \(x_0\), with equality at \(x_0\) and \(V^+\le\varphi\) near
\(x_0\); set \(p:=D\varphi(x_0)\). For each small \(h>0\), choose an
\(h^2\)-optimal adversary strategy
\(\beta_h\in\Gammaa\) in the DPP:
\begin{equation}\label{eq:beta-h-opt-sub}
\begin{aligned}
  \inf_{a\in\Acal}
  \left\{
    \int_0^h e^{-\rho s}c(X_s^{x_0,a,\beta_h[a]},a(s))\,\dd s
    +e^{-\rho h}V^+(X_h^{x_0,a,\beta_h[a]})
  \right\}
  \ge V^+(x_0)-h^2.
\end{aligned}
\end{equation}
Fix \(u\in U\), and set \(a^u(s)\equiv u\). Since \(V^+\le\varphi\) near
\(x_0\), \eqref{eq:beta-h-opt-sub} gives
\[
  \varphi(x_0)-h^2
  \le
  \int_0^h e^{-\rho s}c(X_s,a^u(s))\,\dd s
  +e^{-\rho h}\varphi(X_h),
\]
where \(X_s=X_s^{x_0,a^u,\beta_h[a^u]}\). By
\eqref{eq:short-consistency},
\[
  0\le
  \frac1h\int_0^h e^{-\rho s}
  \mathscr L_\varphi(x_0,u,\beta_h[a^u](s))\,\dd s+o_h(1).
\]
Since
\[
  \mathscr L_\varphi(x_0,u,\beta_h[a^u](s))
  \le \sup_{\mu\in\Dcal}\{c(x_0,u)+p\cdot F(x_0,u,\mu)-\rho\varphi(x_0)\},
\]
we get
\[
  0\le \sup_{\mu\in\Dcal}
  \{c(x_0,u)+p\cdot F(x_0,u,\mu)-\rho\varphi(x_0)\}
\]
for every \(u\in U\). Taking the infimum over \(u\) yields
\[
  \rho\varphi(x_0)-H^-(x_0,D\varphi(x_0))\le0.
\]
Thus \(V^+\) is a subsolution of \eqref{eq:HJ-upper}.

Finally suppose that \(V^+ -\varphi\) has a local minimum at \(x_0\), with
\(V^+(x_0)=\varphi(x_0)\) and \(V^+\ge\varphi\) near \(x_0\), and set
\(p:=D\varphi(x_0)\). Fix
\(\varepsilon>0\). By measurable selection
\citep[Thm.~18.19]{AliprantisBorder}, there exists a Borel map
\(\mu_\varepsilon:U\to\Dcal\) such that, for all \(u\in U\),
\begin{equation}\label{eq:mu-eps-selector}
  c(x_0,u)+p\cdot F(x_0,u,\mu_\varepsilon(u))
  \ge
  \sup_{\mu\in\Dcal}\{c(x_0,u)+p\cdot F(x_0,u,\mu)\}-\varepsilon.
\end{equation}
Define \(\beta_\varepsilon[a](s):=\mu_\varepsilon(a(s))\) for \(s\in[0,h]\) and
\(\beta_\varepsilon[a](s):=\mu_\star\) for \(s>h\). This is a nonanticipative
adversary strategy, and \(s\mapsto\mu_\varepsilon(a(s))\) is measurable as
before. The DPP and \(V^+\ge\varphi\) give
\[
\begin{aligned}
  \varphi(x_0)=V^+(x_0)&\ge\inf_{a\in\Acal}\bigg\{
    \int_0^h e^{-\rho s}c(X_s^{x_0,a,\beta_\varepsilon[a]},a(s))\,\dd s\\
    &\hspace{15mm}+e^{-\rho h}\varphi(X_h^{x_0,a,\beta_\varepsilon[a]})\bigg\}.
\end{aligned}
\]
Using \eqref{eq:short-consistency}, dividing by \(h\), and letting
\(h\downarrow0\), we obtain
\[
  0\ge
  \inf_{u\in U}\{c(x_0,u)+p\cdot F(x_0,u,\mu_\varepsilon(u))
  -\rho\varphi(x_0)\}.
\]
By \eqref{eq:mu-eps-selector},
\[
  0\ge H^-(x_0,p)-\varepsilon-\rho\varphi(x_0).
\]
Letting \(\varepsilon\downarrow0\) gives
\[
  \rho\varphi(x_0)-H^-(x_0,D\varphi(x_0))\ge0.
\]
This proves the supersolution property for \(V^+\) and completes the four
viscosity inequalities.
\end{proof}

\begin{remark}[Measurable selectors]
The selectors used in the proof can be avoided by finite \(\varepsilon\)-nets
in \(U\) and \(\Dcal\). For each \(\varepsilon>0\), compactness gives finitely many actions,
and a Borel partition selects an action satisfying the required inequality.
This construction applies to compact metric action spaces.
\end{remark}

\subsection{Comparison, Uniqueness, and the Ordering of the Values}

\begin{theorem}\label{thm:comparison}
Let Assumption~\ref{ass:standing} hold and let \(H\in\{H^+,H^-\}\). If
\(v_1\in\BUC(\R^d)\) is a viscosity subsolution and \(v_2\in\BUC(\R^d)\) is a
viscosity supersolution of \(\rho v-H(x,Dv)=0\) on \(\R^d\), then
\(v_1\le v_2\) on \(\R^d\). In particular, \eqref{eq:HJ-lower} and
\eqref{eq:HJ-upper} each admit at most one solution in \(\BUC(\R^d)\), namely
\(V^-\) and \(V^+\) respectively.
\end{theorem}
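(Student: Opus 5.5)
We argue by contradiction with the standard doubling of variables, adapted to the linear growth of the Hamiltonian in \(p\) recorded in \eqref{eq:H-p-module}. Suppose \(\sup_{\R^d}(v_1-v_2)=:\sigma>0\). Since \(v_1,v_2\) are bounded but \(\R^d\) is unbounded and \(|H(x,p)-H(x,q)|\) grows like \(C_F(1+|x|)|p-q|\), a quadratic penalization \(\varepsilon|x|^2\) would produce a gradient error of size \(\varepsilon C_F(1+|x|)|x|\), which is not small. Following the logarithmic localization mentioned in the introduction, we instead use \(\psi(x):=\log(1+|x|^2)\), whose gradient satisfies \(|D\psi(x)|\le 2|x|/(1+|x|^2)\) and hence \((1+|x|)|D\psi(x)|\le 2\). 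For \(\varepsilon,\gamma>0\) we consider
\[
  \Phi(x,y):=v_1(x)-v_2(y)-\frac{|x-y|^2}{2\varepsilon}-\gamma\bigl(\psi(x)+\psi(y)\bigr).
\]
Boundedness of \(v_i\) and \(\psi\to\infty\) give a maximizer \((\bar x,\bar y)\). For \(\gamma\) small, \(\Phi(\bar x,\bar y)\ge\sigma/2\). The usual argument gives \(\gamma(\psi(\bar x)+\psi(\bar y))\le C\) and \(|\bar x-\bar y|^2/\varepsilon\le C\). Combining these with the uniform continuity of \(v_1\) or \(v_2\) and the maximality \(\Phi(\bar x,\bar y)\ge\Phi(\bar x,\bar x)\) yields \(|\bar x-\bar y|^2/\varepsilon\le 2\omega_{v_2}(|\bar x-\bar y|)\to0\) as \(\varepsilon\to0\), uniformly in \(\gamma\).

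Next we apply the viscosity inequalities. With \(p:=(\bar x-\bar y)/\varepsilon\), the subsolution test at \(\bar x\) uses gradient \(p+\gamma D\psi(\bar x)\), and the supersolution test at \(\bar y\) uses gradient \(p-\gamma D\psi(\bar y)\). Subtracting gives
\[
  \rho\bigl(v_1(\bar x)-v_2(\bar y)\bigr)\le H(\bar x,p+\gamma D\psi(\bar x))-H(\bar y,p-\gamma D\psi(\bar y)).
\]
We split the right-hand side into three terms using Lemma~\ref{lem:H-structure}. Two of them are \(p\)-perturbations, each bounded by \(C_F(1+|\cdot|)\gamma|D\psi(\cdot)|\le 2C_F\gamma\) thanks to the logarithmic weight. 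The remaining term \(H(\bar x,p)-H(\bar y,p)\) is bounded by \(\omega_c(|\bar x-\bar y|)+L_F|\bar x-\bar y|^2/\varepsilon\) via \eqref{eq:H-x-module}. Since \(v_1(\bar x)-v_2(\bar y)\ge\sigma/2\), we obtain
\[
  \rho\sigma/2\le 4C_F\gamma+\omega_c(|\bar x-\bar y|)+L_F|\bar x-\bar y|^2/\varepsilon.
\]
Fixing \(\gamma\) small so that \(4C_F\gamma<\rho\sigma/4\) and then sending \(\varepsilon\to0\), which drives both remaining terms to zero uniformly in \(\gamma\), gives a contradiction.

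The main obstacle is this ordering of limits and the uniformity of the estimate \(|\bar x-\bar y|^2/\varepsilon\to0\) independently of \(\gamma\). Since \(\bar x,\bar y\) may escape to infinity as \(\gamma\to0\), we must use global uniform continuity of \(v_i\) (they are in \(\BUC\)) rather than local moduli. Likewise, the \(x\)-modulus \(\omega_c\) in \eqref{eq:H-x-module} is global, and the growth factor \((1+|x|)\) enters only through the \(p\)-perturbation, where the logarithm absorbs it. This is exactly why \(\log(1+|x|^2)\) is used instead of \(|x|^2\).

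Uniqueness follows immediately: two \(\BUC\) solutions are each sub- and supersolutions, so comparison applied both ways gives equality. By Theorem~\ref{thm:hji}, \(V^-\) and \(V^+\) are \(\BUC\) solutions of \eqref{eq:HJ-lower} and \eqref{eq:HJ-upper} respectively, so they are the unique ones.
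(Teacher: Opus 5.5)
Your proposal is correct and follows essentially the same route as the paper: logarithmic localization by $\log(1+|x|^2)$, doubling of variables, a three-term split of the Hamiltonian difference using the structure estimates of Lemma~\ref{lem:H-structure}, and the limit order $\varepsilon\downarrow0$ with $\gamma$ fixed, then $\gamma\downarrow0$. There are two harmless slips. First, $(1+|x|)|D\psi(x)|$ has supremum $1+\sqrt2$, attained at $|x|=1+\sqrt2$, not $2$; the paper uses the bound $3$. Second, the inequality $\Phi(\bar x,\bar y)\ge\Phi(\bar x,\bar x)$ actually gives $|\bar x-\bar y|^2/\varepsilon\le 2\omega_{v_2}(|\bar x-\bar y|)+2\gamma|\bar x-\bar y|$. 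The extra term still vanishes because $|\bar x-\bar y|=O(\sqrt{\varepsilon})$.
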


\begin{proof}
We use the classical comparison structure estimates
\citep{BardiCapuzzo,Ishii1984,CIL}, doubling variables and localizing
with a logarithmic penalty. Fix \(H\in\{H^+,H^-\}\), and let
\(v_1,v_2\in\BUC(\R^d)\) be a viscosity subsolution and supersolution,
respectively, of
\(\rho v-H(x,Dv)=0\) on \(\R^d\); \(\omega_2\) denotes a (bounded,
nondecreasing) modulus of continuity of \(v_2\), and
\(K:=\|v_1\|_\infty+\|v_2\|_\infty\). The proof uses continuity of \(H\)
and the structure estimates \eqref{eq:H2-module} and \eqref{eq:H-p-module}.

The localization function is
\begin{equation*}
  g(x):=\log\bigl(1+|x|^2\bigr),
\end{equation*}
which satisfies \(g\in C^\infty(\R^d)\), \(g\ge0\), \(g(0)=0\),
\(g(x)\to\infty\) as \(|x|\to\infty\), and
\begin{equation}\label{eq:g-gradient}
  |Dg(x)|=\frac{2|x|}{1+|x|^2}\le1,
  \qquad
  (1+|x|)\,|Dg(x)|=\frac{2|x|+2|x|^2}{1+|x|^2}\le3,
  \qquad x\in\R^d.
\end{equation}
By \eqref{eq:H-p-module}, the gradient perturbation contributes at most
\(C_F(1+|x|)\,\beta|Dg(x)|\). The second bound in \eqref{eq:g-gradient}
therefore makes this error uniformly of order \(\beta\).

Suppose, for contradiction, that
\(M:=\sup_{\R^d}(v_1-v_2)>0\). For \(\varepsilon\in(0,1]\) and
\(\beta\in(0,1]\), define
\begin{equation}\label{eq:doubling}
  \Phi_{\varepsilon,\beta}(x,y)
  :=v_1(x)-v_2(y)-\frac{|x-y|^2}{2\varepsilon}
  -\beta\bigl(g(x)+g(y)\bigr),
  \qquad (x,y)\in\R^d\times\R^d.
\end{equation}
Boundedness of \(v_1,v_2\) and coercivity of \(g\) imply that the continuous
function \(\Phi_{\varepsilon,\beta}\) attains a maximum at
\((\bar x,\bar y)\). Set
\(M_{\varepsilon,\beta}:=\Phi_{\varepsilon,\beta}(\bar x,\bar y)\) and
\[
  M_\beta:=\sup_{x\in\R^d}\Phi_{\varepsilon,\beta}(x,x)
  =\sup_{x\in\R^d}\bigl[v_1(x)-v_2(x)-2\beta g(x)\bigr]
  \le M_{\varepsilon,\beta}.
\]

\smallskip
\noindent\textbf{Step 1 (\(M_\beta\to M\)).}
Clearly \(M_\beta\le M\) since \(g\ge0\). For \(k\in\N\), pick \(x_k\) with
\((v_1-v_2)(x_k)\ge M-1/k\); then \(M_\beta\ge M-1/k-2\beta g(x_k)\ge M-2/k\)
for all \(\beta\) small enough. Hence \(M_\beta\uparrow M\) as
\(\beta\downarrow0\).

\smallskip
\noindent\textbf{Step 2 (penalization estimates).}
From \(\Phi_{\varepsilon,\beta}(\bar x,\bar y)\ge
\Phi_{\varepsilon,\beta}(0,0)=v_1(0)-v_2(0)\ge-K\) and \(g(0)=0\),
\[
  \frac{|\bar x-\bar y|^2}{2\varepsilon}
  +\beta\bigl(g(\bar x)+g(\bar y)\bigr)
  \le v_1(\bar x)-v_2(\bar y)+K\le2K,
\]
so that, uniformly in \(\beta\in(0,1]\),
\begin{equation}\label{eq:xy-close}
  |\bar x-\bar y|\le r_\varepsilon:=2\sqrt{K\varepsilon}.
\end{equation}
Next, from \(\Phi_{\varepsilon,\beta}(\bar x,\bar y)\ge
\Phi_{\varepsilon,\beta}(\bar x,\bar x)\), the fact that \(g\) is
\(1\)-Lipschitz (by \eqref{eq:g-gradient}), and \(\beta\le1\),
\begin{equation}\label{eq:refined}
  \frac{|\bar x-\bar y|^2}{2\varepsilon}
  \le v_2(\bar x)-v_2(\bar y)+\beta\bigl(g(\bar x)-g(\bar y)\bigr)
  \le \omega_2(|\bar x-\bar y|)+|\bar x-\bar y|
  \le \omega_2(r_\varepsilon)+r_\varepsilon.
\end{equation}
Writing \(p_\varepsilon:=(\bar x-\bar y)/\varepsilon\), estimates
\eqref{eq:xy-close}--\eqref{eq:refined} give
\begin{equation}\label{eq:sigma-def}
  (1+|p_\varepsilon|)\,|\bar x-\bar y|
  \le|\bar x-\bar y|+\frac{|\bar x-\bar y|^2}{\varepsilon}
  \le 3r_\varepsilon+2\omega_2(r_\varepsilon)=:\sigma(\varepsilon)
  \xrightarrow[\varepsilon\downarrow0]{}0,
\end{equation}
uniformly in \(\beta\in(0,1]\).

\smallskip
\noindent\textbf{Step 3 (viscosity inequalities).}
The map \(x\mapsto\Phi_{\varepsilon,\beta}(x,\bar y)\) attains a global
maximum at \(\bar x\); equivalently, \(v_1-\varphi_1\) attains a maximum at
\(\bar x\) with the \(C^1\) test function
\(\varphi_1(x):=|x-\bar y|^2/(2\varepsilon)+\beta g(x)+\mathrm{const}\), whose
gradient is \(D\varphi_1(\bar x)=p_\varepsilon+\beta Dg(\bar x)\). The
subsolution property gives
\[
  \rho v_1(\bar x)\le H\bigl(\bar x,\,p_\varepsilon+\beta Dg(\bar x)\bigr).
\]
Similarly, \(y\mapsto\Phi_{\varepsilon,\beta}(\bar x,y)\) attains a maximum at
\(\bar y\), i.e., \(v_2-\psi\) attains a \emph{minimum} at \(\bar y\) with
\(\psi(y):=-|\bar x-y|^2/(2\varepsilon)-\beta g(y)+\mathrm{const}\) and
\(D\psi(\bar y)=p_\varepsilon-\beta Dg(\bar y)\); the supersolution property
gives
\[
  \rho v_2(\bar y)\ge H\bigl(\bar y,\,p_\varepsilon-\beta Dg(\bar y)\bigr).
\]
Subtracting and inserting intermediate terms,
\[
\begin{aligned}
  \rho\bigl(v_1(\bar x)-v_2(\bar y)\bigr)
  &\le
  \bigl[H(\bar x,p_\varepsilon+\beta Dg(\bar x))-H(\bar x,p_\varepsilon)\bigr]
  +\bigl[H(\bar x,p_\varepsilon)-H(\bar y,p_\varepsilon)\bigr]\\
  &\qquad
  +\bigl[H(\bar y,p_\varepsilon)-H(\bar y,p_\varepsilon-\beta Dg(\bar y))\bigr]\\
  &\le
  C_F(1+|\bar x|)\,\beta|Dg(\bar x)|
  +C_F(1+|\bar y|)\,\beta|Dg(\bar y)|\\
  &\qquad+\omega_1\bigl((1+|p_\varepsilon|)|\bar x-\bar y|\bigr)\\
  &\le
  6C_F\beta+\omega_1\bigl(\sigma(\varepsilon)\bigr),
\end{aligned}
\]
where we used \eqref{eq:H-p-module} for the first and third brackets, \eqref{eq:H2-module} for the second, and
\eqref{eq:g-gradient}, \eqref{eq:sigma-def} in the last line.

\smallskip
\noindent\textbf{Step 4 (conclusion).}
Since the penalization terms in \eqref{eq:doubling} are nonnegative,
\[
  v_1(\bar x)-v_2(\bar y)\ \ge\ \Phi_{\varepsilon,\beta}(\bar x,\bar y)
  =M_{\varepsilon,\beta}\ \ge\ M_\beta,
\]
and therefore
\[
  \rho M_\beta\ \le\ 6C_F\beta+\omega_1\bigl(\sigma(\varepsilon)\bigr).
\]
Letting \(\varepsilon\downarrow0\) with \(\beta\) fixed (using
\eqref{eq:sigma-def} and \(\omega_1(0+)=0\)) yields
\(\rho M_\beta\le6C_F\beta\); letting \(\beta\downarrow0\) and using Step 1
yields \(\rho M\le0\), contradicting \(M>0\) and \(\rho>0\). Hence
\(v_1\le v_2\) on \(\R^d\).
\end{proof}

\begin{remark}
Comparison holds for every \(\rho>0\), independently of \(L_F\) and
\(C_F\). Only the supersolution requires uniform continuity, through
\(\omega_2\) in \eqref{eq:refined}. For the subsolution, boundedness and
continuity suffice for \eqref{eq:xy-close} and attainment of the maximum
in \eqref{eq:doubling}.
\end{remark}

\begin{proposition}\label{prop:ordering}
Under Assumption~\ref{ass:standing},
\begin{equation*}
  V^-(x)\le V^+(x),\qquad x\in\R^d.
\end{equation*}
\end{proposition}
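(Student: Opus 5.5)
The plan is to deduce the ordering from the comparison principle (Theorem~\ref{thm:comparison}) combined with the Hamiltonian inequality \eqref{eq:H-minimax}. No direct strategy argument is attempted, since Remark~\ref{rem:no-ordering} notes that the elementary minimax inequality does not apply. By Theorem~\ref{thm:hji}, \(V^-\in\BUC(\R^d)\) is a viscosity solution of \(\rho v-H^+(x,Dv)=0\), and \(V^+\in\BUC(\R^d)\) is a viscosity solution of \(\rho v-H^-(x,Dv)=0\).

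The key step is to show that \(V^-\) is also a viscosity subsolution of the \(H^-\)-equation \eqref{eq:HJ-upper}. Let \(\varphi\in C^1(\R^d)\) be such that \(V^--\varphi\) has a local maximum at \(x_0\). The subsolution property for \eqref{eq:HJ-lower} gives \(\rho\varphi(x_0)\le H^+(x_0,D\varphi(x_0))\). By \eqref{eq:H-minimax}, \(H^+(x_0,D\varphi(x_0))\le H^-(x_0,D\varphi(x_0))\). Hence \(\rho\varphi(x_0)-H^-(x_0,D\varphi(x_0))\le0\), which is exactly the subsolution inequality for \eqref{eq:HJ-upper}.

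Next, apply Theorem~\ref{thm:comparison} with \(H=H^-\), taking \(v_1=V^-\) as the subsolution and \(v_2=V^+\) as the supersolution. Both functions lie in \(\BUC(\R^d)\) by Theorem~\ref{thm:hji}, and the comparison theorem holds for either Hamiltonian under Assumption~\ref{ass:standing}. It yields \(V^-\le V^+\) on \(\R^d\).

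No step is a genuine obstacle. The only points to check are that the direction of \eqref{eq:H-minimax} matches the Elliott--Kalton convention used here, namely that \(V^-\) pairs with \(H^+=\sup\inf\), and that the comparison theorem needs uniform continuity only for the supersolution \(V^+\), which Lemma~\ref{lem:value-continuity} supplies. One could equally show that \(V^+\) is a supersolution of the \(H^+\)-equation and compare there; the two routes are symmetric.
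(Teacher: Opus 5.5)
Your proof is correct and follows the paper's argument: by \eqref{eq:H-minimax}, \(V^-\) is a viscosity subsolution of the \(H^-\)-equation \eqref{eq:HJ-upper}, and Theorem~\ref{thm:comparison} applied to \(V^-\) and the supersolution \(V^+\) gives \(V^-\le V^+\). Your remarks on the Elliott--Kalton pairing of \(V^-\) with \(H^+\), and on uniform continuity being needed only for the supersolution, are also consistent with the paper.
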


\begin{proof}
By \eqref{eq:H-minimax} and Theorem~\ref{thm:hji}, \(V^-\) is a
subsolution of the equation for \(V^+\); Theorem~\ref{thm:comparison}
gives the ordering; see also Evans and Souganidis~\citep[Cor.~4.2(i)]{EvansSouganidis}.
\end{proof}

\subsection{A Verifiable Isaacs Condition}

We say that the \emph{Isaacs condition} holds if
\begin{equation}\label{eq:isaacs}
  H^+(x,p)=H^-(x,p)=:H(x,p)
  \qquad\text{for all }(x,p)\in\R^d\times\R^d.
\end{equation}
The conditions below permit an application of Sion's minimax theorem
\citep[Thm.~3.4]{Sion}: the Wasserstein ball is convex
(Lemma~\ref{lem:ball-compact}) and the relaxed drift
\eqref{eq:relaxed-drift} is linear in the measure.

\begin{proposition}\label{prop:isaacs}
Let Assumption~\ref{ass:standing} hold, let \(\Dcal\) be convex (e.g.,
\(\Dcal=\Dcal_\theta(\nu)\)), and let \(F\) be induced by \(f\) as in
\eqref{eq:relaxed-drift}, so that \(\mu\mapsto F(x,u,\mu)\) is affine.
Suppose in addition that \(U\) is convex and that, for every
\((x,p,\mu)\in\R^d\times\R^d\times\Dcal\), the map
\begin{equation*}
  u\;\longmapsto\; c(x,u)+p\cdot F(x,u,\mu)
\end{equation*}
is quasiconvex on \(U\). (A sufficient condition: \(c(x,\cdot)\) is convex and
\(f(x,\cdot,w)\) is affine for every \(w\).) Then the Isaacs condition
\eqref{eq:isaacs} holds.
\end{proposition}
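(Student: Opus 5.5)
We fix \((x,p)\in\R^d\times\R^d\) and apply Sion's minimax theorem to
\[
  \Lambda(u,\mu):=c(x,u)+p\cdot F(x,u,\mu),\qquad (u,\mu)\in U\times\Dcal.
\]
Sion's theorem asks for four properties. The minimizing set \(U\) should be compact and convex. The maximizing set \(\Dcal\) should be convex. The function \(\Lambda(\cdot,\mu)\) should be lower semicontinuous and quasiconvex on \(U\) for each \(\mu\). The function \(\Lambda(u,\cdot)\) should be upper semicontinuous and quasiconcave on \(\Dcal\) for each \(u\). Given these, \(\inf_{u}\sup_{\mu}\Lambda=\sup_{\mu}\inf_{u}\Lambda\), which is exactly \(H^-(x,p)=H^+(x,p)\) by \eqref{eq:Hplus}--\eqref{eq:Hminus}. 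Only one compact side is needed, and we may take it to be \(U\) by (A1). Compactness of \(\Dcal\) from (A1) is available but not required.

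\textbf{Step 1: the hypotheses of Sion's theorem.}
\begin{enumerate}[label=(\roman*)]
  \item \(U\) is compact by (A1) and convex by assumption.
  \item \(\Dcal\) is convex by assumption; for \(\Dcal_\theta(\nu)\) this is Lemma~\ref{lem:ball-compact}. We regard \(\Dcal\) as a convex subset of the vector space of finite signed measures, with the weak topology it carries in (A1).
  \item Continuity in both variables follows from the joint continuity of \(F\) in (A2) and of \(c\) in (A3). This gives the required semicontinuity.
  \item Quasiconvexity in \(u\) is assumed in the statement.
  \item For quasiconcavity in \(\mu\), note that \(\mu\mapsto F(x,u,\mu)=\int f\,\dd\mu\) is affine on convex combinations, by linearity of the integral. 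The integral is finite because \(\Dcal\) has bounded \(r\)-moments and \(F\) is defined on \(\Dcal\) by (A2). Hence \(\Lambda(u,\cdot)\) is affine, and so it is both quasiconcave and quasiconvex.
\end{enumerate}

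\textbf{Step 2: the parenthetical sufficient condition.} Suppose \(c(x,\cdot)\) is convex and \(f(x,\cdot,w)\) is affine in \(u\) for every \(w\). Integrating in \(w\) shows that \(u\mapsto F(x,u,\mu)\) is affine. Then \(p\cdot F\) is affine in \(u\), so \(\Lambda(\cdot,\mu)\) is convex and hence quasiconvex.

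\textbf{Main obstacle.} The main point is purely topological: Sion's theorem must be applied in topological vector spaces, and \(\Dcal\) lives in a space of measures. We handle this by embedding \(\Dcal\) in the space of finite signed measures with the weak (narrow) topology, which is a linear topological space. Sion only requires convexity of \(\Dcal\) in that linear space, together with semicontinuity of \(\Lambda(u,\cdot)\) in the relative topology, and we have both. No further difficulty is expected. Since \((x,p)\) was arbitrary, the Isaacs condition \eqref{eq:isaacs} holds.
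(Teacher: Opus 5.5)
Your proposal is correct and follows the paper's argument: fix \((x,p)\) and apply Sion's minimax theorem to \(c(x,u)+p\cdot F(x,u,\mu)\) on \(U\times\Dcal\), using continuity, quasiconvexity in \(u\), and affinity (hence quasiconcavity) in \(\mu\). Your remarks that compactness of \(U\) alone suffices and that \(\Dcal\) sits as a convex subset of the finite signed measures with the weak topology only spell out details the paper leaves implicit.
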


\begin{proof}
For fixed \((x,p)\), the payoff \(c(x,u)+p\cdot F(x,u,\mu)\) is
continuous on the compact convex product \(U\times\Dcal\), quasiconvex in
\(u\), and affine, hence quasiconcave, in \(\mu\). Sion's theorem therefore
interchanges the infimum and supremum.
\end{proof}

\begin{corollary}\label{cor:value}
Let Assumption~\ref{ass:standing} hold and suppose the Isaacs condition
\eqref{eq:isaacs} holds (e.g., under the hypotheses of
Proposition~\ref{prop:isaacs}). Then
\begin{equation*}
  V^-(x)=V^+(x)=:V(x),\qquad x\in\R^d,
\end{equation*}
and \(V\in \BUC(\R^d)\) is the unique bounded uniformly continuous viscosity
solution of
\begin{equation}\label{eq:HJ-value}
  \rho V(x)-H(x,DV(x))=0.
\end{equation}
\end{corollary}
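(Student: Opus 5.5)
The plan is to reduce Corollary~\ref{cor:value} to Theorem~\ref{thm:hji} and the comparison principle, Theorem~\ref{thm:comparison}. Under the Isaacs condition \eqref{eq:isaacs}, the Hamiltonians $H^+$ and $H^-$ coincide with a single function $H$. So equations \eqref{eq:HJ-lower} and \eqref{eq:HJ-upper} are literally the same equation \eqref{eq:HJ-value}.

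By Theorem~\ref{thm:hji}, both $V^-$ and $V^+$ are bounded uniformly continuous viscosity solutions of \eqref{eq:HJ-value}. I would then apply Theorem~\ref{thm:comparison}, with $H=H^+=H^-$, twice.

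\begin{itemize}
\item Taking $v_1=V^+$ as the subsolution and $v_2=V^-$ as the supersolution gives $V^+\le V^-$.
\item Reversing the roles gives $V^-\le V^+$; this direction is also Proposition~\ref{prop:ordering}.
\end{itemize}

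Hence $V^-=V^+=:V$, and $V\in\BUC(\R^d)$ because both values are.

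For uniqueness, I would take any $w\in\BUC(\R^d)$ solving \eqref{eq:HJ-value} and compare it with $V$ in both directions using Theorem~\ref{thm:comparison}. This gives $w=V$.

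The only point needing care is that comparison is applicable to the common Hamiltonian $H$. Theorem~\ref{thm:comparison} is stated for $H\in\{H^+,H^-\}$, and $H$ equals both, so its structure estimates (Lemma~\ref{lem:H-structure}) hold verbatim. Nothing else is needed.

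For the parenthetical ``e.g.'' clause, the Isaacs condition under the hypotheses of Proposition~\ref{prop:isaacs} follows directly from that proposition.
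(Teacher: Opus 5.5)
Your proposal is correct and follows the paper's own proof. Under the Isaacs condition both values solve the same equation by Theorem~\ref{thm:hji}, and Theorem~\ref{thm:comparison}, applied in both directions with $H=H^+=H^-$, gives $V^-=V^+$ and uniqueness of the $\BUC$ solution; your remark that comparison applies because the common $H$ belongs to $\{H^+,H^-\}$ makes explicit a point the paper leaves implicit.
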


\begin{proof}
Both values solve \eqref{eq:HJ-value} by Theorem~\ref{thm:hji}; comparison
gives equality and uniqueness; see also Evans and Souganidis~\citep[Cor.~4.2(ii)]{EvansSouganidis}.
\end{proof}

\section{From Sampled DRO to the Continuous-Time Game}\label{sec:scheme}

We first discretize the averaged dynamics, then compare their limit with
that of the sampled recursion. For a step size \(\Delta\in(0,1]\), define
\begin{equation}\label{eq:TDelta}
  (T_\Delta v)(x)
  :=\inf_{u\in U}\sup_{\mu\in\Dcal}
  \bigl\{\Delta\,c(x,u)+e^{-\rho\Delta}\,v\bigl(x+\Delta F(x,u,\mu)\bigr)\bigr\},
  \qquad v\in \BUC(\R^d).
\end{equation}
This operator evaluates the value at the averaged next state; the sampled
recursion of Yang~\citep{YangDRO}, considered below, averages it over random
next states.

\begin{proposition}\label{prop:scheme-fixed-point}
Under Assumption~\ref{ass:standing}, for every \(\Delta\in(0,1]\):
\begin{enumerate}[label=(\roman*)]
  \item \(T_\Delta\) maps \(\BUC(\R^d)\) into itself, is monotone
  (\(v\le w\Rightarrow T_\Delta v\le T_\Delta w\)), and is a contraction with
  factor \(e^{-\rho\Delta}\) in the supremum norm. It therefore has a unique
  fixed point \(v_\Delta\in \BUC(\R^d)\), and
  \begin{equation}\label{eq:vDelta-bound}
    0\le v_\Delta\le
    \frac{\Delta M_c}{1-e^{-\rho\Delta}}\le \frac{e^{\rho}M_c}{\rho}.
  \end{equation}
  \item The family \(\{v_\Delta\}_{\Delta\in(0,1]}\) is equi-uniformly
  continuous:
  \begin{equation}\label{eq:vDelta-modulus}
    |v_\Delta(x)-v_\Delta(y)|\le e^{\rho}\,\omega_J(|x-y|),
    \qquad x,y\in\R^d,
  \end{equation}
  with \(\omega_J\) as in \eqref{eq:omega-J}.
\end{enumerate}
\end{proposition}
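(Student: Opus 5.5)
The plan is to treat the two parts separately: (i) by elementary sup/inf estimates followed by Banach's fixed point theorem, and (ii) by finding a closed, $T_\Delta$-invariant class of functions sharing an explicit modulus. Throughout, fix $\Delta\in(0,1]$ and write $\Phi(x,u,\mu):=x+\Delta F(x,u,\mu)$. By (A2),
\[
  |\Phi(x,u,\mu)-\Phi(y,u,\mu)|\le(1+L_F\Delta)|x-y|\le e^{L_F\Delta}|x-y|
\]
uniformly in $(u,\mu)\in U\times\Dcal$. This one-step discrete analogue of \eqref{eq:state-lip-initial} drives everything that follows.

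\emph{Part (i).} Let $v\in\BUC(\R^d)$ with modulus $\omega_v$. Each map $x\mapsto \Delta c(x,u)+e^{-\rho\Delta}v(\Phi(x,u,\mu))$ is bounded. It also has the common modulus $\Delta\omega_c(r)+\omega_v(e^{L_F\Delta}r)$, uniformly in $(u,\mu)$. Infima and suprema of a family sharing a modulus inherit it, so $T_\Delta v\in\BUC(\R^d)$.

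Monotonicity is immediate. For the contraction, the payoffs for $v$ and $w$ differ by at most $e^{-\rho\Delta}\|v-w\|_\infty$ for every $(u,\mu)$, and this bound passes through $\inf_u\sup_\mu$. Banach's theorem on the complete space $\BUC(\R^d)$ then gives the unique fixed point $v_\Delta$.

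For \eqref{eq:vDelta-bound}, set $C:=\Delta M_c/(1-e^{-\rho\Delta})$. The order interval $\{0\le v\le C\}$ is closed and mapped into itself, since $0\le T_\Delta v\le \Delta M_c+e^{-\rho\Delta}C=C$. Iterating from $v\equiv0$ therefore keeps $v_\Delta$ in this interval. The last inequality follows from $1-e^{-x}\ge xe^{-x}$, which gives $C\le e^{\rho\Delta}M_c/\rho\le e^{\rho}M_c/\rho$.

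\emph{Part (ii).} Define the discrete modulus
\[
  \widetilde\omega_\Delta(r):=\sum_{k=0}^\infty \Delta e^{-\rho k\Delta}\,\omega_c\bigl(e^{L_Fk\Delta}r\bigr).
\]
It is finite because $\omega_c$ is bounded, and it satisfies the exact identity
\[
  \widetilde\omega_\Delta(r)=\Delta\omega_c(r)+e^{-\rho\Delta}\widetilde\omega_\Delta(e^{L_F\Delta}r).
\]
Let $\mathcal K:=\{v\in\BUC: |v(x)-v(y)|\le\widetilde\omega_\Delta(|x-y|)\}$. By the estimate of Part (i) and the identity above, $T_\Delta$ maps $\mathcal K$ into itself. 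The set $\mathcal K$ is closed under uniform limits and contains $0$, so the Picard iterates $T_\Delta^n0\to v_\Delta$ stay in $\mathcal K$, and hence $v_\Delta\in\mathcal K$.

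It then remains to compare $\widetilde\omega_\Delta$ with $\omega_J$ from \eqref{eq:omega-J}. For $s\in[k\Delta,(k+1)\Delta)$ we have
\[
  e^{-\rho k\Delta}\le e^{\rho\Delta}e^{-\rho s}\le e^{\rho}e^{-\rho s},
  \qquad
  \omega_c(e^{L_Fk\Delta}r)\le\omega_c(e^{L_Fs}r),
\]
the latter because $\omega_c$ is nondecreasing. Hence each summand is at most $e^\rho\int_{k\Delta}^{(k+1)\Delta}e^{-\rho s}\omega_c(e^{L_Fs}r)\,\dd s$. Summing over $k$ gives $\widetilde\omega_\Delta\le e^\rho\omega_J$, which is \eqref{eq:vDelta-modulus}.

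The only step I expect to need care is choosing $\widetilde\omega_\Delta$ so that the invariance identity is exact rather than merely an inequality. This is what avoids any loss that would otherwise accumulate over iterations. The Riemann-sum comparison uses only monotonicity of $\omega_c$ and $\Delta\le1$.
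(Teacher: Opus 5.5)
Your proof is correct and is essentially the paper's argument: the same one-step modulus recursion for \(T_\Delta\), the same invariant order interval for \eqref{eq:vDelta-bound}, and the same Riemann-sum comparison of \(\Delta\sum_k e^{-\rho k\Delta}\omega_c(e^{L_Fk\Delta}r)\) with \(e^{\rho\Delta}\omega_J(r)\). The only difference is bookkeeping in (ii): you close the iteration with the \(T_\Delta\)-invariant class \(\mathcal K\) and Picard iterates from \(0\), which the paper itself does for the sampled operator, whereas the paper's proof of this proposition iterates the recursion \(n\) times on the fixed point's own modulus and discards the remainder \(e^{-\rho n\Delta}\cdot 2\|v_\Delta\|_\infty\); note also that your invariance step needs the factor \(e^{-\rho\Delta}\) in front of \(\omega_v\), which your Part (i) modulus dropped but which follows from the same one-line computation.
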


\begin{proof}
(i) Monotonicity and the contraction property are immediate from
\eqref{eq:TDelta}; if \(0\le v\le K\) then \(0\le T_\Delta v\le\Delta
M_c+e^{-\rho\Delta}K\), so the fixed point satisfies
\(v_\Delta\le\Delta M_c/(1-e^{-\rho\Delta})\), and
\(\Delta/(1-e^{-\rho\Delta})\le e^{\rho\Delta}/\rho\le e^{\rho}/\rho\)
by \(1-e^{-t}\ge te^{-t}\). That \(T_\Delta\) preserves \(\BUC\) follows from
\begin{equation}\label{eq:modulus-recursion}
  \omega_{T_\Delta v}(s)\le
  \Delta\,\omega_c(s)+e^{-\rho\Delta}\,\omega_v\bigl((1+\Delta L_F)s\bigr),
\end{equation}
where \(\omega_g\) denotes a modulus of \(g\); indeed
\(|x+\Delta F(x,u,\mu)-y-\Delta F(y,u,\mu)|\le(1+\Delta L_F)|x-y|\) by (A2).
Banach's fixed point theorem in the closed subspace \(\BUC(\R^d)\) of
\(C_b(\R^d)\) gives \(v_\Delta\).

(ii) Applying \eqref{eq:modulus-recursion} to the fixed point \(n\) times,
\[
  \omega_{v_\Delta}(s)
  \le \Delta\sum_{k=0}^{n-1}e^{-\rho k\Delta}
  \omega_c\bigl((1+\Delta L_F)^ks\bigr)
  +e^{-\rho n\Delta}\cdot 2\|v_\Delta\|_\infty.
\]
The last term tends to zero as \(n\to\infty\). On each interval
\([k\Delta,(k+1)\Delta]\), we have
\(e^{-\rho\tau}\ge e^{-\rho\Delta}e^{-\rho k\Delta}\) and
\(\omega_c(e^{L_F\tau}s)\ge\omega_c(e^{L_Fk\Delta}s)\).
Together with \((1+\Delta L_F)^k\le e^{L_Fk\Delta}\), these inequalities
bound the sum by an integral:
\[
  \omega_{v_\Delta}(s)
  \le e^{\rho\Delta}\int_0^\infty e^{-\rho\tau}
  \omega_c\bigl(e^{L_F\tau}s\bigr)\dd\tau
  =e^{\rho\Delta}\,\omega_J(s)\le e^{\rho}\,\omega_J(s).
\]
\end{proof}

\begin{theorem}\label{thm:scheme}
Under Assumption~\ref{ass:standing},
\begin{equation*}
  v_\Delta\longrightarrow V^+
  \qquad\text{locally uniformly on }\R^d\text{ as }\Delta\downarrow0.
\end{equation*}
If the Isaacs condition \eqref{eq:isaacs} holds, then \(v_\Delta\to V\), the
value of the game. The analogous scheme with the order of optimization
reversed, \(\sup_\mu\inf_u\), converges to \(V^-\).
\end{theorem}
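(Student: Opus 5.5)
We follow the Barles--Souganidis framework: the scheme is monotone, stable, and consistent, and the limit equation enjoys the strong comparison of Theorem~\ref{thm:comparison}. Here, however, Proposition~\ref{prop:scheme-fixed-point} already provides something stronger than stability. The family \(\{v_\Delta\}\) is uniformly bounded by \(e^\rho M_c/\rho\) and equi-uniformly continuous with the common modulus \(e^\rho\omega_J\). So we can argue by compactness instead of using half-relaxed limits.

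By Arzel\`a--Ascoli and a diagonal argument on balls, every sequence \(\Delta_n\downarrow0\) has a subsequence along which \(v_{\Delta_n}\to v\) locally uniformly. The limit \(v\) lies in \(\BUC(\R^d)\), with modulus \(e^\rho\omega_J\) and the same bound. It then suffices to show that every such limit is a viscosity solution of \eqref{eq:HJ-upper}. Theorem~\ref{thm:comparison} then forces \(v=V^+\), and uniqueness of the subsequential limit gives convergence of the whole family.

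\textbf{Subsolution property.} Let \(\varphi\in C^1\) and suppose \(v-\varphi\) has a strict local maximum at \(x_0\); we may assume this after adding \(|x-x_0|^4\). Local uniform convergence yields points \(x_n\to x_0\) at which \(v_{\Delta_n}-\varphi\) attains a local maximum over a fixed closed ball \(\bar B\). Set \(\xi_n:=v_{\Delta_n}(x_n)-\varphi(x_n)\), so that \(v_{\Delta_n}\le\varphi+\xi_n\) on \(\bar B\). For \(n\) large, every point \(x_n+\Delta_nF(x_n,u,\mu)\) lies in \(\bar B\), because \(|F|\le C_F(1+|x_n|)\) is bounded near \(x_0\). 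Monotonicity of \(T_\Delta\) and the fixed-point identity then give
\[
\varphi(x_n)+\xi_n\le \inf_{u}\sup_{\mu}\bigl\{\Delta_n c(x_n,u)+e^{-\rho\Delta_n}\bigl(\varphi(x_n+\Delta_nF(x_n,u,\mu))+\xi_n\bigr)\bigr\}.
\]
Subtract \(e^{-\rho\Delta_n}(\varphi(x_n)+\xi_n)\) from both sides and divide by \(\Delta_n\). A first-order Taylor expansion of \(\varphi\) holds with an error \(o(1)\) that is uniform over \((u,\mu)\in U\times\Dcal\), since \(D\varphi\) is uniformly continuous near \(x_0\) and \(F\) is bounded there. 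This gives
\[
\rho(\varphi(x_n)+\xi_n)+o(1)\le\inf_u\sup_\mu\{c(x_n,u)+e^{-\rho\Delta_n}D\varphi(x_n)\cdot F(x_n,u,\mu)\}+o(1).
\]
Since \(\xi_n\to0\), passing to the limit with Lemma~\ref{lem:H-cont} gives \(\rho\varphi(x_0)\le H^-(x_0,D\varphi(x_0))\). The factor \(e^{-\rho\Delta_n}\to1\) is absorbed by \eqref{eq:H-p-module}.

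\textbf{Supersolution property.} The argument at a strict local minimum is symmetric, with the inequalities reversed.

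\textbf{Reversed scheme.} The operator with \(\sup_\mu\inf_u\) has the same monotonicity, contraction property, and modulus recursion \eqref{eq:modulus-recursion}; those proofs never use the order of optimization. Its consistency limit is \(H^+\), so its fixed points converge to \(V^-\) by Theorem~\ref{thm:hji} and Theorem~\ref{thm:comparison}. Under the Isaacs condition, Corollary~\ref{cor:value} identifies both limits with \(V\).

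\textbf{Main obstacle.} The delicate step is consistency on the unbounded state space. We must make sure the max/min point \(x_n\) and all one-step successors stay in a compact set on which the Taylor remainder is uniform in \((u,\mu)\). The linear growth bound on \(F\) combined with localization near \(x_0\) handles this, because \(\Delta_n\to0\) with \(x_n\) bounded. We must also justify replacing \(v_{\Delta_n}\) by \(\varphi+\xi_n\) only at successor points inside \(\bar B\); this is where strictness of the extremum, and hence the existence of the points \(x_n\), is used.
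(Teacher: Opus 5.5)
Your proposal is correct and follows essentially the same route as the paper's proof. Both use compactness from Proposition~\ref{prop:scheme-fixed-point} and Arzel\`a--Ascoli, then localize at a strict extremum using monotonicity and the fixed-point identity (with all successors in the ball), apply a first-order Taylor expansion uniform in \((u,\mu)\), and identify the limit through Theorem~\ref{thm:comparison}; the only differences are cosmetic, namely \(|x-x_0|^4\) versus \(|x-x_0|^2\), and absorbing \(e^{-\rho\Delta_n}\) via \eqref{eq:H-p-module} instead of a separately stated consistency estimate.
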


\begin{proof}
\emph{Step 1 (consistency).} Let \(\varphi\in C^1(\R^d)\) and let
\(E\subset\R^d\) be compact. We claim
\begin{equation}\label{eq:consistency-scheme}
  \sup_{x\in E}
  \Bigl|\frac{\varphi(x)-T_\Delta\varphi(x)}{\Delta}
  -\bigl(\rho\varphi(x)-H^-(x,D\varphi(x))\bigr)\Bigr|
  \xrightarrow[\Delta\downarrow0]{}0.
\end{equation}
Choose \(R_E>0\) such that \(|x|\le R_E\) for every \(x\in E\).
Then \(|F(x,u,\mu)|\le C_F(1+R_E)\) on \(E\), so
\(\varphi(x+\Delta F)=\varphi(x)+\Delta D\varphi(x)\cdot F+\Delta\epsilon_\Delta\)
with \(\epsilon_\Delta\to0\) uniformly on \(E\times U\times\Dcal\) (uniform
continuity of \(D\varphi\) on a neighborhood of \(E\)), and
\(e^{-\rho\Delta}=1-\rho\Delta+O(\Delta^2)\). Substituting into
\eqref{eq:TDelta} and using
\(|\inf\sup A-\inf\sup B|\le\|A-B\|_\infty\),
\[
  T_\Delta\varphi(x)
  =\varphi(x)-\rho\Delta\varphi(x)
  +\Delta\inf_{u}\sup_{\mu}\{c(x,u)+D\varphi(x)\cdot F(x,u,\mu)\}
  +\Delta\,o_\Delta(1)
\]
uniformly on \(E\), which is \eqref{eq:consistency-scheme}.

\emph{Step 2 (compactness).} By
Proposition~\ref{prop:scheme-fixed-point}, the family \(\{v_\Delta\}\) is
equibounded and equi-uniformly continuous. By Arzel\`a--Ascoli and a diagonal
argument, every sequence \(\Delta_n\downarrow0\) admits a subsequence (not
relabeled) with \(v_{\Delta_n}\to v\) locally uniformly, where \(v\) is bounded
with modulus \(e^{\rho}\omega_J\); in particular \(v\in \BUC(\R^d)\).

\emph{Step 3 (the limit solves the PDE).} We show \(v\) is a viscosity
subsolution of \(\rho v-H^-(x,Dv)=0\); the supersolution property is
symmetric. Let \(v-\varphi\) have a local maximum at \(x_0\); replacing
\(\varphi\) by \(\varphi+|x-x_0|^2\), we may assume the maximum is strict on
some ball \(\bar B(x_0,r_0)\) and, as usual, \(v(x_0)=\varphi(x_0)\). Choose
maximizers \(x_n\) of \(v_{\Delta_n}-\varphi\) over \(\bar B(x_0,r_0)\); by
local uniform convergence and strictness, \(x_n\to x_0\) and
\(k_n:=(v_{\Delta_n}-\varphi)(x_n)\to0\). For \(n\) large,
\(\Delta_nC_F(1+|x_0|+r_0)<r_0/2\) and \(|x_n-x_0|<r_0/2\), so
\(x_n+\Delta_nF(x_n,u,\mu)\in\bar B(x_0,r_0)\) for all \((u,\mu)\), and on
that ball \(v_{\Delta_n}\le\varphi+k_n\). Monotonicity of \(T_{\Delta_n}\) and
the fixed-point relation give
\[
  \varphi(x_n)+k_n=v_{\Delta_n}(x_n)=T_{\Delta_n}v_{\Delta_n}(x_n)
  \le T_{\Delta_n}\varphi(x_n)+e^{-\rho\Delta_n}k_n,
\]
hence
\[
  \frac{\varphi(x_n)-T_{\Delta_n}\varphi(x_n)}{\Delta_n}
  \le -k_n\,\frac{1-e^{-\rho\Delta_n}}{\Delta_n}\xrightarrow[n\to\infty]{}0,
\]
since \(k_n\to0\) and \((1-e^{-\rho\Delta})/\Delta\to\rho\). By
\eqref{eq:consistency-scheme} evaluated along \(x_n\to x_0\), together with the
continuity of \(H^-\) (Lemma~\ref{lem:H-cont}) and of \(D\varphi\), the
left-hand side converges to \(\rho\varphi(x_0)-H^-(x_0,D\varphi(x_0))\).
Therefore \(\rho\varphi(x_0)-H^-(x_0,D\varphi(x_0))\le0\).

\emph{Step 4 (identification).} By Steps 2--3, every subsequential limit \(v\)
is a \(\BUC\) viscosity solution of \(\rho v-H^-(x,Dv)=0\). By
Theorem~\ref{thm:hji}, so is \(V^+\), and by Theorem~\ref{thm:comparison} the
\(\BUC\) solution is unique; hence \(v=V^+\) for every subsequential limit, and
the full family converges locally uniformly. Under \eqref{eq:isaacs},
\(V^+=V\) by Corollary~\ref{cor:value}. The statement for the reversed scheme
is proved identically with \(H^+\) and \(V^-\).
\end{proof}

\subsection{The Sampled Recursion Has the Same Limit}

The sampled operator averages the value at the random next state:
\begin{equation}\label{eq:TDelta-sampled}
  (\widetilde T_\Delta v)(x):=\inf_{u\in U}\sup_{\mu\in\Dcal}
  \left\{\Delta c(x,u)+e^{-\rho\Delta}
  \int_W v(x+\Delta f(x,u,w))\,\mu(\dd w)\right\}.
\end{equation}
For unbounded disturbances, we truncate at a threshold for which the probability
of a discarded jump is \(o(\Delta)\) and every retained displacement tends
to zero. The gap \(q_f<r\) permits both estimates
\eqref{eq:tail-K}--\eqref{eq:displacement}.

\begin{theorem}[Convergence of the Sampled Recursion]\label{thm:scheme-sampled}
Let Assumption~\ref{ass:standing} hold, and let \(F\) be induced by a continuous
\(f\) satisfying \eqref{eq:f-lip}--\eqref{eq:f-growth} with
\(q_f,q_L\in[0,r)\). Set \(m=m_r(\Dcal)\) and
\(L_{\mathrm{samp}}=L_f(1+m^{q_L/r})\), with \(m^0=1\).
Choose a bounded, nondecreasing, concave modulus \(\bar\omega_c\) of the
state dependence of \(c\), with \(\bar\omega_c(0)=0\) and
\(\bar\omega_c(h)\to0\) as \(h\downarrow0\), and define
\begin{equation*}
  \widetilde\omega_J(h):=\int_0^\infty e^{-\rho t}
       \bar\omega_c(e^{L_{\mathrm{samp}}t}h)\,\dd t.
\end{equation*}
For every \(\Delta\in(0,1]\), \(\widetilde T_\Delta\) preserves
\(\BUC(\R^d)\), is monotone, and is a contraction with factor
\(e^{-\rho\Delta}\). Its unique fixed point \(\widetilde v_\Delta\)
satisfies \eqref{eq:vDelta-bound} and
\begin{equation}\label{eq:sampled-modulus}
  |\widetilde v_\Delta(x)-\widetilde v_\Delta(y)|
  \le e^\rho\widetilde\omega_J(|x-y|).
\end{equation}
Moreover,
\begin{equation*}
  \widetilde v_\Delta\longrightarrow V^+
  \quad\text{locally uniformly on }\R^d\quad(\Delta\downarrow0).
\end{equation*}
Reversing the order of optimization gives convergence to \(V^-\).
Under the Isaacs condition, both limits equal \(V\).
\end{theorem}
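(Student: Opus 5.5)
The proof follows the Barles--Souganidis template of Theorem~\ref{thm:scheme}: monotonicity and contraction, a uniform modulus giving compactness, consistency with \(\rho v-H^-(x,Dv)\), and identification of every subsequential limit with \(V^+\) via Theorems~\ref{thm:hji} and~\ref{thm:comparison}. Monotonicity and the contraction factor \(e^{-\rho\Delta}\) are immediate from \eqref{eq:TDelta-sampled}, since \(\mu\) is a probability measure. The bound \eqref{eq:vDelta-bound} follows exactly as in Proposition~\ref{prop:scheme-fixed-point}.

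\textbf{Step 1: the modulus.} I will show that \(\widetilde T_\Delta\) preserves \(\BUC\) and derive \eqref{eq:sampled-modulus}. It suffices to work in the invariant class of bounded functions whose modulus \(\omega\) is concave and nondecreasing. For such \(v\), using \eqref{eq:f-lip} and then Jensen's inequality,
\[
\Bigl|\int_W\bigl[v(x+\Delta f(x,u,w))-v(y+\Delta f(y,u,w))\bigr]\mu(\dd w)\Bigr|
\le\int_W\omega\bigl((1+\Delta L_f(1+|w|^{q_L}))s\bigr)\mu(\dd w)
\le\omega\bigl((1+\Delta L_{\mathrm{samp}})s\bigr),
\]
where \(s=|x-y|\). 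The last step uses \(\int|w|^{q_L}\dd\mu\le m^{q_L/r}\). This is why concavity of \(\bar\omega_c\) is required. The recursion \(\omega_{\widetilde T_\Delta v}(s)\le\Delta\bar\omega_c(s)+e^{-\rho\Delta}\omega_v((1+\Delta L_{\mathrm{samp}})s)\) preserves concavity, since a composition of concave nondecreasing functions with a linear map, plus a sum, stays concave. Iterating from the fixed point exactly as in Proposition~\ref{prop:scheme-fixed-point}(ii) then gives \eqref{eq:sampled-modulus}. The family is therefore equibounded and equicontinuous, and Arzel\`a--Ascoli yields locally uniform subsequential limits \(v\in\BUC\).

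\textbf{Step 2: consistency along the sequence.} This step needs care, because \(v_{\Delta_n}\) is evaluated at random points \(x_n+\Delta_n f\) that may lie far from the test ball. I will run Step~3 of Theorem~\ref{thm:scheme} with a global test function. Take \(\varphi\in C^1\) and let \(v-\varphi\) have a strict local maximum at \(x_0\). Modify \(\varphi\) outside \(\bar B(x_0,r_0)\) so that \(v_{\Delta_n}\le\varphi+k_n\) holds globally. This is possible because everything is bounded by \(e^\rho M_c/\rho\): take \(\varphi\) with bounded derivative equal to the original near \(x_0\), and make it large (at least \(2\sup\|v_\Delta\|_\infty\)) away from the ball. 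Monotonicity then gives
\[
\frac{\varphi(x_n)-\widetilde T_{\Delta_n}\varphi(x_n)}{\Delta_n}\le o(1),
\]
and it remains to prove the sampled consistency
\[
\frac{\varphi(x)-\widetilde T_\Delta\varphi(x)}{\Delta}\to\rho\varphi(x)-H^-(x,D\varphi(x))
\]
uniformly on compacts, for \(\varphi\in C^1\) bounded with bounded, uniformly continuous gradient.

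\textbf{Step 3: truncation, the main obstacle.} Choose \(K_\Delta=\Delta^{-\gamma}\) with \(\gamma\in\bigl(1/r,\,1/q_f\bigr)\); this interval is nonempty because \(q_f<r\), and if \(q_f=0\) any \(\gamma>1/r\) works. Markov's inequality gives \(\mu(|w|>K_\Delta)\le mK_\Delta^{-r}=o(\Delta)\), uniformly over \(\Dcal\). The tail is then controlled by boundedness of \(\varphi\): its contribution is \(O(\|\varphi\|_\infty\, m\,\Delta^{\gamma r})=o(\Delta)\). This tail contribution is compared with the corresponding part of the linearized integrand \(\Delta D\varphi\cdot\int_{|w|>K}f\,\dd\mu\), which is \(O(\Delta K^{q_f-r})=o(\Delta)\). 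On \(\{|w|\le K_\Delta\}\), with \(|x|\le R\), the displacement satisfies \(\Delta|f|\le C_f(1+R)\Delta(1+K_\Delta^{q_f})\to0\), since \(\gamma q_f<1\). This is \eqref{eq:displacement}. A Taylor expansion with the modulus of \(D\varphi\) then gives
\[
\int\varphi(x+\Delta f)\,\dd\mu=\varphi(x)+\Delta D\varphi(x)\cdot F(x,u,\mu)+\Delta\,o(1),
\]
where the \(o(1)\) term is bounded by \(\omega_{D\varphi}(\text{displacement})\) times \(\int|f|\,\dd\mu\le C_F(1+R)\). Combining this with \(|\inf\sup A-\inf\sup B|\le\|A-B\|_\infty\) yields consistency. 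The supersolution side is symmetric, with \(\varphi\) made very negative away from the ball.

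\textbf{Step 4: identification.} Every subsequential limit is a \(\BUC\) viscosity solution of \eqref{eq:HJ-upper}, so it equals \(V^+\) by comparison, and the whole family converges. The reversed order of optimization leads to \(H^+\) and \(V^-\) in the same way, and under the Isaacs condition Corollary~\ref{cor:value} identifies both limits with \(V\).
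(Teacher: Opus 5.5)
Your proposal is correct and follows essentially the paper's argument. The common ingredients are a concave modulus carried through Jensen's inequality with \(L_{\mathrm{samp}}\), truncation at \(K_\Delta=\Delta^{-\gamma}\) with \(\gamma\in(1/r,1/q_f)\) (so the Markov tail is \(o(\Delta)\) while retained displacements vanish), and identification of subsequential limits by comparison.

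The differences are minor. You iterate the modulus recursion from the fixed point instead of running value iteration from \(v_0=0\). You also absorb the tail by replacing \(\varphi\) with a bounded \(C^1\) test function so that monotonicity of \(\widetilde T_\Delta\) applies globally; the paper instead keeps \(\varphi\) local and bounds \(\widetilde v_{\Delta_n}\) by \(B=e^\rho M_c/\rho\) on the tail event, which produces the error \((B+|k_n|)p_n\).

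One detail to tighten: you say you modify \(\varphi\) only outside \(\bar B(x_0,r_0)\). Then the transition lies outside the ball, where the strict maximum is not known. Putting the transition inside the ball fixes this, e.g.\ \((1-\zeta)\varphi+C\zeta\) with a cutoff \(\zeta\) vanishing near \(x_0\), equal to one outside the ball, and \(C>2B\). With this choice \(v-\widetilde\varphi<0\) away from \(x_0\). You should then take the maximizers \(x_n\) of \(\widetilde v_{\Delta_n}-\widetilde\varphi\) for the modified test function \(\widetilde\varphi\), not for the original \(\varphi\).
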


\begin{proof}
\emph{Fixed points and a common modulus.}
Every bounded uniformly continuous function admits a bounded concave
majorant of its modulus. For such a modulus \(\eta\) of \(v\), using the
same disturbance in two states and then Jensen's inequality gives
\[
 |\widetilde T_\Delta v(x)-\widetilde T_\Delta v(y)|
 \le\Delta\bar\omega_c(|x-y|)+e^{-\rho\Delta}
       \eta((1+\Delta L_{\mathrm{samp}})|x-y|).
\]
Thus the operator maps \(\BUC\) into itself. Monotonicity and contraction
follow directly from \eqref{eq:TDelta-sampled}, and the invariant interval
argument in Proposition~\ref{prop:scheme-fixed-point} gives
\eqref{eq:vDelta-bound}.
To obtain a modulus uniform in \(\Delta\), start value iteration at
\(v_0=0\), \(v_{n+1}=\widetilde T_\Delta v_n\), and set
\[
 \eta_n(h)=\Delta\sum_{k=0}^{n-1}e^{-\rho k\Delta}
       \bar\omega_c((1+\Delta L_{\mathrm{samp}})^kh).
\]
Each \(\eta_n\) is concave, and induction using the preceding inequality
shows \(|v_n(x)-v_n(y)|\le\eta_n(|x-y|)\).
Uniform convergence of the iteration permits \(n\to\infty\).
Since \((1+\Delta L_{\mathrm{samp}})^k\le e^{L_{\mathrm{samp}}k\Delta}\),
comparison with the integral on each interval
\([k\Delta,(k+1)\Delta]\) yields
\(\eta_\infty(h)\le e^{\rho\Delta}\widetilde\omega_J(h)\).
Dominated convergence gives \(\widetilde\omega_J(h)\to0\) as
\(h\downarrow0\), proving \eqref{eq:sampled-modulus}.

\emph{Truncated consistency.}
Choose \(a\in(1/r,1/q_f)\), with \(1/q_f=\infty\) when \(q_f=0\), and
write \(K_\Delta=\Delta^{-a}\). Markov's inequality and the growth bound give
\begin{align}
 \sup_{\mu\in\Dcal}\mu(|w|>K_\Delta)&\le m\Delta^{ar}=o(\Delta),
       \label{eq:tail-K}\\
 \sup_{|x|\le R,u\in U,|w|\le K_\Delta}\Delta|f(x,u,w)|
 &\le C_f(1+R)\Delta(1+\Delta^{-aq_f})
       =:\delta_\Delta(R)\longrightarrow0.\label{eq:displacement}
\end{align}
Let \(\varphi\in C^1(\R^d)\) and let \(E\subset\bar B_R\) be compact.
On the truncated set the first-order Taylor error, divided by \(\Delta\),
is bounded after integration by
the product of \(\omega_{D\varphi}(\delta_\Delta(R))\) and \(C_f(1+R)(1+m^{q_f/r})\).
The omitted part of the drift is bounded by
\(C_f(1+R)m\) times \(\Delta^{ar}+\Delta^{a(r-q_f)}\), which also vanishes. Hence
\begin{equation}\label{eq:consistency-sampled}
\begin{aligned}
 &\sup_{x\in E,u\in U,\mu\in\Dcal}\left|\frac1\Delta
 \int_{|w|\le K_\Delta}[\varphi(x+\Delta f(x,u,w))-\varphi(x)]\,\dd\mu\right.\\
 &\hspace{35mm}\left.{}-D\varphi(x)\cdot F(x,u,\mu)\right|\longrightarrow0.
\end{aligned}
\end{equation}

\emph{Identification of the limit.}
The common bound and modulus yield locally uniform subsequential limits
\(v\in\BUC\). Suppose \(v-\varphi\) has a strict local maximum at
\(x_0\) on a closed ball, normalized by \(v(x_0)=\varphi(x_0)\).
For a convergent subsequence choose maximizers \(x_n\) of
\(\widetilde v_{\Delta_n}-\varphi\) on that ball and set
\(k_n=\widetilde v_{\Delta_n}(x_n)-\varphi(x_n)\). Then
\(x_n\to x_0\) and \(k_n\to0\). By \eqref{eq:displacement}, all truncated
next states remain in the ball for large \(n\).
Put \(B=e^\rho M_c/\rho\), \(p_n=m\Delta_n^{ar}\), and
\(A_n=\{|w|\le K_{\Delta_n}\}\). Using \eqref{eq:tail-K}, the fixed-point equation,
\(\widetilde v_{\Delta_n}\le\varphi+k_n\) at these next states, and
\(\|\widetilde v_{\Delta_n}\|_\infty\le B\), we obtain
\[
\begin{aligned}
 \varphi(x_n)+k_n\le\inf_u\sup_\mu\bigg\{\Delta_nc(x_n,u)
 &+e^{-\rho\Delta_n}\bigg[\int_{A_n}
 \varphi(x_n+\Delta_nf(x_n,u,w))\,\dd\mu\\
 &\hspace{26mm}+k_n\bigg]\bigg\}+(B+|k_n|)p_n.
\end{aligned}
\]
The term \(|k_n|p_n\) accounts for replacing \(k_n\mu(A_n)\) by \(k_n\)
without a sign assumption. From \eqref{eq:consistency-sampled} and
\(|\varphi(x_n)|\mu(A_n^c)=o(\Delta_n)\), the integral alone equals
\(\varphi(x_n)+\Delta_nD\varphi(x_n)\cdot F(x_n,u,\mu)+o(\Delta_n)\)
uniformly in \((u,\mu)\); \(k_n\) is kept as a separate term.
It follows that
\[
 k_n\frac{1-e^{-\rho\Delta_n}}{\Delta_n}
 \le-\rho\varphi(x_n)+H^-(x_n,D\varphi(x_n))+o(1).
\]
Letting \(n\to\infty\) proves the subsolution inequality. At a local
minimum, the same estimates with the inequalities reversed and the tail
bounded from below give the supersolution inequality. Comparison identifies
every subsequential limit with \(V^+\), proving convergence of the full
family. The reversed operator is treated in the same way with \(H^+\).
\end{proof}

\begin{remark}[Sampled Expansion and Scaling]\label{rem:scheme-interpretation}
The constant \(L_{\mathrm{samp}}\) controls state expansion before averaging.
It can exceed the best Lipschitz constant \(L_F\) of the relaxed drift,
because averaging can cancel samplewise expansion. The two modulus estimates
\eqref{eq:vDelta-modulus} and \eqref{eq:sampled-modulus} therefore use
different constants.
For a bounded smooth test function with bounded first derivative, the same
truncation argument gives
\[
 \int\varphi(x+\Delta f)\,\dd\mu
 =\varphi(x)+\Delta D\varphi(x)\cdot F(x,u,\mu)+o(\Delta)
\]
locally uniformly in \(x\), uniformly in \((u,\mu)\).
An \(O(\Delta^2)\) remainder additionally follows if the second derivative
of \(\varphi\) is bounded and \(\int|f(x,u,w)|^2\,\dd\mu\) is uniformly
bounded on compact state sets; the theorem does not impose this extra
moment condition. Centered increments of size \(\sqrt\Delta\), with
appropriate covariance and tightness assumptions, can instead yield a
second-order limit \citep{FlemingSouganidis}. This is a different regime.
\end{remark}

\section{Kantorovich Duality in the Hamiltonian}\label{sec:duality}

The limiting Hamiltonian still requires optimization over disturbance
laws. When \(f\) is nonlinear in the disturbance, the mean alone does not
determine this optimization. We apply static Wasserstein strong duality
\citep[Thm.~1]{BlanchetMurthy}, \citep[Thm.~1]{GaoKleywegt} to the inner
supremum in \(H^-\) and give a finite interval for the dual multiplier.
For \(\nu\in\Pcal_r(W)\), \(\theta\ge0\),
\(\Dcal=\Dcal_\theta(\nu)\), and \((x,u,p)\), define
\begin{equation*}
  \Phi_{x,u,p}(w):=p\cdot f(x,u,w).
\end{equation*}
When \(F\) is induced by \(f\),
\begin{equation*}
  p\cdot F(x,u,\mu)=\int_W \Phi_{x,u,p}(w)\,\mu(\dd w).
\end{equation*}

\begin{theorem}\label{thm:KD}
Let \(W\subset\R^\ell\) be closed, \(\nu\in\Pcal_r(W)\), \(\theta>0\), and let
\(\Phi:W\to\R\) be upper semicontinuous with the two-sided growth bound
\begin{equation}\label{eq:Phi-growth}
  |\Phi(w)|\le C(1+|w|^r),\qquad w\in W,
\end{equation}
for some constant \(C\ge0\). Write \(m_r(\nu):=\int_W|w|^r\,\nu(\dd w)\)
and, for \(\lambda\ge0\),
\begin{equation}\label{eq:dual-objective}
  D(\lambda):=\lambda\theta^r+
  \int_W \Phi^\lambda(w)\,\nu(\dd w),
  \qquad
  \Phi^\lambda(w):=\sup_{w'\in W}\bigl[\Phi(w')-\lambda|w'-w|^r\bigr].
\end{equation}
Then strong duality holds:
\begin{equation}\label{eq:KD}
  \sup_{\mu\in\Pcal_r(W):\,W_r(\mu,\nu)\le\theta}\int_W\Phi(w')\,\mu(\dd w')
  =
  \inf_{\lambda\ge0}D(\lambda),
\end{equation}
and both sides are finite. Moreover, \(D\) is well defined with values in
\((-\infty,+\infty]\), finite for \(\lambda\ge\lambda_0:=2^{r-1}C\), and
coercive,
\begin{equation}\label{eq:coercive}
  D(\lambda)\ \ge\ \lambda\theta^r-C\bigl(1+m_r(\nu)\bigr)
  \xrightarrow[\lambda\to\infty]{}+\infty,
\end{equation}
so the infimum in \eqref{eq:KD} may be restricted to the compact interval
\([0,\Lambda]\) with
\(\Lambda:=\bigl(D(\lambda_0)+C(1+m_r(\nu))\bigr)/\theta^r\ \ge\ \lambda_0\).
\end{theorem}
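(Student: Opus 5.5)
The strong duality identity \eqref{eq:KD} will be taken from the static Wasserstein duality theorems \citep[Thm.~1]{BlanchetMurthy} and \citep[Thm.~1]{GaoKleywegt}. Their hypotheses hold here. The cost \(|w-w'|^r\) is lower semicontinuous and nonnegative on the Polish space \(W\times W\). The function \(\Phi\) is upper semicontinuous. The growth bound \eqref{eq:Phi-growth} makes \(\Phi\) integrable against every \(\mu\in\Pcal_r(W)\). Before citing them, I will check directly that both sides are finite and establish the quantitative statements about \(D\).

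\emph{Well-definedness and finiteness of \(D\).} Taking \(w'=w\) gives \(\Phi^\lambda(w)\ge\Phi(w)\ge-C(1+|w|^r)\). This lower bound is \(\nu\)-integrable, so \(\int\Phi^\lambda\,\dd\nu\) is well defined in \((-\infty,+\infty]\). Measurability of \(\Phi^\lambda\) needs a remark: it is a supremum of upper semicontinuous functions of \(w'\) minus continuous functions, so I would either cite its universal measurability as in \citep{BlanchetMurthy} or use an outer integral. For the upper bound when \(\lambda\ge\lambda_0=2^{r-1}C\), apply \(|w'|^r\le2^{r-1}(|w'-w|^r+|w|^r)\). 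This gives
\[
  \Phi(w')-\lambda|w'-w|^r\le C+2^{r-1}C|w|^r+(2^{r-1}C-\lambda)|w'-w|^r\le C(1+2^{r-1}|w|^r),
\]
so \(\Phi^\lambda\) is \(\nu\)-integrable and \(D(\lambda)<\infty\).

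\emph{Coercivity and the interval.} From \(\Phi^\lambda\ge\Phi\ge-C(1+|w|^r)\) we get \eqref{eq:coercive}, and the right-hand side tends to \(+\infty\) because \(\theta>0\). Now take \(\lambda>\Lambda\). Then \(D(\lambda)\ge\lambda\theta^r-C(1+m_r(\nu))>D(\lambda_0)\), so such \(\lambda\) cannot improve the infimum, and the infimum over \([0,\infty)\) equals the infimum over \([0,\Lambda]\). The inequality \(\Lambda\ge\lambda_0\) follows from \(D(\lambda_0)\ge\lambda_0\theta^r-C(1+m_r(\nu))\).

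\emph{Finiteness of both sides.} The primal is bounded below by \(\int\Phi\,\dd\nu>-\infty\), since \(\nu\) itself is feasible. It is bounded above by \(C(1+m_r)\), where \(m_r\) is the uniform moment bound \eqref{eq:uniform-moment} from Lemma~\ref{lem:ball-compact}. Weak duality, primal \(\le D(\lambda)\) for every \(\lambda\), follows from an optimal coupling \(\kappa\) of \(\mu\) and \(\nu\):
\[
  \int\Phi\,\dd\mu\le\int\bigl[\Phi(w')-\lambda|w'-w|^r\bigr]\dd\kappa+\lambda\theta^r\le D(\lambda).
\]
The remaining inequality, primal \(\ge\inf_\lambda D\), is the substantive one and the main obstacle. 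I would not reprove it; I would cite the duality theorems, whose hypotheses were verified above. If a self-contained argument were needed, the plan would be to approximate \(\Phi\) by a decreasing sequence of continuous functions, apply Sion's theorem on the compact convex ball \(\Dcal_\theta(\nu)\) to the Lagrangian \(\int\Phi\,\dd\mu-\lambda(W_r^r(\mu,\nu)-\theta^r)\), which is convex in \(\lambda\) and concave in \(\mu\), and pass to the limit using upper semicontinuity together with the uniform integrability supplied by \(q<r\)-type tail control.
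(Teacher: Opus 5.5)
Your proposal is correct and follows the paper's proof essentially line by line: you cite Blanchet--Murthy/Gao--Kleywegt for \eqref{eq:KD}, and you verify finiteness, the bound for \(\lambda\ge\lambda_0\), coercivity and the interval \([0,\Lambda]\) with the same estimates. Two small points: you need no universal-measurability citation, because for each fixed \(w'\) the map \(w\mapsto\Phi(w')-\lambda|w'-w|^r\) is continuous, so \(\Phi^\lambda\) is lower semicontinuous and hence Borel (as the paper notes); and your fallback Sion sketch would not work as written, since on \(\Dcal_\theta(\nu)\) the multiplier term is nonnegative (the minimax there is trivial and yields at most weak duality), the growth is of order exactly \(r\) so no \(q<r\) tail control is available, and the real content is the interchange \(\sup_{\mu\in\Pcal_r(W)}\{\int\Phi\,\dd\mu-\lambda W_r^r(\mu,\nu)\}=\int\Phi^\lambda\,\dd\nu\) over the whole noncompact space.
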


\begin{proof}
Apply Blanchet and Murthy~\citep[Thm.~1]{BlanchetMurthy} or Gao and Kleywegt~\citep[Thm.~1]{GaoKleywegt}
with transport cost \((w,w')\mapsto|w-w'|^r\). The space \(W\) is Polish,
the cost is lower semicontinuous, and \(\Phi\) is upper semicontinuous.
The growth bound \eqref{eq:Phi-growth} gives integrability against \(\nu\)
and finite growth relative to the transport cost. This proves \eqref{eq:KD}.
We now bound the primal value and the dual multiplier.

\emph{Finiteness of the primal.} Since \(\nu\) itself is feasible, the
left-hand side of \eqref{eq:KD} is at least
\(\int\Phi\,\dd\nu\ge-C(1+m_r(\nu))\); by \eqref{eq:Phi-growth} and the
uniform moment bound \eqref{eq:uniform-moment}, it is at most
\(C\bigl(1+(\theta+m_r(\nu)^{1/r})^r\bigr)<\infty\).

\emph{The dual objective.} For each \(\lambda\ge0\), taking \(w'=w\) in the
supremum gives \(\Phi^\lambda\ge\Phi\ge-C(1+|\cdot|^r)\in L^1(\nu)\), so
the integral in \eqref{eq:dual-objective} is well defined with values in
\((-\infty,+\infty]\), and \eqref{eq:coercive} follows; coercivity uses
\(\theta>0\). For measurability, \(\Phi^\lambda\) is lower semicontinuous
as a supremum of functions continuous in \(w\), hence Borel. By
\(|w'|^r\le2^{r-1}(|w|^r+|w'-w|^r)\),
\[
  \Phi(w')-\lambda|w'-w|^r
  \le C+C2^{r-1}|w|^r+(C2^{r-1}-\lambda)|w'-w|^r,
\]
so for \(\lambda\ge\lambda_0=2^{r-1}C\) one has
\(\Phi^\lambda(w)\le C+C2^{r-1}|w|^r\in L^1(\nu)\), whence
\(D(\lambda)<\infty\).

\emph{Compact multiplier interval.} If \(\lambda>\Lambda\), then by
\eqref{eq:coercive},
\(D(\lambda)>\Lambda\theta^r-C(1+m_r(\nu))=D(\lambda_0)\), so no such
\(\lambda\) improves on \(\lambda_0\in[0,\Lambda]\), and the infimum over
\([0,\infty)\) coincides with the infimum over \([0,\Lambda]\). Finally,
\(\Lambda\ge\lambda_0\) follows from
\(D(\lambda_0)\ge\lambda_0\theta^r-C(1+m_r(\nu))\).
\end{proof}

Consequently, whenever \(\Dcal=\Dcal_\theta(\nu)\) and \(F\) is induced by
\(f\),
\begin{equation*}
H^+(x,p)
=\sup_{\mu\in\Dcal}\inf_{u\in U}
\left\{c(x,u)+\int_Wp\cdot f(x,u,w)\,\mu(\dd w)\right\},
\end{equation*}
and, for \(\theta>0\), the inner measure supremum in \(H^-\) can be rewritten by \eqref{eq:KD}:
\begin{equation}\label{eq:Hminus-dual}
\begin{aligned}
H^-(x,p)=\inf_{u\in U,\lambda\ge0}\bigg\{c(x,u)&+\lambda\theta^r\\
&+\int_W\sup_{w'\in W}
\{p\cdot f(x,u,w')-\lambda|w'-w|^r\}\,\nu(\dd w)\bigg\},
\end{aligned}
\end{equation}
provided \(w\mapsto p\cdot f(x,u,w)\) satisfies the two-sided growth bound
\eqref{eq:Phi-growth}; this holds under \eqref{eq:f-growth} with
\(q_f\le r\), and the map is in fact continuous.
For \(\theta=0\), the ball is \(\{\nu\}\) and the measure supremum
is simply \(\int p\cdot f(x,u,w)\,\dd\nu\); the compact multiplier bound
in Theorem~\ref{thm:KD} is not used at zero radius.
Under the Isaacs condition (Proposition~\ref{prop:isaacs}), the same
representation applies to \(H=H^+=H^-\). Formula \eqref{eq:Hminus-dual} is the
continuous-time analogue of the semi-infinite Bellman reformulation in
discrete-time Wasserstein DRO \citep{EsfahaniKuhn,YangDRO}.

\subsection{Reduction for Affine Disturbance Dependence}\label{subsec:affine}

For affine disturbance dependence, the distributional optimization reduces
to a problem over means. Assume \(W=\R^\ell\) and
\begin{equation}\label{eq:affine-f}
  f(x,u,w)=f_0(x,u)+G(x,u)w,
\end{equation}
where \(G(x,u)\in\R^{d\times\ell}\). Let
\begin{equation*}
  \bar w_\nu:=\int_{\R^\ell}w\,\nu(\dd w).
\end{equation*}
Then
\begin{equation*}
  F(x,u,\mu)=f_0(x,u)+G(x,u)\bar w_\mu,
  \qquad
  \bar w_\mu:=\int_{\R^\ell}w\,\mu(\dd w).
\end{equation*}

\begin{proposition}\label{prop:affine-penalty}
Let \(\Dcal=\Dcal_\theta(\nu)\) on \(\R^\ell\), with \(r\ge1\). For
any \(q\in\R^\ell\),
\begin{equation}\label{eq:mean-support}
  \sup_{\mu\in\Dcal}\int q\cdot w\,\mu(\dd w)
  =q\cdot\bar w_\nu+\theta |q|.
\end{equation}
Therefore, for \(q=G(x,u)^{\mathsf T}p\),
\begin{equation}\label{eq:affine-H-inner}
  \sup_{\mu\in\Dcal}p\cdot F(x,u,\mu)
  =p\cdot f_0(x,u)+p\cdot G(x,u)\bar w_\nu
  +\theta |G(x,u)^{\mathsf T}p|.
\end{equation}
\end{proposition}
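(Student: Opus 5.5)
We prove \eqref{eq:mean-support} by a two-sided argument: an upper bound valid for every \(\mu\) in the ball, and an explicit translation of \(\nu\) that attains it. The identity \eqref{eq:affine-H-inner} then follows at once from the formula \(F(x,u,\mu)=f_0(x,u)+G(x,u)\bar w_\mu\) with \(q=G(x,u)^{\mathsf T}p\). Here \(f_0(x,u)\) does not depend on \(\mu\), and \(p\cdot G\bar w_\mu=q\cdot\bar w_\mu\). The case \(q=0\) is trivial, so we assume \(q\neq0\).

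\emph{Upper bound.} Fix \(\mu\in\Dcal_\theta(\nu)\) and let \(\kappa\in\Pi(\mu,\nu)\) be an optimal plan (which exists by \citep[Thm.~4.1]{Villani}). Since \(r\ge1\), both \(\mu\) and \(\nu\) have finite first moments, so the means are well defined, and
\[
  q\cdot\bar w_\mu-q\cdot\bar w_\nu=\int q\cdot(w-w')\,\kappa(\dd w,\dd w')
  \le|q|\int|w-w'|\,\dd\kappa\le|q|\Bigl(\int|w-w'|^r\dd\kappa\Bigr)^{1/r}\le\theta|q|.
\]
The first inequality is Cauchy--Schwarz, and the second is Jensen's (or H\"older's) inequality, which uses \(r\ge1\). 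Taking the supremum over \(\mu\) gives ``\(\le\)''.

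\emph{Attainment.} Let \(e:=q/|q|\) and let \(\mu^\star:=(\tau)_\#\nu\) be the push-forward of \(\nu\) under the translation \(\tau(w)=w+\theta e\). Then \(\mu^\star\in\Pcal_r(\R^\ell)\), because translation preserves finite \(r\)-th moments. The coupling \((\tau\times\mathrm{id})_\#\nu\) has transport cost exactly \(\theta^r\), so \(W_r(\mu^\star,\nu)\le\theta\). Its mean is \(\bar w_\nu+\theta e\), which gives \(q\cdot\bar w_{\mu^\star}=q\cdot\bar w_\nu+\theta|q|\). This establishes ``\(\ge\)'', and the supremum is in fact a maximum.

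The step that needs care is the hypothesis \(W=\R^\ell\): the translated measure must remain supported in \(W\), and this is exactly where the full space is needed. Beyond that, the argument is elementary and does not rely on the duality of Theorem~\ref{thm:KD}. It also does not rely on the continuity issue of Remark~\ref{rem:strict}, which is why the formula holds for every \(r\ge1\).
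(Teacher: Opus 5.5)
Your proposal is correct and follows essentially the same route as the paper. The paper also bounds $q\cdot(\bar w_\mu-\bar w_\nu)$ by $\theta|q|$ through a coupling and H\"older's inequality (it uses an arbitrary plan and then takes the infimum, where you take an optimal plan, which makes no difference). It likewise attains the bound with the translated law $(w\mapsto w+\theta q/|q|)_\#\nu$, and reads off the affine Hamiltonian formula by taking $q=G(x,u)^{\mathsf T}p$.
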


\begin{proof}
Let \(\kappa\in\Pi(\mu,\nu)\), with first coordinate \(w\sim\mu\) and second
coordinate \(w'\sim\nu\) as in \eqref{eq:wr}. Then, by H\"older's inequality
(\(\kappa\) is a probability measure and \(r\ge1\)),
\[
\begin{aligned}
  q\cdot(\bar w_\mu-\bar w_\nu)
  &=\int q\cdot(w-w')\,\kappa(\dd w,\dd w')\\
  &\le |q|\left(\int |w-w'|^r\,\kappa(\dd w,\dd w')\right)^{1/r}.
\end{aligned}
\]
Taking the infimum over transport plans gives
\(q\cdot(\bar w_\mu-\bar w_\nu)\le |q|W_r(\mu,\nu)\le\theta|q|\). Hence the
left-hand side of \eqref{eq:mean-support} is at most
\(q\cdot\bar w_\nu+\theta|q|\). If \(q\ne0\), let
\(e=q/|q|\) and define \(T(w)=w+\theta e\). The law
\(\mu=T_\#\nu\) satisfies \(W_r(\mu,\nu)\le\theta\) and
\(\int q\cdot w\,\mu(\dd w)=q\cdot\bar w_\nu+\theta|q|\). If \(q=0\), the
identity is trivial. This proves \eqref{eq:mean-support}. The formula
\eqref{eq:affine-H-inner} follows by taking \(q=G(x,u)^{\mathsf T}p\).
\end{proof}

\begin{corollary}
\label{cor:mean-set}
Let \(\Dcal=\Dcal_\theta(\nu)\) on \(W=\R^\ell\) with \(r\ge1\). Then
\begin{equation}\label{eq:mean-set}
  \{\bar w_\mu:\mu\in\Dcal_\theta(\nu)\}
  =\bar B(\bar w_\nu,\theta)
  :=\{z\in\R^\ell:|z-\bar w_\nu|\le\theta\}.
\end{equation}
Consequently, in the affine model \eqref{eq:affine-f}, both Hamiltonians
coincide with those of the finite-dimensional deterministic differential game
in which the adversary chooses a vector \(z\in\bar B(\bar w_\nu,\theta)\) and
the dynamics are \(\dot X_s=f_0(X_s,u_s)+G(X_s,u_s)z_s\):
\begin{align}
  H^+(x,p)&=\sup_{|e|\le\theta}\inf_{u\in U}
  \{c(x,u)+p\cdot f_0(x,u)+p\cdot G(x,u)(\bar w_\nu+e)\},
  \label{eq:Hplus-affine-fd}\\
  H^-(x,p)&=\inf_{u\in U}\sup_{|e|\le\theta}
  \{c(x,u)+p\cdot f_0(x,u)+p\cdot G(x,u)(\bar w_\nu+e)\}.
  \label{eq:Hminus-affine-fd}
\end{align}
\end{corollary}

\begin{proof}
The inclusion \(\supseteq\) in \eqref{eq:mean-set} follows from the shift
construction in the proof of Proposition~\ref{prop:affine-penalty}: for
\(|e'|\le\theta\), the law \((\,\cdot+e')_\#\nu\) lies in \(\Dcal_\theta(\nu)\)
and has mean \(\bar w_\nu+e'\). The inclusion \(\subseteq\) follows from
\eqref{eq:mean-support}: \(q\cdot(\bar w_\mu-\bar w_\nu)\le\theta|q|\) for all
\(q\) forces \(|\bar w_\mu-\bar w_\nu|\le\theta\). Since \(F\) and \(c\) depend
on \(\mu\) only through \(\bar w_\mu\) in the affine model, the identities
\eqref{eq:Hplus-affine-fd}--\eqref{eq:Hminus-affine-fd} follow.
\end{proof}

For \(r=1\), the mean map is Borel but may not be weakly continuous.
Projection by the mean and lifting by translations nevertheless preserve
measurable paths, nonanticipativity, and payoffs. The lower and upper values
therefore coincide with those of the finite-dimensional games, subject to
the latter's state and cost regularity assumptions. The details are in
Remark~\ref{rem:r-one}.

Thus, in the affine model the Hamiltonian associated with the upper value
becomes
\begin{equation}\label{eq:Hminus-affine}
  H^-(x,p)=\inf_{u\in U}
  \left\{
    c(x,u)+p\cdot f_0(x,u)+p\cdot G(x,u)\bar w_\nu
    +\theta |G(x,u)^{\mathsf T}p|
  \right\}.
\end{equation}
The last term is the support function of \(\bar B(0,\theta)\) at \(G(x,u)^{\mathsf T}p\). If, in addition, \(U\) is convex, \(c(x,\cdot)\) is convex,
\(f_0(x,\cdot)\) is affine, and \(u\mapsto G(x,u)\) is affine (in particular,
if \(G=G(x)\) does not depend on \(u\)), then \(f(x,\cdot,w)\) is affine for
every \(w\), Proposition~\ref{prop:isaacs} applies, and \(H^+=H^-\); the game
has a value by Corollary~\ref{cor:value}.

\section{Sensitivity, Certificates, and Statistical Consistency}
\label{sec:certificate}

We next ask how estimating the disturbance law affects the value.
Containment gives a cost certificate. Quantitative approximation requires
control of state sensitivity and movement of the ambiguity sets.
Write \(V^\pm_{\theta,\nu}\) and \(H^\pm_{\theta,\nu}\) for the
values and Hamiltonians associated with \(\Dcal_\theta(\nu)\).
Each ambiguity set must satisfy Assumption~\ref{ass:standing}, or admit the
affine reduction of Remark~\ref{rem:r-one} with its stated regularity.
Comparisons require common state and cost Lipschitz constants \(L_F,L_c\);
finite drift-growth constants \(C_F\) may differ.

\subsection{A Cost Bound under the True Disturbance Law}

Let \(\mu_*\in\Pcal_r(W)\) be the true law and \(\nu_N\) its empirical
estimate from independent samples. Whenever \(\mu_*\in\Dcal_\theta(\nu_N)\),
the constant path \(b(s)\equiv\mu_*\) is available to the adversary.
The following certificate is a direct consequence of this inclusion.

\begin{proposition}\label{prop:certificate}
Let \(\Dcal=\Dcal_\theta(\nu_N)\), and assume \(\mu_*\in\Dcal\). For every
controller strategy \(\alpha\in\Gammac\),
\begin{equation}\label{eq:certificate-alpha}
  \sup_{b\in\Bcal}J(x;\alpha[b],b)
  \ge
  J(x;\alpha[b^*],b^*),
  \qquad b^*(s)\equiv\mu_*.
\end{equation}
In particular, if \(\alpha^\varepsilon\) is \(\varepsilon\)-optimal for
\(V^-_{\theta,\nu_N}\), then
\begin{equation}\label{eq:certificate-value}
  J(x;\alpha^\varepsilon[b^*],b^*)
  \le V^-_{\theta,\nu_N}(x)+\varepsilon.
\end{equation}
Here \(\alpha^\varepsilon[b^*]\) is the control path generated by the
strategy \(\alpha^\varepsilon\) against the constant adversarial path
\(b^*\); it may depend on \(\mu_*\) (see
Remark~\ref{rem:certificate-info}).
\end{proposition}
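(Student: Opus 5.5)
The proof is a direct consequence of the inclusion \(\mu_*\in\Dcal\) together with the definitions of \(\Bcal\) and \(V^-\); no dynamic programming or comparison argument is needed. The plan has three short steps: check that \(b^*\) is admissible, read off \eqref{eq:certificate-alpha} from the definition of a supremum, and then combine it with the definition of \(\varepsilon\)-optimality.

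\emph{Admissibility of \(b^*\).} The constant path \(b^*(s)\equiv\mu_*\) takes values in \(\Dcal\) because \(\mu_*\in\Dcal\). Being constant, it is Lebesgue measurable, so \(b^*\in\Bcal\). For any \(\alpha\in\Gammac\), \(\alpha[b^*]\in\Acal\) by the definition of a strategy. Proposition~\ref{prop:ode} then gives a unique trajectory \(X^{x,\alpha[b^*],b^*}\), so \(J(x;\alpha[b^*],b^*)\) is well defined and lies in \([0,M_c/\rho]\) by \eqref{eq:cost-bound}.

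\emph{Proof of \eqref{eq:certificate-alpha}.} The left-hand side is a supremum over all \(b\in\Bcal\), and \(b^*\) is one such path. The inequality therefore holds trivially.

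\emph{Proof of \eqref{eq:certificate-value}.} I take \(\varepsilon\)-optimality to mean \(\sup_{b\in\Bcal}J(x;\alpha^\varepsilon[b],b)\le V^-_{\theta,\nu_N}(x)+\varepsilon\). Such a strategy exists because \(V^-_{\theta,\nu_N}(x)\) is a finite infimum over \(\Gammac\). Chaining this bound with \eqref{eq:certificate-alpha} for \(\alpha=\alpha^\varepsilon\) gives the claim.

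The one point needing care is that the ambiguity set must satisfy Assumption~\ref{ass:standing}, so that the value and the ODE are well posed. This holds by Lemma~\ref{lem:ball-compact}, since \(\nu_N\) has finite support and hence lies in \(\Pcal_r(W)\). Nonanticipativity plays no role: the strategy is simply evaluated on one fixed admissible path. This is why the generated control may depend on \(\mu_*\), which the proposition's statement already acknowledges.
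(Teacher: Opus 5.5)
Your argument is correct and matches the paper's proof: \(b^*\in\Bcal\) because \(\mu_*\in\Dcal\), so \eqref{eq:certificate-alpha} follows from the definition of the supremum, and chaining it with the \(\varepsilon\)-optimality bound \(\sup_{b}J(x;\alpha^\varepsilon[b],b)\le V^-_{\theta,\nu_N}(x)+\varepsilon\) gives \eqref{eq:certificate-value}. One small correction to your side remark: Lemma~\ref{lem:ball-compact} only supplies (A1) for the ball; the drift condition (A2) (for instance via Lemma~\ref{lem:F-regular}) and (A3)--(A4) are standing hypotheses of Section~\ref{sec:certificate}, not consequences of \(\nu_N\) having finite support.
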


\begin{proof}
Since \(b^*\in\Bcal\), \eqref{eq:certificate-alpha} follows directly from the
supremum over \(b\). If \(\alpha^\varepsilon\) is \(\varepsilon\)-optimal, then
\(\sup_bJ(x;\alpha^\varepsilon[b],b)\le V^-_{\theta,\nu_N}(x)+\varepsilon\),
which gives \eqref{eq:certificate-value}.
\end{proof}

\begin{remark}[Information and Implementable Controllers]
\label{rem:certificate-info}
The bound evaluates a strategy under the drift averaged with respect to the true law.
Because an Elliott--Kalton strategy observes the current adversarial law,
\(\alpha^\varepsilon[b^*]\) may require knowledge of \(\mu_*\). A state
feedback inherits the bound only if its closed loop is well posed for every
admissible \(b\) and its induced strategy has worst-case cost at most
\(V^-+\varepsilon\). Affinity in the disturbance is insufficient to ensure
these properties or a bang-bang minimizer. For instance, \(f=u+w\) with running cost
\(u^2\) is optimized by the interior action \(u=0\). Likewise,
\(\dot x=-\operatorname{sign}(x)+m\), \(x(0)=0\), with
\(0<m<1\) and \(\operatorname{sign}(0)=0\), has no Carath\'eodory solution.
The scalar experiments use a sample-and-hold policy, which is well posed
on each holding interval. Its costs are compared with the game values
without assuming \(\varepsilon\)-optimality.
\end{remark}

\subsection{Lipschitz Values}

\begin{assumption}[Lipschitz cost and discount condition]\label{ass:lip}
In addition to Assumption~\ref{ass:standing},
\begin{enumerate}[label=(A\arabic*),start=5]
  \item there exists \(L_c>0\) with \(|c(x,u)-c(y,u)|\le L_c|x-y|\) for all
  \(x,y\in\R^d\), \(u\in U\), and the discount rate satisfies \(\rho>L_F\).
\end{enumerate}
\end{assumption}

\begin{proposition}\label{prop:V-lip}
Under Assumptions~\ref{ass:standing} and \ref{ass:lip}, the values \(V^\pm\)
are Lipschitz continuous with constant
\begin{equation}\label{eq:LV}
  L_V:=\frac{L_c}{\rho-L_F}.
\end{equation}
\end{proposition}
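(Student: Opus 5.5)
The plan is to prove the bound for the payoff \(J\) uniformly in \((a,b)\), as in Lemma~\ref{lem:value-continuity}, and then pass it through the infima and suprema that define \(V^\pm\). The Lipschitz cost in (A5) replaces the general modulus \(\omega_c\) by the linear one \(L_c\,|x-y|\). The discount condition \(\rho>L_F\) makes the resulting integral finite.

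\emph{Step 1 (trajectory estimate).} Fix \(a\in\Acal\), \(b\in\Bcal\) and \(x,y\in\R^d\). By \eqref{eq:state-lip-initial} of Proposition~\ref{prop:ode}, which rests only on (A2),
\[
|X_s^{x,a,b}-X_s^{y,a,b}|\le e^{L_Fs}|x-y|
\qquad\text{for all } s\ge0 .
\]

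\emph{Step 2 (payoff estimate).} Both trajectories use the same control \(a(s)\), so (A5) gives the pointwise bound
\[
|c(X_s^{x,a,b},a(s))-c(X_s^{y,a,b},a(s))|\le L_c e^{L_Fs}|x-y| .
\]
Integrating against \(e^{-\rho s}\) over \([0,\infty)\) then yields
\[
|J(x;a,b)-J(y;a,b)|\le L_c|x-y|\int_0^\infty e^{-(\rho-L_F)s}\dd s=\frac{L_c}{\rho-L_F}|x-y| = L_V|x-y| .
\]
This is exactly \eqref{eq:J-uniform-continuity} with the modulus \(\omega_J(t)=L_Vt\). This modulus does not depend on \((a,b)\). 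The integral converges precisely because \(\rho>L_F\), which is the only place where (A5) beyond cost regularity enters.

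\emph{Step 3 (passage to the values).} For the lower value, fix \(\alpha\in\Gammac\). The same strategy is admissible from both initial states \(x\) and \(y\), and for each \(b\) the control path \(\alpha[b]\) is the same. Hence
\[
J(x;\alpha[b],b)\le J(y;\alpha[b],b)+L_V|x-y| \quad\text{for every } b .
\]
Taking \(\sup_b\) and then \(\inf_\alpha\) gives \(V^-(x)\le V^-(y)+L_V|x-y|\). Exchanging \(x\) and \(y\) gives the reverse inequality.

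For the upper value, fix \(\beta\in\Gammaa\) and \(a\in\Acal\); the adversarial path \(\beta[a]\) is then common to both starting points. Taking \(\inf_a\) and then \(\sup_\beta\) gives the bound for \(V^+\) in the same way.

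The only point needing care is this passage through the optimization. It works because strategies are defined on paths and do not depend on the initial state. Therefore the same \((\alpha[b],b)\) or \((a,\beta[a])\) can be compared from \(x\) and \(y\), and infima and suprema of functions sharing a common Lipschitz bound inherit that bound. There is no real obstacle beyond the integrability condition \(\rho>L_F\) in Step 2. An alternative route would be a comparison argument with the structure estimate \eqref{eq:H-x-module}, but the direct trajectory argument is shorter.
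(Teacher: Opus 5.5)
Your proof is correct and follows the paper's argument: the flow estimate \eqref{eq:state-lip-initial} combined with (A5) gives the payoff bound \(|J(x;a,b)-J(y;a,b)|\le L_V|x-y|\) uniformly in \((a,b)\), with the integral finite because \(\rho>L_F\). This bound then passes through the suprema and infima defining \(V^\pm\).
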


\begin{proof}
By \eqref{eq:state-lip-initial} and (A5),
\(|J(x;a,b)-J(y;a,b)|\le L_c|x-y|\int_0^\infty
 e^{-(\rho-L_F)s}\,\dd s=L_V|x-y|\).
Taking the corresponding suprema and infima gives the claim.
\end{proof}

\subsection{Stability of Wasserstein Balls and of the Values}

\begin{lemma}\label{lem:hausdorff}
Let \(W=\R^\ell\), \(r\ge1\), \(\nu,\nu'\in\Pcal_r(\R^\ell)\), and
\(\theta,\theta'\ge0\). Then, in the Hausdorff distance induced by \(W_r\),
\begin{equation*}
  \dH\bigl(\Dcal_\theta(\nu),\Dcal_{\theta'}(\nu')\bigr)
  \le |\theta-\theta'|+W_r(\nu,\nu').
\end{equation*}
\end{lemma}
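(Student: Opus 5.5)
By symmetry of the Hausdorff distance, it suffices to prove the one-sided bound: for every \(\mu\in\Dcal_\theta(\nu)\) we exhibit \(\mu'\in\Dcal_{\theta'}(\nu')\) with \(W_r(\mu,\mu')\le|\theta-\theta'|+W_r(\nu,\nu')\). Exchanging the roles of \((\theta,\nu)\) and \((\theta',\nu')\) then gives the other half. Write \(\delta:=W_r(\nu,\nu')\). The plan proceeds in two moves: first transfer \(\mu\) from the ball around \(\nu\) to the ball of radius \(\theta+\delta\) around \(\nu'\), then shrink the radius along a geodesic if \(\theta'<\theta+\delta\).

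\emph{Step 1 (change of center).} By the triangle inequality for \(W_r\), \(W_r(\mu,\nu')\le\theta+\delta\). If \(\theta+\delta\le\theta'\), take \(\mu'=\mu\), so that \(W_r(\mu,\mu')=0\).

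\emph{Step 2 (radius reduction by interpolation).} Otherwise set \(s:=W_r(\mu,\nu')\). If \(s\le\theta'\), again take \(\mu'=\mu\). If \(s>\theta'\), take an optimal plan \(\kappa\in\Pi(\mu,\nu')\) and define the displacement interpolation \(\mu_t:=((1-t)\pi_1+t\pi_2)_\#\kappa\) for \(t\in[0,1]\), where \(\pi_1,\pi_2\) are the coordinate projections. Using \(\kappa\)-couplings, for \(0\le t\le1\) we have
\[
W_r(\mu_t,\nu')\le(1-t)s
\quad\text{and}\quad
W_r(\mu,\mu_t)\le ts,
\]
because the displacements are \((1-t)(w-w')\) and \(t(w-w')\), respectively. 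Here we use the convexity of \(W=\R^\ell\); this is where the Euclidean hypothesis enters. Choose \(t=1-\theta'/s\). Then \(\mu':=\mu_t\) satisfies \(W_r(\mu',\nu')\le\theta'\) and
\[
W_r(\mu,\mu')\le s-\theta'\le\theta+\delta-\theta'\le|\theta-\theta'|+\delta.
\]
The finite \(r\)-moment of \(\mu_t\) follows from the moments of \(\mu\) and \(\nu'\). When \(\theta'=0\), we get \(t=1\) and \(\mu'=\nu'\), which is consistent.

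\emph{Conclusion and main obstacle.} Taking the supremum over \(\mu\), and then symmetrizing, yields the Hausdorff bound. The only nontrivial point is ensuring that the radius-shrinking step stays inside the state space and keeps the cost linear in the distance moved. A simple mixture \((1-t)\mu+t\nu'\) would only give \(W_r^r\)-convexity, and hence a \(t^{1/r}\) loss. Displacement interpolation along an optimal plan, which relies on \(W=\R^\ell\) being convex, avoids this loss and gives the sharp linear bound.
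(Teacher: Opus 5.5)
Your proof is correct. It handles the change of center differently from the paper.

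\textbf{The paper's route.} The paper first translates $\mu$. It glues optimal plans for $(\mu,\nu)$ and $(\nu,\nu')$ and lets $\mu_1$ be the law of $w_\mu+(w_{\nu'}-w_\nu)$. This moves $\mu$ by at most $W_r(\nu,\nu')$ while keeping $W_r(\mu_1,\nu')\le\theta$. Only then does it contract radially toward $\nu'$ by the factor $\theta'/\theta$, at cost $(\theta-\theta')_+$.

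\textbf{Your route.} You do not move $\mu$ in the first step. The triangle inequality absorbs the center change into an enlarged radius $\theta+\delta$, and a single displacement interpolation toward $\nu'$ follows.

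\textbf{What each buys.}
\begin{itemize}
\item Your argument avoids the gluing construction.
\item It gives the sharper one-sided excess $(\theta+\delta-\theta')_+$ instead of the paper's $(\theta-\theta')_++\delta$. In particular it recovers the inclusion $\Dcal_\theta(\nu)\subseteq\Dcal_{\theta'}(\nu')$ when $\theta+\delta\le\theta'$, which the paper's construction misses, since it always pays $W_r(\nu,\nu')$.
\item After symmetrization both arguments give the same bound $|\theta-\theta'|+\delta$.
\item Your argument uses only convexity of $W$. It is the standard closed-ball estimate in a geodesic metric space, applied to $(\Pcal_r(W),W_r)$, which is geodesic when $W$ is convex. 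The paper's recentering instead needs $W$ closed under translations $w\mapsto w+(w_{\nu'}-w_\nu)$, which essentially requires $W=\R^\ell$.
\item The paper's decomposition separates the two perturbations cleanly: a rigid translation of the whole ball costing exactly the center shift, then a pure change of radius. This mirrors the additive form of the stated bound.
\end{itemize}
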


\begin{proof}
By symmetry it suffices to show that for every \(\mu\in\Dcal_\theta(\nu)\)
there is \(\mu'\in\Dcal_{\theta'}(\nu')\) with
\(W_r(\mu,\mu')\le(\theta-\theta')_++W_r(\nu,\nu')\).

\emph{Step 1 (recentering).} Let \(\kappa_1\in\Pi(\mu,\nu)\) and
\(\kappa_2\in\Pi(\nu,\nu')\) be optimal plans, and let
\((w_\mu,w_\nu,w_{\nu'})\) be a gluing of \(\kappa_1\) and \(\kappa_2\) on a
common probability space. Define \(\mu_1\) as the law of
\(w_\mu+(w_{\nu'}-w_\nu)\). Then
\(W_r(\mu_1,\nu')\le\|(w_\mu+w_{\nu'}-w_\nu)-w_{\nu'}\|_{L^r}
=\|w_\mu-w_\nu\|_{L^r}=W_r(\mu,\nu)\le\theta\), and
\(W_r(\mu,\mu_1)\le\|w_{\nu'}-w_\nu\|_{L^r}=W_r(\nu,\nu')\).

\emph{Step 2 (radial contraction).} If \(\theta\le\theta'\), take
\(\mu'=\mu_1\). Otherwise set \(t:=\theta'/\theta\in[0,1)\), let
\(\pi\in\Pi(\nu',\mu_1)\) be optimal with coordinates \((w_{\nu'},w_{\mu_1})\),
and define \(\mu'\) as the law of \((1-t)w_{\nu'}+tw_{\mu_1}\). Then
\(W_r(\mu',\nu')\le t\,\|w_{\mu_1}-w_{\nu'}\|_{L^r}
= t\,W_r(\mu_1,\nu')\le t\theta=\theta'\), and
\(W_r(\mu',\mu_1)\le(1-t)W_r(\mu_1,\nu')\le\theta-\theta'\). Combining the two
steps with the triangle inequality gives the claim.
\end{proof}

\begin{theorem}[Joint radius and center stability]
\label{thm:sensitivity}
Let \(W=\R^\ell\), let \(F\) be induced by \(f\), and suppose that
\(f(x,u,\cdot)\) is \(L_w\)-Lipschitz on \(\R^\ell\), uniformly in
\((x,u)\). Fix \(\theta,\theta'\ge0\) and
\(\nu,\nu'\in\Pcal_r(\R^\ell)\). Assume that
Assumptions~\ref{ass:standing} and \ref{ass:lip} hold for both ambiguity
sets, or, in the affine model \eqref{eq:affine-f}, that both reduced finite-dimensional
games satisfy the corresponding continuity, state regularity, bounded-cost,
and discounting hypotheses (Remark~\ref{rem:r-one}). In either case, require
\emph{common constants} \((L_F,L_c)\) and \(\rho>L_F\); the finite growth
constants \(C_F\) may differ. Then,
\begin{equation}\label{eq:H-sensitivity}
  \bigl|H^\pm_{\theta,\nu}(x,p)-H^\pm_{\theta',\nu'}(x,p)\bigr|
  \le L_w|p|\,\bigl(|\theta-\theta'|+W_r(\nu,\nu')\bigr),
  \qquad (x,p)\in\R^d\times\R^d,
\end{equation}
and
\begin{equation}\label{eq:V-sensitivity}
  \bigl\|V^\pm_{\theta,\nu}-V^\pm_{\theta',\nu'}\bigr\|_\infty
  \le \frac{L_wL_c}{\rho(\rho-L_F)}
  \bigl(|\theta-\theta'|+W_r(\nu,\nu')\bigr).
\end{equation}
\end{theorem}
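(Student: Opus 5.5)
The argument has two stages. First we prove the Hamiltonian estimate \eqref{eq:H-sensitivity}. Then we convert it into the value estimate \eqref{eq:V-sensitivity} by comparing viscosity solutions, using the Lipschitz bound of Proposition~\ref{prop:V-lip}.

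\textbf{Hamiltonian estimate.} Write \(\delta:=|\theta-\theta'|+W_r(\nu,\nu')\). For fixed \((x,u,p)\) the map \(w\mapsto p\cdot f(x,u,w)\) is \(L_w|p|\)-Lipschitz. For any \(\mu,\mu'\) and any coupling \(\kappa\), Jensen and H\"older (recall \(r\ge1\)) give
\[
|p\cdot F(x,u,\mu)-p\cdot F(x,u,\mu')|\le L_w|p|\int|w-w'|\,\dd\kappa\le L_w|p|\,W_r(\mu,\mu').
\]
By Lemma~\ref{lem:hausdorff}, every \(\mu\in\Dcal_\theta(\nu)\) has a partner \(\mu'\in\Dcal_{\theta'}(\nu')\) with \(W_r(\mu,\mu')\le\delta\), and conversely. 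Hence, for each fixed \(u\), the sets of payoffs \(\{c(x,u)+p\cdot F(x,u,\mu)\}\) indexed by the two balls are within Hausdorff distance \(L_w|p|\delta\) of each other, uniformly in \(u\). This bounds the difference of the inner suprema in \(H^-\) by \(L_w|p|\delta\), and the outer infimum preserves the bound.

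For \(H^+=\sup_\mu\inf_u\), we pair \(\mu\) with its partner \(\mu'\). The functions \(u\mapsto c+p\cdot F(\cdot,\mu)\) and \(u\mapsto c+p\cdot F(\cdot,\mu')\) then differ by at most \(L_w|p|\delta\) uniformly in \(u\), so their infima over \(u\) differ by the same amount. Taking suprema and then symmetrizing gives \eqref{eq:H-sensitivity}.

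In the affine case the same bound follows directly from \eqref{eq:Hplus-affine-fd}--\eqref{eq:Hminus-affine-fd}. There \(L_w=\|G\|\), and the two mean balls are at Hausdorff distance at most \(|\theta-\theta'|+|\bar w_\nu-\bar w_{\nu'}|\le\delta\).

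\textbf{Value estimate.} Set \(v:=V^\pm_{\theta,\nu}\) and \(v':=V^\pm_{\theta',\nu'}\). By Proposition~\ref{prop:V-lip}, both are \(L_V\)-Lipschitz with the common constant \(L_V=L_c/(\rho-L_F)\); this is where the common constants \((L_F,L_c)\) and the condition \(\rho>L_F\) are used. Consequently every \(C^1\) test function touching \(v\) from above or below at a point \(x_0\) satisfies \(|D\varphi(x_0)|\le L_V\).

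We claim that \(w:=v-L_VL_w\delta/\rho\) is a viscosity subsolution of the primed equation \(\rho w-H_{\theta',\nu'}(x,Dw)=0\). Take a test function \(\varphi\) touching \(w\) from above at \(x_0\). Then \(\varphi+L_VL_w\delta/\rho\) touches \(v\) from above at \(x_0\), so the subsolution property of \(v\) (Theorem~\ref{thm:hji}) gives
\[
\rho w(x_0)=\rho v(x_0)-L_VL_w\delta\le H_{\theta,\nu}(x_0,D\varphi(x_0))-L_wL_V\delta\le H_{\theta',\nu'}(x_0,D\varphi(x_0)),
\]
where the last step uses \eqref{eq:H-sensitivity} together with \(|D\varphi(x_0)|\le L_V\).

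Comparison (Theorem~\ref{thm:comparison}) for the primed equation then yields \(w\le v'\). Swapping the roles of the two ambiguity sets gives the reverse inequality, and hence
\[
\|v-v'\|_\infty\le \frac{L_VL_w\delta}{\rho}=\frac{L_wL_c}{\rho(\rho-L_F)}\,\delta,
\]
which is \eqref{eq:V-sensitivity}.

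\textbf{Main obstacle.} The delicate point is justifying the gradient bound \(|D\varphi(x_0)|\le L_V\) at contact points. The way around it is to note that \(\varphi\) touches the Lipschitz function \(v\) from one side at \(x_0\), which forces \(|D\varphi(x_0)|\le L_V\). The remaining concern is that the two equations may have different growth constants \(C_F\). This causes no difficulty, since the comparison argument is applied to a single equation, the primed one, and it only requires \(w\) and \(v'\) to be bounded and uniformly continuous, which both are.

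In the affine case with \(r=1\), where weak continuity of the mean map may fail, the argument is run on the reduced finite-dimensional games of Remark~\ref{rem:r-one}, whose values coincide with \(V^\pm\).
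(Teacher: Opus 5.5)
Your proposal is correct and follows the paper's proof. The Hamiltonian bound comes from the \(L_w|p|\)-Lipschitz dependence of the payoff on \(\mu\) in \(W_r\) together with Lemma~\ref{lem:hausdorff}; in your first display you should take an optimal coupling rather than an arbitrary one. The value bound comes from a constant shift, the gradient bound \(|D\varphi(x_0)|\le L_V\) given by Proposition~\ref{prop:V-lip}, and comparison, and the only difference is that the paper shows \(V'+L_wL_V\delta/\rho\) is a supersolution of the unprimed equation, whereas you show the mirror-image statement that \(v-L_wL_V\delta/\rho\) is a subsolution of the primed one.
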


\begin{proof}
\emph{Hamiltonian estimate.} For \(\mu,\mu'\in\Pcal_r\), Lipschitz continuity
of \(f\) in \(w\) and the Kantorovich--Rubinstein inequality give
\[
  |p\cdot F(x,u,\mu)-p\cdot F(x,u,\mu')|
  \le|p|\,L_w\,W_1(\mu,\mu')\le|p|\,L_w\,W_r(\mu,\mu').
\]
Hence \(\mu\mapsto L(u,\mu):=c(x,u)+p\cdot F(x,u,\mu)\) is
\(L_w|p|\)-Lipschitz for \(W_r\), uniformly in \(u\). For sets
\(A,A'\subset\Pcal_r\) and an equi-Lipschitz family,
\(|\sup_A L-\sup_{A'}L|\le L_w|p|\,\dH(A,A')\), and the same bound survives
inner infima or outer infima over \(u\). Estimate \eqref{eq:H-sensitivity} then
follows from Lemma~\ref{lem:hausdorff}.

\emph{Value estimate.} In the affine case, apply Proposition~\ref{prop:V-lip}
and the comparison argument below to the equivalent games of
Remark~\ref{rem:r-one}. Take \((V,H)=(V^-_{\theta,\nu},H^+_{\theta,\nu})\)
or \((V,H)=(V^+_{\theta,\nu},H^-_{\theta,\nu})\), with \((V',H')\)
corresponding to \((\theta',\nu')\). Set
\(\kappa:=L_wL_V(|\theta-\theta'|+W_r(\nu,\nu'))\), with the common
\(L_V\) from \eqref{eq:LV}. To show that \(V'+\kappa/\rho\) is a viscosity
supersolution of \(\rho v-H(x,Dv)=0\), let \(\varphi\in C^1\) touch \(V'+\kappa/\rho\) from
below at \(x_0\); then \(\psi:=\varphi-\kappa/\rho\) touches \(V'\) from below
at \(x_0\), so
\(\rho\psi(x_0)-H'(x_0,D\psi(x_0))\ge0\). Since \(V'\) is
\(L_V\)-Lipschitz (Proposition~\ref{prop:V-lip}), any such test function
satisfies \(|D\psi(x_0)|\le L_V\); hence, by \eqref{eq:H-sensitivity},
\(|H(x_0,D\varphi(x_0))-H'(x_0,D\psi(x_0))|\le\kappa\) (note
\(D\varphi=D\psi\)), and therefore
\[
\begin{aligned}
  \rho\varphi(x_0)-H(x_0,D\varphi(x_0))
  &=\rho\psi(x_0)+\kappa-H(x_0,D\psi(x_0))\\
  &\ge \rho\psi(x_0)-H'(x_0,D\psi(x_0))\ge0.
\end{aligned}
\]
By Theorem~\ref{thm:comparison} (comparison for \(H\)) applied to the
subsolution \(V\) and the supersolution \(V'+\kappa/\rho\),
\(V\le V'+\kappa/\rho\). Exchanging the roles of the two problems gives
\(\|V-V'\|_\infty\le\kappa/\rho\), which is \eqref{eq:V-sensitivity} after
inserting \eqref{eq:LV}.
\end{proof}

\begin{remark}
The values are nondecreasing in the ambiguity radius: since
\(\Dcal_\theta(\nu)\subset\Dcal_{\theta'}(\nu)\) for \(\theta\le\theta'\), the
Hamiltonians \(H^\pm_{\theta,\nu}\), and hence the values
\(V^\pm_{\theta,\nu}\), increase with \(\theta\).
Estimate \eqref{eq:V-sensitivity} bounds this increase linearly. The scalar
experiment illustrates this dependence; sharpness of the constant is not
established. The estimate is a dynamic counterpart of the static sensitivity
results of Bartl et al.~\citep{BartlEtAl}.
\end{remark}

\subsection{Statistical Consistency of the Data-Driven Value}

Let \(\mu_*\in\Pcal_r(\R^\ell)\) be the true law, let
\(\widehat w_1,\ldots,\widehat w_N\) be i.i.d. samples of \(\mu_*\), and let
\(\nu_N\) be the empirical measure \eqref{eq:empirical}. For example, assume
\(\int e^{\gamma|w|^a}\,\mu_*(\dd w)<\infty\) for some \(a>r\) and
\(\gamma>0\). The bounds of Fournier and Guillin~\citep[Thm.~2]{FournierGuillin}, given the
moment-bound constants, yield radii \(\theta_N(\beta)\downarrow0\) for
each \(\beta\in(0,1)\) such that
\begin{equation}\label{eq:concentration}
  \mathbb P\bigl\{W_r(\mu_*,\nu_N)\le\theta_N(\beta)\bigr\}\ge1-\beta.
\end{equation}
Write \(V^\pm_{0,\mu_*}\) for the value of the nominal relaxed problem, i.e.,
the (one-player) optimal control problem with drift \(F(x,u,\mu_*)\), obtained
by taking \(\Dcal=\{\mu_*\}\).

\begin{corollary}
\label{cor:statistical}
Suppose the hypotheses of Theorem~\ref{thm:sensitivity}, including its
affine-reduction alternative, hold with common constants \((L_F,L_c)\), \(\rho>L_F\), uniformly over the family
\(\{\Dcal_{\theta_N(\beta)}(\nu_N)\}_{N\in\N}\cup\{\{\mu_*\}\}\)
and that \eqref{eq:concentration} holds. Choose
\(\theta=\theta_N(\beta)\). For each fixed \(N\), with probability at least
\(1-\beta\):
\begin{enumerate}[label=(\roman*)]
  \item \(\mu_*\in\Dcal_{\theta_N}(\nu_N)\), so the certificate
  \eqref{eq:certificate-alpha}--\eqref{eq:certificate-value} holds; and
  \item the data-driven robust value satisfies the quantitative approximation bound:
  \begin{equation*}
    \bigl\|V^\pm_{\theta_N,\nu_N}-V^\pm_{0,\mu_*}\bigr\|_\infty
    \le \frac{2L_wL_c}{\rho(\rho-L_F)}\,\theta_N(\beta).
  \end{equation*}
\end{enumerate}
\end{corollary}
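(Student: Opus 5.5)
The proof works on the event
\[
  \Omega_N:=\{W_r(\mu_*,\nu_N)\le\theta_N(\beta)\},
\]
which by \eqref{eq:concentration} has probability at least \(1-\beta\). Both assertions are deterministic consequences of earlier results once we restrict to \(\Omega_N\).

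For (i), the definition of the ball gives \(\mu_*\in\Dcal_{\theta_N}(\nu_N)\) on \(\Omega_N\). So the constant path \(b^*\equiv\mu_*\) is admissible, and Proposition~\ref{prop:certificate} applies verbatim with \(\Dcal=\Dcal_{\theta_N}(\nu_N)\).

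For (ii), we apply Theorem~\ref{thm:sensitivity} to the two ambiguity sets \(\Dcal_{\theta_N}(\nu_N)\) and \(\Dcal_0(\mu_*)=\{\mu_*\}\). The zero-radius ball is indeed the singleton, since \(W_r(\mu,\mu_*)=0\) forces \(\mu=\mu_*\). By hypothesis, both sets satisfy Assumptions~\ref{ass:standing} and \ref{ass:lip}, or the affine-reduction alternative, with common constants \((L_F,L_c)\) and \(\rho>L_F\). Estimate \eqref{eq:V-sensitivity} with \(\theta=\theta_N\), \(\theta'=0\), \(\nu=\nu_N\), \(\nu'=\mu_*\) then gives
\[
  \bigl\|V^\pm_{\theta_N,\nu_N}-V^\pm_{0,\mu_*}\bigr\|_\infty
  \le\frac{L_wL_c}{\rho(\rho-L_F)}\bigl(\theta_N+W_r(\nu_N,\mu_*)\bigr).
\]
On \(\Omega_N\) the second summand is at most \(\theta_N(\beta)\), which yields the factor \(2\).

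The only point needing care is the mild bookkeeping around the singleton \(\{\mu_*\}\). It is trivially convex and compact, and it inherits (A1) from \(\mu_*\in\Pcal_r\). Its Hamiltonians \(H^\pm_{0,\mu_*}\) are those of the nominal control problem, so \(V^+_{0,\mu_*}=V^-_{0,\mu_*}\) is the nominal value; this matches the notation used in the statement. The main step I expect to need checking is that the sensitivity theorem allows \(\theta'=0\). It does: Lemma~\ref{lem:hausdorff} covers \(\theta'\ge0\), and the Hamiltonian estimate \eqref{eq:H-sensitivity} uses only Hausdorff distance, not duality, so the \(\theta>0\) restriction of Theorem~\ref{thm:KD} does not enter. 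Measurability of \(\Omega_N\) is implicit in \eqref{eq:concentration}.
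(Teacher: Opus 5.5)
Your proposal is correct and matches the paper's proof. On the event $\{W_r(\mu_*,\nu_N)\le\theta_N(\beta)\}$, part (i) follows from the definition of the ball together with Proposition~\ref{prop:certificate}, and part (ii) follows from \eqref{eq:V-sensitivity} with $(\theta',\nu')=(0,\mu_*)$, using $\theta_N+W_r(\nu_N,\mu_*)\le 2\theta_N$. Your extra checks, that $\Dcal_0(\mu_*)=\{\mu_*\}$, that Lemma~\ref{lem:hausdorff} allows $\theta'=0$, and that duality never enters, are accurate and make explicit what the paper leaves implicit.
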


\begin{proof}
Part (i) is \eqref{eq:concentration} and
Proposition~\ref{prop:certificate}. For (ii), on the event
\(\{W_r(\mu_*,\nu_N)\le\theta_N\}\), apply \eqref{eq:V-sensitivity} with
\((\theta,\nu)=(\theta_N,\nu_N)\) and \((\theta',\nu')=(0,\mu_*)\):
\(|\theta_N-0|+W_r(\nu_N,\mu_*)\le2\theta_N\).
\end{proof}

\section{Numerical Illustrations}\label{sec:experiments}
Code and results are available on GitHub (Data Availability). Errors
are measured on the stated evaluation grids, and rates are empirical.
The convergence theorems concern continuous state and action spaces;
interpolation, finite action sets, and domain truncation introduce additional
errors, assessed here and in Appendix~\ref{app:experiments} through refinement checks.

\subsection{Scalar Reference Model}\label{subsec:scalar-reference}
\begin{example}[A scalar robust control problem]\label{ex:scalar}
Let \(d=\ell=1\), \(W=\R\), \(U=[-1,1]\), \(f(x,u,w)=u+w\), and
\(c(x,u)=g(x)\) with \(g\) bounded and Lipschitz. Let \(\nu\) have mean
\(\bar w\). Since \(W=\R\) is unbounded and \(f\) has linear growth in \(w\)
(\(q_f=1\)), Lemma~\ref{lem:F-regular} applies when \(r>1\)
(Remark~\ref{rem:strict}). For \(r=1\), we instead use the finite-dimensional
reformulation of Remark~\ref{rem:r-one}. The scalar relaxed-game
Hamiltonian and value formulas below remain valid under this reformulation;
it does not extend the sampled-recursion theorem to general \(W_1\)-balls.
Then \(f_0=u\), \(G\equiv1\), and by
\eqref{eq:Hminus-affine} and Proposition~\ref{prop:isaacs} (all hypotheses
hold: \(U\) convex, \(c\) independent of \(u\), \(f\) affine in \(u\), \(G\)
constant),
\[
  H(x,p)=g(x)+\bar wp+\theta|p|+\min_{|u|\le1}up
  =g(x)+\bar wp-(1-\theta)|p|.
\]
For \(\theta<1\), the value \(V\) is therefore the unique \(\BUC\) solution of
\[
  \rho V=g(x)+\bar wV'-(1-\theta)|V'|,
\]
which is the HJB equation of the \emph{one-player} control problem
\(\dot X=\bar w+\tilde u\), \(|\tilde u|\le1-\theta\): Wasserstein ambiguity of
radius \(\theta\) acts exactly as a shrinkage of the control authority from
\(1\) to \(1-\theta\). At \(\theta=1\), the \(|V'|\) term vanishes,
leaving \(\rho V=g+\bar wV'\). For \(\theta>1\), the coefficient of
\(|V'|\) is positive:
\(\rho V=g+\bar wV'+(\theta-1)|V'|\), the HJB equation of a maximizing
disturbance of magnitude \(\theta-1\). The Hamiltonian radius bound in Theorem~\ref{thm:sensitivity} is exact in
this example, since \(\partial_\theta H=|p|\). This does not imply that the
constant in the value-function bound is optimal.
\end{example}

In Example~\ref{ex:scalar}, take \(g(x)=x^2/(1+x^2)\), \(\rho=1\),
and \(\bar w=0\), so \(L_c=\max|g'|=9/(8\sqrt3)\approx0.650\).
The effective control authority vanishes at \(\theta=1\).
For \(\theta\in[0,1)\), writing \(\kappa:=1-\theta\), the value is
\begin{equation}\label{eq:exact-scalar}
  V_\theta(x)
  =\frac1\kappa\int_0^{|x|}e^{-\rho(|x|-y)/\kappa}\,g(y)\,\dd y,
\end{equation}
while \(V_1=g/\rho\) and, for \(\theta>1\),
\(V_\theta(x)=\int_0^\infty e^{-\rho s}g(|x|+(\theta-1)s)\,\dd s\). In each
case a direct computation shows that the formula defines a viscosity
solution of \eqref{eq:HJ-value}, so it is the value by
Theorem~\ref{thm:comparison}.
Numerical evaluations across the transition and implementation details are
given in Appendix~\ref{subsec:exp-scalar}.

\subsection{Sampling Effects and the Common Continuous-Time Limit}
\label{subsec:exp-scheme}

At a fixed step, the sampled recursion retains the effect of disturbance
spread. We first benchmark the relaxed fixed point \(v_\Delta\) of
\eqref{eq:TDelta} against \(V_{0.5}\) in \eqref{eq:exact-scalar}.
The maximum grid error on \([-2,2]\) decreases from
\(1.73\times10^{-1}\) to \(1.01\times10^{-2}\) as
\(\Delta\) decreases from 0.4 to 0.025, with an observed convergence rate of approximately 1.02.
Appendix~\ref{subsec:exp-scalar} and Table~\ref{tab:scheme} specify the grid and boundary values taken from the reference solution.
This observed rate complements the qualitative convergence result of
Theorem~\ref{thm:scheme}.

To isolate sampling effects in Theorem~\ref{thm:scheme-sampled}, let
\(\Dcal\) consist of the laws
\(\mu_{m,s}=\tfrac12\delta_{m-s}+\tfrac12\delta_{m+s}\),
\(m\in\{0,\pm\tfrac14,\pm\tfrac12\}\), and
\(s\in\{0,\tfrac12,1\}\). The fifteen laws have five means and variances
up to one. The relaxed recursion uses only the means, whose support
function agrees with that of \([-\tfrac12,\tfrac12]\); the sampled
recursion also depends on their spreads. Both use node-aligned control
grids and direct maximization over this finite family. Minimization over the continuous control set at steps 0.4, 0.2, and 0.1
found no improvement over the control-grid minimum at the evaluated states.
Figure~\ref{fig:bridge} shows the maximum gap between the computed values on
\([-2,2]\) decreasing from \(1.2\times10^{-1}\) at \(\Delta=0.4\) to
\(6.0\times10^{-3}\) at \(\Delta=0.025\), with an observed convergence rate of approximately 1.08. Both approach the same relaxed-game value \(V\)
at observed rates close to one, consistent with the higher-order local
sampling contribution in this bounded-support example.

\begin{figure}[t]
  \centering
  \includegraphics[width=.55\textwidth]{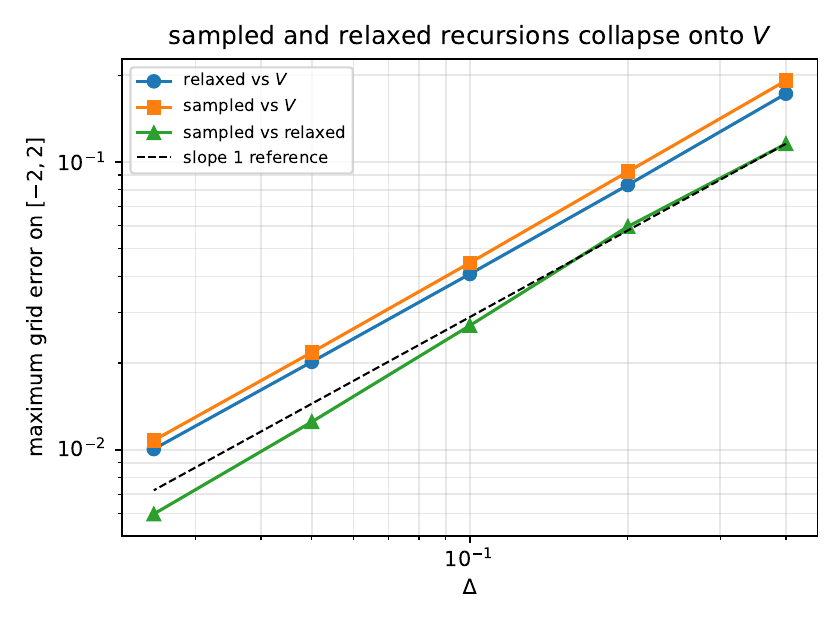}
  \caption{The discrete-to-continuous bridge
  (Theorem~\ref{thm:scheme-sampled}): maximum grid distances between approximations
  to the sampled fixed point \(\widetilde v_\Delta\), the relaxed fixed point
  \(v_\Delta\), and the value \(V\), for a finite ambiguity family with five
  distinct means and variances up to one. All three curves show approximately first-order decay.}
  \label{fig:bridge}
\end{figure}

\subsection{Data-Driven Value Approximation}\label{subsec:stat-summary}
At fixed continuous-time dynamics, we next vary the empirical law.
Conditional median value errors decay approximately as \(N^{-0.54}\)
over the tested sample sizes (Table~\ref{tab:consistency-main}).
For the scalar model, take the true law
\(\mu_*=\tfrac12N(-0.5,0.2^2)+\tfrac12N(0.7,0.3^2)\), with mean \(0.1\),
and use \(W_1\) ambiguity via the affine mean reduction
(Corollary~\ref{cor:mean-set} and Remark~\ref{rem:r-one}).
For each sample size, we generate 1000 empirical laws. The oracle radius
\(\theta_N\) is the empirical 90th percentile of the Wasserstein errors,
evaluated by analytical CDF integration over these replications. The bootstrap
radius \(\widehat\theta_N\) is the empirical 90th percentile
over 200 resamples of each empirical law. These radii are calibrated within the simulation. The finite-sample
guarantee of Corollary~\ref{cor:statistical} applies to radii satisfying
\eqref{eq:concentration}.

Table~\ref{tab:consistency-main} reports the median maximum value difference
on \([-6,6]\cap0.02\mathbb Z\), using replications satisfying
\(\max(\theta_N,\widehat\theta_N)+|\bar w_{\nu_N}|<0.97\).
This condition keeps every recovery speed above \(0.03\), retaining 993
replications at \(N=10\) and all 1000 otherwise. Bootstrap containment in
the final column uses all 1000 draws. The displayed bound is the theoretical
reference on the containment event in Corollary~\ref{cor:statistical}.
Appendix~\ref{subsec:exp-stat} gives the full protocol,
accuracy checks, and exploratory sample-and-hold cost comparisons.

\begin{table}[t]
\caption{Data-driven value approximation: conditional median grid error and
theoretical reference bound; bootstrap containment uses all replications}
\label{tab:consistency-main}
\centering\small
\begin{tabular}{rrrrrr}
\toprule
\(N\)&Retained&\(\theta_N\)&Median error&Bound&Containment\\
\midrule
10&993&0.360&0.159&0.467&81.5\%\\
30&1000&0.207&0.085&0.269&86.8\%\\
100&1000&0.111&0.042&0.144&89.9\%\\
300&1000&0.067&0.025&0.087&88.4\%\\
1000&1000&0.036&0.013&0.047&89.1\%\\
\bottomrule
\end{tabular}
\end{table}

\subsection{A Nonlinear Model: Limits of Uniform Shifts}\label{subsec:exp-nonaffine}
We test nonlinear transport with \(f(x,u,w)=u+\tanh(w)\), retaining
the scalar control set, cost, and discount; \(r=2\). The empirical law has
200 atoms: 140 drawn from \(N(1.5,0.2^2)\) and 60 from
\(N(0,0.15^2)\), using seed 21. Its nominal averaged disturbance is
approximately \(0.627\).

The drift satisfies Lemma~\ref{lem:F-regular} with \(q_f=q_L=0\).
The adversary acts through \(\int\tanh(w)\,\dd\mu\), whose attainable
interval \([m_-(\theta),m_+(\theta)]\) is computed by
Theorem~\ref{thm:KD}. For example,
\[
 m_+(\theta)=\inf_{\lambda\ge0}\left\{\lambda\theta^2+
 \frac1{200}\sum_i\sup_z[\tanh(z)-\lambda(z-\widehat w_i)^2]\right\}.
\]
\begin{figure}[!htbp]
\centering
\includegraphics[width=\textwidth]{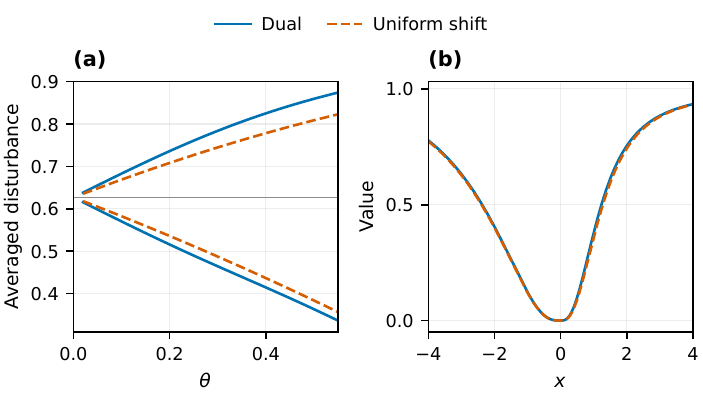}
\caption{Nonlinear model. (a) Uniform shifts give a narrower attainable
averaged-disturbance interval than distributional optimization; the gray
line is the nominal averaged disturbance. (b) At \(\theta=0.3\), uniform
shifts underestimate the robust value.}
\label{fig:nonaffine}
\end{figure}

At \(\theta=0.5\), distributional optimization gives a worst-case
right-side recovery speed of \(0.141\), compared with \(0.191\) for a
uniform shift. The latter overestimates the recovery speed by about 36\%
and understates the robust value. Table~\ref{tab:nonaffine} compares the
dual endpoints with the feasible uniform-shift interval
\([\frac1{200}\sum_i\tanh(\widehat w_i-\theta),
\frac1{200}\sum_i\tanh(\widehat w_i+\theta)]\).
We solve the continuous dual by stationary-point enumeration and scalar
minimization. The signed gaps against a feasible particle calculation are
below \(10^{-10}\) in absolute value.
Appendix~\ref{subsec:nonaffine-procedure} describes the solver and this
floating-point comparison.

\begin{table}[t]
\caption{Nonlinear model: dual and uniform-shift drift intervals, and
maximum value difference on \([-8,8]\cap0.0005\mathbb Z\)}
\label{tab:nonaffine}
\centering\small
\begin{tabular}{rccc}
\toprule
\(\theta\)&Dual interval&Uniform shift&Value difference\\
\midrule
0.1&\([0.570,0.683]\)&\([0.582,0.669]\)&\(6.8\times10^{-3}\)\\
0.2&\([0.516,0.736]\)&\([0.536,0.708]\)&\(1.4\times10^{-2}\)\\
0.3&\([0.464,0.784]\)&\([0.487,0.745]\)&\(2.1\times10^{-2}\)\\
0.5&\([0.363,0.859]\)&\([0.384,0.809]\)&\(2.9\times10^{-2}\)\\
\bottomrule
\end{tabular}
\end{table}

At \(\theta=0.5\), the 140 atoms from the higher-mean component receive approximately 13.0\% of
the squared transport budget for the upper endpoint and 50.5\% for the lower.

\subsection{Inventory Policies under Increasing Ambiguity}
\label{subsec:inventory-summary}
Finally, consider production decisions in the two-product model
\(\dot X=u-w\), with
\(u\in[0,1]^2\), \(w\in\R^2\), discount
\(\rho=1\), and cost
\(c(x,u)=\psi(x_1)+\psi(x_2)+0.25(u_1+u_2)\), where
\(\psi(t)=t_+/(1+t_+)+2t_-/(1+t_-)\).
A \(W_2\)-ball around the empirical demand law reduces to a mean ball
by Corollary~\ref{cor:mean-set}.

In Figure~\ref{fig:inventory}, at \(h=\Delta=0.0125\), increasing the radius
from 0 to 0.4 raises product 1 production at \((0,0.5)\) from 0.5087 to
0.7915, while its full-production band along \(x_2=0.5\) contracts from
\([-2.0375,-0.0125]\) to \([-1.4125,-0.0125]\).
These bands are extracted across the computational domain \([-5,6]^2\).
In deep backlogs, saturating costs reduce the marginal benefit of
replenishment, and discounting downweights later costs. Demand data and
space, time, action, and domain refinements are in
Appendix~\ref{subsec:exp-inventory}.

\begin{figure}[!htbp]
\centering
\includegraphics[width=\textwidth]{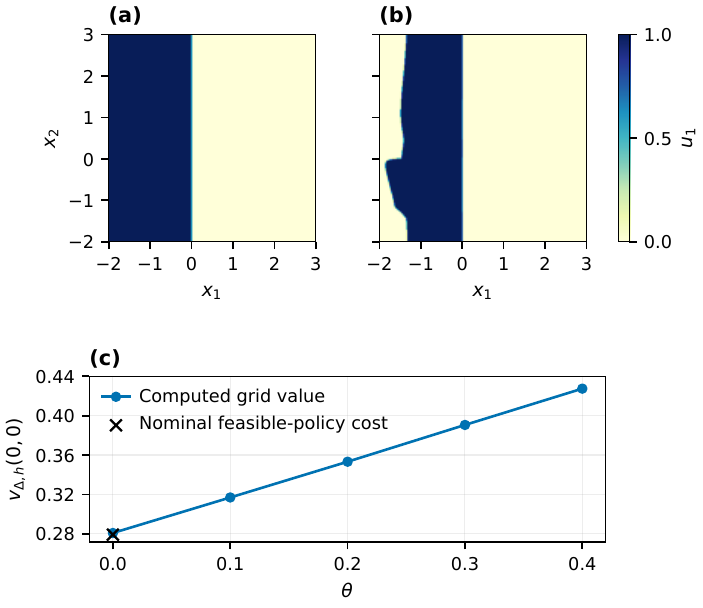}
\caption{Computed inventory policies and values at \(h=\Delta=0.0125\).
(a,b) Production maps for product 1 at radii 0 and 0.4, respectively, show
increased production near balance and a smaller full-production region.
(c) \(v_{\Delta,h}(0,0)\) versus radius, with the nominal
continuous-time feasible-policy cost marked separately.}
\label{fig:inventory}
\end{figure}

\FloatBarrier
\section{Modeling Comparisons}\label{sec:discussion}
\begin{remark}[The Relaxed-Drift Model]\label{rem:modeling}
For fixed action paths, the state equation is deterministic. The adversary
chooses a law in \(\Dcal\) at each time, as in classical relaxed control
\citep{Warga}. The pointwise constraint admits the concatenations used in
DPP; the following subsection considers a cumulative transport budget.
For fixed \(u\), a supremum of a linear expectation over all of
\(\Pcal(W)\) agrees with the supremum over pure disturbances, whenever the
expectations are defined. This identity does not extend through an inner
minimization: for \(U=W=\{-1,1\}\) and payoff \(uw\),
\(\sup_w\inf_u uw=-1\), whereas
\(\sup_{\mu\in\Pcal(W)}\inf_u u\int w\,\dd\mu=0\).
Thus relaxation can change the Hamiltonian and value of the lower game. The
information pattern determines how distributional optimization interacts
with the controller's choice.
\end{remark}
\subsection{Rectangular and Budget Constraints}
Constant paths taking values in \(\Dcal_\theta(\nu)\) satisfy the pointwise constraint.
For a fixed controller strategy, restricting the adversary to these paths
can only decrease the worst-case cost and preserves
Proposition~\ref{prop:certificate}. Comparing optimized values also requires
matching the controller's information.

A discounted transport budget gives the class
\begin{equation*}
 \Bcal_\Theta=\left\{b:[0,\infty)\to\Pcal_r(W)\text{ measurable}:
 \int_0^\infty e^{-\rho s}W_r^r(b(s),\nu)\,\dd s\le\Theta\right\}.
\end{equation*}
A budget game must restrict attention to paths with well-posed state
equations; the standing hypotheses ensure this only for the rectangular
class.
If \(\Theta\ge\theta^r/\rho\), then
\(\Bcal\subseteq\Bcal_\Theta\). For a controller defined on both classes,
its worst-case budget cost is consequently no smaller. On \(W=\R^\ell\),
when \(\theta>0\) and \(0<\Theta<\theta^r/\rho\), neither class contains
the other: a constant translation at distance \(\theta\) violates the
budget, while a translation at distance \(M>\theta\) during a sufficiently
short interval satisfies the budget but violates the pointwise constraint.
At \(\Theta=0\) only the nominal law is admissible a.e.; on restricted
supports the translations used above may be unavailable.

Noncontainment does not preclude an ordering of values in a particular
problem. For example, take no controller, \(f=w\), \(\nu=\delta_0\),
\(r=2\), \(\rho=1\), \(x_0=0\), and \(c(x)=\min(|x|,1)\).
With \(\theta=1\), the admissible path \(b(s)=\delta_1\) has cost
\(1-e^{-1}\). Every path satisfying the budget \(\Theta\) has cost at most
\(\int_0^\infty e^{-s}|\int w\,b(s)(\dd w)|\,\dd s\le\sqrt\Theta\),
by Fubini and Cauchy--Schwarz. At \(\Theta=0.01\), the rectangular value
therefore exceeds the budget value. Comparisons for other budgets depend
on the admissible-path inclusion and the controller's information.

\subsection{Velocity Sets and the Information Pattern}
The set \(K(x,u)=\{F(x,u,\mu):\mu\in\Dcal\}\) is compact and is convex
when \(F\) is affine in \(\mu\) and \(\Dcal\) is convex. For a fixed
measurable control, a relaxed trajectory satisfies
\(\dot X_s\in K(X_s,a(s))\) a.e. Conversely, an absolutely continuous
trajectory satisfying this inclusion admits a measurable law realizing its
velocity: apply measurable selection to the nonempty closed fibers of the
continuous action map, after choosing measurable representatives of
\(a\) and \(\dot X\). The equality of \(H^-\) with
\(\inf_u\{c(x,u)+\sup_{v\in K(x,u)}p\cdot v\}\) is immediate.

The sets \(K(x,u)\) alone do not preserve \(H^+\): they discard the way
one common law couples the velocities at different controls. For
\(U=W=\{-1,1\}\), \(\Dcal=\Pcal(W)\), \(f=uw\), \(c=0\), and
\(p=1\), the original \(H^+\) is zero. Both velocity sets are
\([-1,1]\), but an adversary choosing a control-independent velocity
\(v\in[-1,1]\) would give \(\sup_v\inf_u v=1\).
The mean-vector reduction in Corollary~\ref{cor:mean-set} retains one
common vector action across controls. This is the coupling lost when only
the separate sets \(K(x,u)\) are specified.

\subsection{Scope of the Numerical Conclusions}
The convergence theorems apply before spatial interpolation, domain
truncation, and action discretization. The refinements in
Appendix~\ref{subsec:exp-inventory} examine these additional errors for the
reported computations. A general rate for fully discrete schemes and
closed-loop guarantees for approximate feedback policies require further
analysis.

\subsection{Affine Reductions and Penalties}
\begin{remark}[The Case \(r=1\) and Equivalence of the Games]\label{rem:r-one}
The mean-set identity holds for every \(r\ge1\). Although the mean map
\(M(\mu)=\int w\,\dd\mu\) need not be weakly continuous for \(r=1\),
it is Borel: its coordinates are limits of integrals of bounded continuous
truncations. The map
\(S(z)=(w\mapsto w+z-\bar w_\nu)_\#\nu\) is weakly continuous,
takes \(\bar B(\bar w_\nu,\theta)\) into \(\Dcal_\theta(\nu)\), and
satisfies \(M(S(z))=z\). Applying these maps pointwise to paths preserves
measurability and a.e. equality. Composition therefore projects and lifts
nonanticipative strategies, with identical drifts and costs. More explicitly,
a controller strategy on laws can be restricted along \(S\), and one on
mean paths can be lifted along \(M\); an adversary strategy is projected by
\(M\) and lifted by \(S\). Taking infima and suprema in both directions
proves equality of the corresponding lower and upper values.
The finite-dimensional game therefore applies also at \(r=1\), provided
\((x,u,z)\mapsto f_0(x,u)+G(x,u)z\) and \(c\) satisfy the continuity,
state Lipschitz, growth, and bounded-cost hypotheses. The mean ball supplies
the compact action set. The sampled recursion still depends on the full
increment law and requires the moment conditions in
Theorem~\ref{thm:scheme-sampled}.
\end{remark}
\begin{remark}[Comparison with \(H^\infty\) and risk-sensitive penalties]
\label{rem:hinfty}
Quadratic energy penalties in \(H^\infty\) control \citep{BasarBernhard}
give a smooth correction to the Hamiltonian. For the normalization
\(\sup_e\{p\cdot G(x,u)e-\gamma^2|e|^2/2\}\), this correction is
\(\frac{1}{2\gamma^2}|G(x,u)^{\mathsf T}p|^2\); related
risk-sensitive formulations depend on the model and normalization.
A Wasserstein constraint on the disturbance law yields the positively
homogeneous, non-smooth penalty
\(\theta|G(x,u)^{\mathsf T}p|\) in \eqref{eq:Hminus-affine}. The radius \(\theta\) can be calibrated by measure concentration
(Section~\ref{sec:certificate}); \(\gamma\) specifies the energy penalty. As \(\theta\downarrow0\) the nominal
Hamiltonian is recovered locally uniformly in the state and gradient
variables. Uniformity over the whole state space and bounded gradient sets
holds under the additional, state-independent disturbance Lipschitz bound of
Theorem~\ref{thm:sensitivity}.
\end{remark}
\subsection{Disturbance-Dependent Costs}
\begin{remark}[Disturbance-Dependent Running Costs]\label{rem:cost-mu}
The dynamic programming, viscosity, and comparison arguments also apply to
\(C(x,u,\mu)=\int\tilde c(x,u,w)\,\dd\mu\) if this averaged cost is
bounded, jointly continuous, and uniformly continuous in the state uniformly
over both actions. Sufficient primitive assumptions are joint continuity,
\(|\tilde c(x,u,w)|\le C_0(1+|w|^{q_c})\) with \(q_c<r\), and
\(|\tilde c(x,u,w)-\tilde c(y,u,w)|\le
\omega(|x-y|)(1+|w|^{q_c})\), with \(\omega(h)\to0\).
The Isaacs condition must then be checked for \(C+p\cdot F\).
The dual integrand becomes \(\tilde c+p\cdot f\). A nonlinear disturbance
cost can retain dependence on the full law even when the drift is affine. In the Lipschitz setting of Theorem~\ref{thm:sensitivity},
if \(\tilde c\) is also \(L_{c,w}\)-Lipschitz in \(w\), the corresponding
bounds contain the extra cost term:
\( |H-H'|\le(L_{c,w}+L_w|p|)d\) and
\(\|V-V'\|_\infty\le(L_{c,w}+L_wL_V)d/\rho\), where
\(d=|\theta-\theta'|+W_r(\nu,\nu')\) and the averaged state-cost
Lipschitz constant used to define \(L_V\) is common to both problems.
We retain \(c=c(x,u)\) in all stated results and experiments.
\end{remark}

\section{Conclusions}
The sampled limit covers both orders of optimization and unbounded disturbances
under subcritical drift growth. Joint radius and center stability transfers
distributional estimation bounds to values. Experiments illustrate exact
affine mean reduction, nonlinear transport allocation effects, and contrasting
inventory responses to ambiguity near balance and in deep backlogs. Fully
discrete error rates and value guarantees for implementable feedback controls
remain open under the present assumptions.

\section*{Declarations}
\begingroup
\setlength{\parindent}{0pt}
\setlength{\parskip}{0.5\baselineskip}
\textbf{Funding.} Yeoneung Kim is supported by the National Research
Foundation of Korea (NRF) grants funded by the Korea government (MSIT)
(RS-2023-00219980 and RS-2026-25490291), and by Yonsei University.

\textbf{Competing Interests.} The authors have no relevant financial or
non-financial interests to disclose.

\textbf{Additional Experiments.}
Appendix~\ref{app:experiments} contains additional numerical experiments,
implementation details, and refinement checks.

\textbf{Data Availability.}
The code, synthetic data, and numerical results are publicly available at
\url{https://github.com/yeoneung/dro}. The data can be regenerated using
the supplied code and random seeds.

\textbf{Author Contributions.} Minjung Gim and Hee Jun Yang contributed
equally to this work.

\textbf{AI Assistance.} The authors used OpenAI Codex to assist with code verification, mathematical typesetting, and manuscript editing. The authors have verified all mathematical arguments and take full responsibility for the final manuscript.
\par
\endgroup

\clearpage
\appendix
\numberwithin{equation}{section}
\numberwithin{figure}{section}
\numberwithin{table}{section}
\input{appendix_experiments}
\clearpage
\bibliographystyle{spmpsci}
\begingroup
\normalfont
\let\emph\textup
\let\textit\textup
\urlstyle{same}
\bibliography{ref}
\endgroup

\end{document}

%% file: appendix_experiments.tex
\section{Additional Numerical Experiments}\label{app:experiments}
\suppressfloats[t]
The following sections give reference values, simulation protocols, and
refinement checks for Section~\ref{sec:experiments}.
Reported numerical errors are measured on the specified grids. All
computations use floating-point arithmetic; quadrature refinements and
primal--dual comparisons are numerical checks rather than interval bounds.
The analytic quadrature bound below is stated separately in exact arithmetic.

\subsection{Radius Dependence and Loss of Control Authority}
\label{subsec:exp-scalar}

We use the scalar reference model and value formulas in
Section~\ref{subsec:scalar-reference}. Figure~\ref{fig:scalar}(a) displays
\(V_\theta\) across the transition: for \(\theta<1\) the controller retains
authority \(1-\theta\) and \(V_\theta\) vanishes at the origin; at
\(\theta=1\) the gradient term vanishes and \(V_1=g/\rho\); for
\(\theta>1\) the coefficient of \(|V'|\) becomes positive. Figure~\ref{fig:scalar}(b) reports the maximum difference on 12000
equally spaced points in \([0,25]\). A linear regression over
\(\theta=0.05,0.10,\ldots,0.40\) has slope approximately \(0.31\).
The ratios of maximum difference to radius vary with the radius:
approximately \(0.2665\), \(0.3026\), and \(0.3560\) at
\(0.05\), \(0.4\), and \(0.9\), respectively. Thus linear dependence is
an approximation over the tested interval. The theoretical bound uses
\(L_w=1\), \(L_c=9/(8\sqrt3)\), and the limit \(L_F\downarrow0\);
the experiment does not establish a sharp value-bound constant.

\begin{figure}[t]
  \centering
  \includegraphics[width=\textwidth]{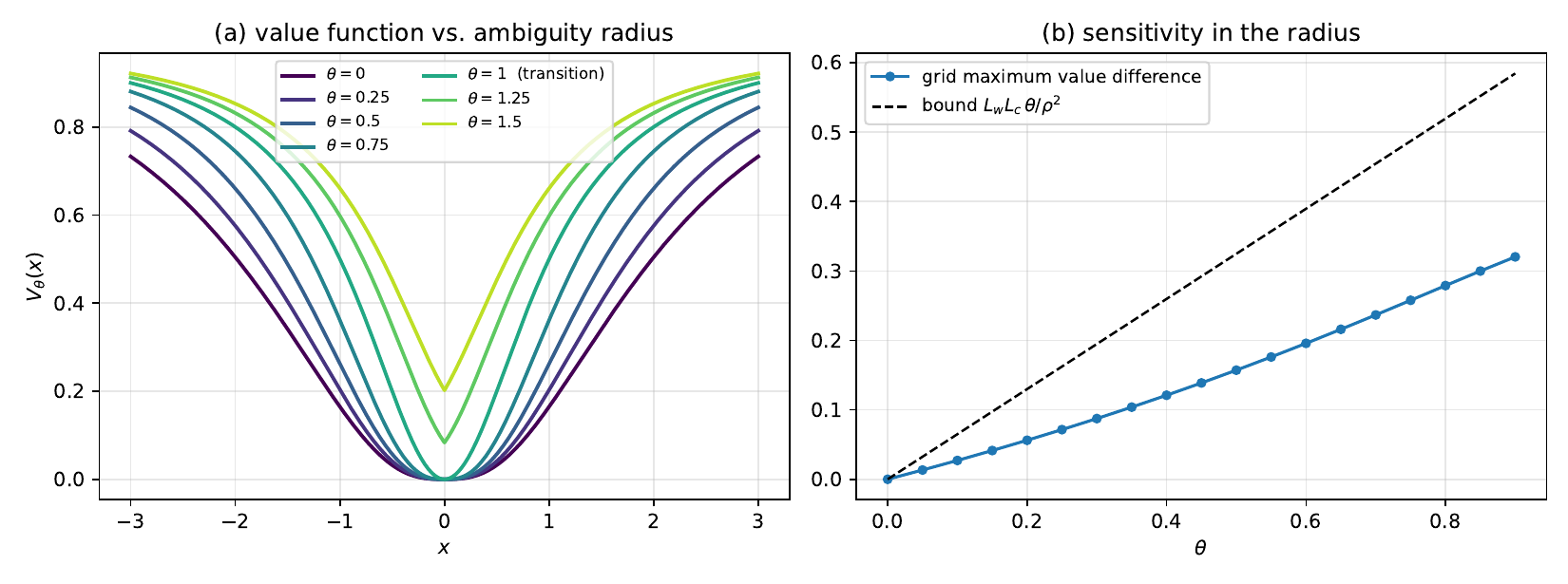}
  \caption{The scalar example of Section~\ref{subsec:exp-scalar}.
  (a)~Numerical evaluations of the value formulas \(V_\theta\) across the transition at
  \(\theta=1\). (b)~Maximum value difference on the evaluation grid versus radius;
  the regression slope over \([0.05,0.40]\) is \(0.31\), below the bound of
  Theorem~\ref{thm:sensitivity} (dashed).}
  \label{fig:scalar}
\end{figure}

\emph{Relaxed-scheme benchmark.}
For the test in Section~\ref{subsec:exp-scheme}, we
compare the fixed point of \eqref{eq:TDelta} at \(\theta=0.5\) with
\eqref{eq:exact-scalar}. Piecewise-linear grid iterates use sliding window extrema over node-aligned
actions, with \(\Delta/h\) and \(\Delta\theta/h\) integers and spatial
spacing \(h\) decreasing quadratically with \(\Delta\).
Outside \([-4,4]\), the reference formula supplies boundary values.
This benchmark measures discretization error with known boundary data;
domain truncation is assessed separately in the inventory experiment.
Table~\ref{tab:scheme} gives the maximum grid errors on \([-2,2]\).
The fitted convergence rate is approximately 1.02, complementing the qualitative
convergence in Theorem~\ref{thm:scheme}.

\begin{table}[t]
  \centering\small
  \caption{Convergence of the implemented relaxed scheme (\(\theta=0.5\)):
  maximum grid error on \([-2,2]\) against the value formula
  \eqref{eq:exact-scalar}. Least-squares convergence rate: 1.02.}
  \label{tab:scheme}
  \begin{tabular}{lccccc}
    \toprule
    \(\Delta\) & 0.4 & 0.2 & 0.1 & 0.05 & 0.025\\
    \midrule
    Grid error
    & \(1.73\times10^{-1}\) & \(8.31\times10^{-2}\) & \(4.08\times10^{-2}\)
    & \(2.02\times10^{-2}\) & \(1.01\times10^{-2}\)\\
    \bottomrule
  \end{tabular}
\end{table}

\begin{figure}[t]
  \centering
  \includegraphics[width=.5\textwidth]{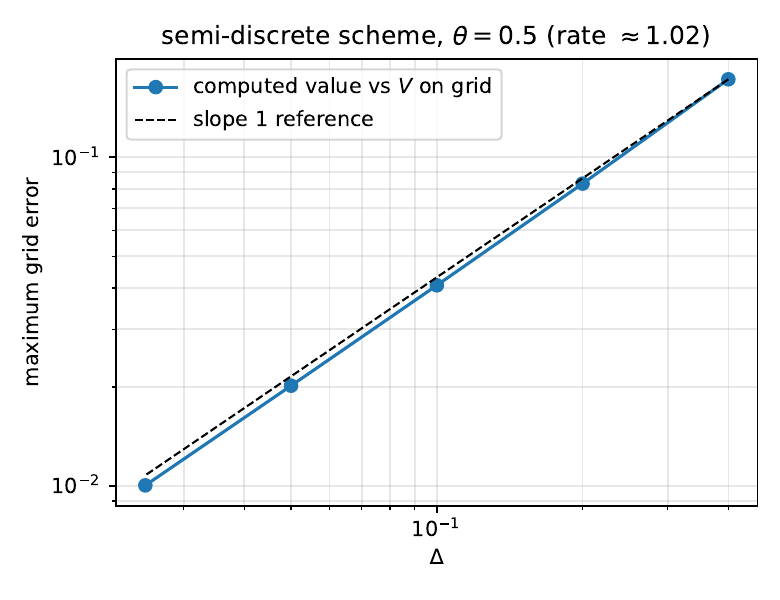}
  \caption{Maximum grid error of the implemented scheme versus \(\Delta\)
  (log--log), with a slope-one reference line.}
  \label{fig:scheme}
\end{figure}
\subsection{Statistical Protocol and Policy-Cost Comparisons}
\label{subsec:exp-stat}
\emph{Sampling and radius calibration.}
The true law is
\(\mu_*=\tfrac12N(-0.5,0.2^2)+\tfrac12N(0.7,0.3^2)\), its mean is
\(m_*=0.1\), and \(r=1\). We use the scalar dynamics \(f=u+w\),
\(U=[-1,1]\), \(g(x)=x^2/(1+x^2)\), \(\rho=1\), and \(x_0=1.5\).
The random seed is 7. For each \(N\in\{10,30,100,300,1000\}\),
1000 empirical laws are drawn, followed by 200 bootstrap resamples for each
draw. The oracle radius \(\theta_N\) is the empirical 90th percentile of
\(W_1(\mu_*,\nu_N)\) across the same replications used for evaluation.
The bootstrap radius \(\widehat\theta_N\) is the empirical 90th percentile
of \(W_1(\nu_N,\nu_N^b)\), evaluated by sorting the equally weighted atoms.
Both radii are simulation calibrations. Corollary~\ref{cor:statistical}
requires the separate concentration guarantee \eqref{eq:concentration}.

\emph{Wasserstein distance evaluation.}
Wasserstein distances to the true law are evaluated by integrating the mixture
cumulative distribution function (CDF) analytically between consecutive empirical atoms, splitting each
interval at its crossing with the empirical probability level. Crossings
are found by bracketed root finding with tolerance \(10^{-13}\), and the
two unbounded tails are included. These distances are used consistently
for oracle calibration and containment evaluation.
An independent numerical integration on \([-10,10]\), split at the atoms
and CDF crossings, in two selected replications at each sample size agrees
within \(2\times10^{-10}\); the discarded mixture tails contribute less
than \(10^{-200}\). A CDF grid approximation on \([-3,4]\) with spacing \(0.002\) differs
by up to \(5.51\times10^{-4}\) across all draws and is saved as a diagnostic.
The reported calibration and containment results use the CDF integrals.

\emph{Evaluation population.}
Coverage frequencies and bootstrap-radius summaries use all 1000 draws.
Value and policy-cost comparisons are restricted to
\[
 \max(\theta_N,\widehat\theta_N)+|\bar w_{\nu_N}|<0.97,
\]
so every one-sided recovery speed used in the value formula exceeds
\(0.03\). At \(N=10\), six draws fail the oracle condition and one
additional draw fails the bootstrap condition, leaving 993. All draws are
retained at the other sample sizes. Speeds are used without clipping.
All value and exceedance summaries below are conditional on this population.

\emph{Value calculation.}
For a mean \(\bar w\) and radius \(\theta\), define
\(k_R=1-\bar w-\theta\) and \(k_L=1+\bar w-\theta\).
On each half-line the value formula is \eqref{eq:exact-scalar}, with
\(\kappa\) replaced by the corresponding speed. Using distance from the
origin as the independent variable, we integrate \(kV'=g-\rho V\),
\(V(0)=0\), by fourth-order Runge--Kutta with step \(0.001\). Function errors are maximized over
\([-6,6]\cap0.02\mathbb Z\). All recorded errors on the retained draws
lie below the reference \(2L_c\theta_N/\rho^2\). For exact game values,
this bound is guaranteed on the containment event.

At \(x_0=1.5\), an independent 256-node Gauss--Legendre evaluation of
the discounted trajectory integral differs from the Runge--Kutta values
by at most \(4.1\times10^{-12}\) across the retained nominal, oracle,
and bootstrap cases. Refining that quadrature from 128 to 256 nodes changes
the result by at most \(1.12\times10^{-13}\).

\emph{Implemented policy and cost.}
At times \(t_k=k\delta\), \(\delta=0.001\), the control is set to
\(-1\) if \(X_{t_k}>0.01\), to \(1\) if \(X_{t_k}<-0.01\), and to
\(\operatorname{clip}(-\bar w_{\nu_N},-1,1)\) otherwise. It is held
constant on \([t_k,t_{k+1})\). The true relaxed trajectory obeys
\(\dot X=u+m_*\); its update over a holding interval is exact.
The discounted cost is integrated on each holding interval by four-node
Gauss--Legendre quadrature to time 30. The same policy is used for the
nominal, oracle, and bootstrap value comparisons in each replication.
Its suboptimality for these games has not been bounded.

The cost quadrature is checked on the identical state trajectories using
two nodes per interval. Across the retained replications, the largest
change is \(3.01\times10^{-13}\); the omitted discounted tail is at most
\(e^{-30}<9.36\times10^{-14}\). In exact arithmetic, the four-node
quadrature error has the bound
\[
 \frac{\delta^9(4!)^4}{9(8!)^3(1-e^{-\delta})}
  \sum_{j=0}^8 \binom{8}{j}(1+|m_*|)^j j!
 <1.3\times10^{-28}.
\]
Here \(|g^{(j)}|\le j!\) for \(j\ge1\), \(|g|\le1\), and
\(|\dot X|\le1+|m_*|\), so the sum bounds the eighth derivative of the
locally discounted integrand. This analytic estimate excludes rounding
error. Left-rectangle costs differ by up to \(3.47\times10^{-4}\) and are
saved as diagnostics.

\emph{Exceedance definition.}
We count \(J_{\rm num}>V_{\rm num}+\varepsilon_{\rm tol}\), with
\(\varepsilon_{\rm tol}=0.003\), and also report zero-tolerance results
to show sensitivity to the reporting threshold. This tolerance is not an
error bound for the policy or value approximation. The resulting rates
compare the specified policy cost with three value benchmarks; they do not
validate a feedback certificate.

\begin{table}[t]
\caption{Conditional cost exceedance percentages, at tolerances zero and
0.003; the retained populations are those in Section~\ref{subsec:stat-summary}}
\label{tab:consistency}
\centering\small
\begin{tabular}{rrrrrrr}
\toprule
&\multicolumn{2}{c}{Oracle}&\multicolumn{2}{c}{Nominal}
&\multicolumn{2}{c}{Bootstrap}\\
\(N\)&0&0.003&0&0.003&0&0.003\\
\midrule
10&3.8&3.3&51.0&47.7&7.9&7.2\\
30&3.4&3.0&48.6&44.3&5.5&4.4\\
100&3.2&2.6&46.5&39.1&3.3&2.8\\
300&4.0&1.4&51.5&38.8&5.4&1.8\\
1000&4.6&1.1&51.1&28.3&5.1&1.2\\
\bottomrule
\end{tabular}
\end{table}

The median bootstrap radii are \(0.324,0.199,0.113,0.065,0.036\).
Their containment frequencies over all replications are
81.5\%, 86.8\%, 89.9\%, 88.4\%, and 89.1\%, respectively, below the
nominal 90\% in these runs. Figure~\ref{fig:consistency} shows decreasing
value error with sample size and lower conditional exceedance against the
oracle benchmark than against the nominal value.

\begin{figure}[t]
\centering
\includegraphics[width=\textwidth]{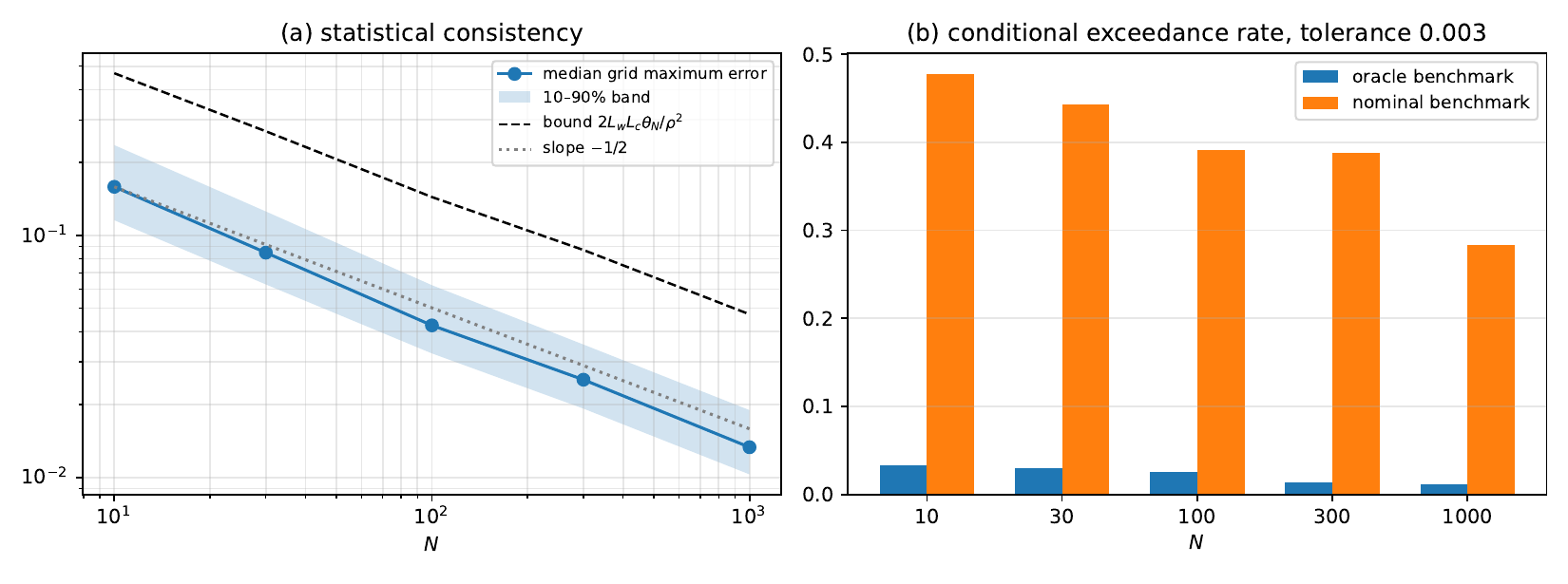}
\caption{Conditional grid value error and cost exceedance at tolerance
0.003; the oracle radius uses the same Monte Carlo replications for
calibration and evaluation}
\label{fig:consistency}
\end{figure}

\subsection{Inventory Policies under Increasing Ambiguity}
\label{subsec:exp-inventory}

This section specifies the model and numerical checks behind
Figure~\ref{fig:inventory}. Consider two products with inventory levels \(x\in\R^2\), production rates
\(u\in[0,1]^2\), demand \(w\in\R^2\), and dynamics \(\dot X=u-w\).
The running cost is
\(c(x,u)=\psi(x_1)+\psi(x_2)+0.25(u_1+u_2)\), where
\(\psi(t)=t_+/(1+t_+)+2t_-/(1+t_-)\), and \(\rho=1\).
Using seed 11, we draw 200 vectors from the Gaussian with mean
\((0.5,0.6)\) and covariance
\(\left(\begin{smallmatrix}0.02&0.008\\0.008&0.03\end{smallmatrix}\right)\),
and clip negative coordinates to zero. Their empirical mean is
\(\bar w_\nu=(0.50865308,0.60737428)\). The \(W_2\)-ball of radius
\(\theta\) around this empirical law acts through the exact mean ball
\(\bar B(\bar w_\nu,\theta)\), by Corollary~\ref{cor:mean-set}.

\emph{Discretization.}
We approximate \eqref{eq:TDelta} on \([-5,6]^2\) using bilinear
interpolation and constant extension at the computational boundary. The
reported array is denoted by \(v_{\Delta,h}\). For each control coordinate
we use \(\{0,0.25,0.5,0.75,1\}\), augmented by
\(\bar w_{\nu,i}\) and \(\min(1,\bar w_{\nu,i}+\theta/\sqrt2)\).
The product grid has 36 actions at \(\theta=0\) and 49 at the positive
radii tested. In particular, it contains the nominal equilibrium control.
The adversary uses the center and 32 equally spaced angles on each of the
circles of radii \(\theta/2\) and \(\theta\), giving 65 actions when
\(\theta>0\). Including both interior and boundary actions avoids assuming
that the interpolated value is maximized on the sphere.

The update retains the stage cost \(\Delta c\) and discount
\(e^{-\Delta}\) of \eqref{eq:TDelta}. A C++ kernel accelerates
Gauss--Seidel iteration by eliminating self-transition terms. The final
residual is evaluated for the original simultaneous Bellman update;
dividing it by \(1-e^{-\Delta}\) gives a finite-grid fixed-point error
bound below \(10^{-8}\) in every reported run, apart from floating-point
rounding. Independent NumPy and SciPy interpolation checks verify the
update. The residual measures the error in solving the finite-grid equation;
space, action, and domain effects are examined by refinement below.

\emph{Value accuracy.}
For \(\theta=0\) and \(x=0\), the admissible constant control
\(u=\bar w_\nu\) keeps the state at zero. Consequently,
\[
 V_0(0,0)\le J_{\rm hold}=0.25(\bar w_{\nu,1}+\bar w_{\nu,2})
 \approx0.27900684.
\]
The same policy has discrete cost
\(J_{{\rm hold},\Delta}=\Delta J_{\rm hold}/(1-e^{-\Delta})\).
The computed nominal origin values agree with this expression within
\(2\times10^{-8}\); its excess over the continuous-time policy cost is
\(0.0070333,0.0035021,0.0017474\) at the three time steps in
Table~\ref{tab:inventory-refinement}. This difference isolates the stage-cost
bias at a stationary state. The comparison uses a feasible-policy upper
bound for \(V_0\), without assuming that the holding policy is optimal.

\begin{table}[t]
\caption{Inventory origin values \(v_{\Delta,h}(0,0)\) with fixed action
sets and joint space--time refinement. These are computed grid values.}
\label{tab:inventory-refinement}
\centering\small
\begin{tabular}{rrrrrr}
\toprule
\(h=\Delta\)&\(\theta=0\)&0.1&0.2&0.3&0.4\\
\midrule
0.05&0.286040&0.327623&0.370569&0.411867&0.449786\\
0.025&0.282509&0.319939&0.358226&0.396554&0.435361\\
0.0125&0.280754&0.316853&0.353208&0.390576&0.427481\\
\bottomrule
\end{tabular}
\end{table}

The largest changes from \(h=\Delta=0.05\) and \(0.025\) to the
\(0.0125\) array, maximized over the five radii and common grid nodes in
\([-2,2.5]^2\), are \(0.0561\) and \(0.0187\), respectively.
At \(h=\Delta=0.0125\), augmenting a \(9\times9\) control grid by the
same balance actions and using 64 angles on four nonzero radii gives
100 controls and one adversarial action at \(\theta=0\), and
121 controls and 257 adversarial actions at \(\theta=0.4\).
The nominal value is unchanged; at \(\theta=0.4\) the origin value changes
from 0.427481 to 0.426218, and the maximum grid change is 0.002402.
The two reported production bands remain unchanged under this action
refinement. At \(h=\Delta=0.025\), enlarging the domain to
\([-10,12]^2\) changes values in the evaluation window by at most
0.000212 and 0.002122 at radii 0 and 0.4, respectively; the origin values
are unchanged to the displayed precision.
A separate check of the fine array at 529 states and 49 controls uses
128 angles on eight nonzero radii plus the center. The largest one-step
Bellman increase over the reported adversarial grid is
\(3.51\times10^{-5}\). Among the tested actions, interior points do not
improve on the refined boundary and center. These comparisons measure
sensitivity to the numerical choices; a global error estimate for the
continuous-game value remains unavailable.

\emph{Duality check.}
At 30 states selected with seed 12 from the evaluation window, with
\(p\) the centered numerical gradient of the fine-grid value at
\(\theta=0.4\), the closed-form Hamiltonian penalty and the dual minimized
over 4001 logarithmically spaced multipliers in \([10^{-3},10^3]\) agree
within \(6.9\times10^{-7}\) absolute and \(2.3\times10^{-6}\) relative
error. The minimizing multiplier agrees with
\(\lambda^*=|G^{\mathsf T}p|/(2\theta)\) to the multiplier-grid
resolution. A separate two-dimensional grid evaluation of the inner
supremum at five source atoms agrees with its analytic form within
\(2.6\times10^{-5}\). These tests concern the Hamiltonian duality formula.

\emph{Sensitivity and policy.}
On the fine evaluation grid in \([-2,2.5]^2\), the largest value
increases from radius zero are \(0.1114,0.2316,0.3621,0.5002\) for
\(\theta=0.1,0.2,0.3,0.4\). They are approximately linear and below
the corresponding theoretical reference bounds in Theorem~\ref{thm:sensitivity}.
Figure~\ref{fig:inventory} uses \(h=\Delta=0.0125\).
Along \(x_2=0.5\) across the full computational domain, the computed
full-production band changes from
\([-2.0375,-0.0125]\) at radius zero to
\([-1.4125,-0.0125]\) at radius 0.4; the full-production area fraction
changes from 0.4432 to 0.3329 on the evaluation grid, including its boundary nodes.
At \((0,0.5)\), production of the first product increases from 0.5087
to 0.7915. The upper endpoint of the full-production band stays fixed
at this resolution, while the lower endpoint moves toward balance.
Production therefore increases near balance and retreats from full
production in deeper backlogs, where the saturating cost has a smaller
marginal response to replenishment.

\subsection{Nonlinear Model: Numerical Procedure}
\label{subsec:nonaffine-procedure}
For the \(\tanh\) model in Section~\ref{subsec:exp-nonaffine}, the dual
reduces to scalar maximizations and a one-dimensional multiplier search.
For \(\lambda>0\), each scalar maximizer lies in
\([\widehat w_i,\widehat w_i+1/(2\lambda)]\).
We split this interval at the zeros of the second derivative, locate every
stationary point by bracketed root finding, and compare the candidate values.
For the lower endpoint the same procedure is applied after reflecting the
atoms. The convex scalar dual is minimized over the compact interval
\([0,(1-\overline{\phi})/\theta^2]\), where
\(\phi=\tanh\) or \(-\tanh\) and
\(\overline{\phi}=\frac1{200}\sum_i\phi(\widehat w_i)\).
This interval follows from
\(D(\lambda)\ge\lambda\theta^2+\overline{\phi}\) and \(D(0)=1\).
Root and multiplier tolerances are \(10^{-13}\) and \(10^{-11}\).
Stationary-point enumeration searches the continuous inner problem.
The value curves in Figure~\ref{fig:nonaffine} are computed by
fourth-order Runge--Kutta with spatial step \(0.0005\).

A projected particle calculation supplies feasible distributions, with
separately recorded signed gaps below \(10^{-10}\) in absolute value.
This comparison uses one transported particle per source atom. For a
general integrand, splitting an empirical atom can be necessary
\citep[Cor.~2]{GaoKleywegt}, so the particle calculation alone does not
establish primal optimality.